\newtheorem{defin}{Definition}[section]
\newtheorem{prop}[defin]{Proposition}
\newtheorem{theorem}[defin]{Theorem}
\newtheorem{coroll}[defin]{Corollary}
\newtheorem{lemma}[defin]{Lemma}
\newtheorem{rem}[defin]{Remark}
\newtheorem{assumption}[defin]{Assumption}
\begin{document}

\begin{frontmatter}

\title{A Feynman-Kac result via Markov BSDEs with generalised drivers}

\runtitle{A Feynman-Kac result via Markov BSDEs with generalised drivers}

\begin{aug}
  \author{\fnms{Elena}  \snm{Issoglio}\thanksref{a}\ead[label=e1]{e.issoglio@leeds.ac.uk}}
  \and
  \author{\fnms{Francesco} \snm{Russo}\thanksref{b,e2}\ead[label=e2,mark]{francesco.russo@ensta-paris.fr}}

\address[a]{Department of Mathematics, University of Leeds, Leeds, LS2 9JT, UK.
\printead{e1}}
\address[b]{ENSTA Paris, 
Institut Polytechnique de Paris, Unit\'e de Math\'ematiques appliqu\'ees,  828, boulevard des Mar\'echaux, F-91120 Palaiseau, France.
\printead{e2}}

\runauthor{E. Issoglio and F. Russo}

\affiliation{University of Leeds and ENSTA Paris}

\end{aug}

\begin{abstract}
 In this paper we investigate BSDEs  where the driver contains a distributional term (in the sense of generalised functions) and derive   general Feynman-Kac formulae related to these BSDEs. We introduce an integral operator to  give  sense to the equation and then we show the existence of a 
strong solution employing results on a related PDE.
Due to the irregularity of the driver, the $Y$-component of a couple $(Y,Z)$ solving the BSDE is not necessarily a semimartingale but a weak Dirichlet process. 
\end{abstract}

\begin{keyword}
\kwd{Backward stochastic differential
equations (BSDEs)}
\kwd{distributional driver} 
\kwd{generalised and rough coefficients}
\kwd{Feynman-Kac formula}
\kwd{pointwise product} 
\kwd{weak Dirichlet process} 
\end{keyword}



\end{frontmatter}

\section{Introduction}\label{sc: introduction}

In this paper we consider Markov backward stochastic differential equations (BSDEs) where the driver is a generalised function  (Schwartz distribution), and investigate Feynman-Kac type formulae in this general setting.  
The classical notion of Brownian BSDE was introduced  in 1990 by E.\ Pardoux and
S.\ Peng in \cite{PardouxPeng90}, after an early work
of J.\ M.\ Bismut in 1973 in \cite{bismut}. It is  a  stochastic differential
equation with prescribed  terminal condition 
$ \xi$ and  driver $\hat f$ expressed by
\begin{equation}\label{BSDEIntroN}
Y_t = \xi + \int_t^T \hat f\left(r,\omega,Y_r,Z_r\right)\mathrm dr  -\int_t^T Z_r \mathrm dW_r.
\end{equation} 
 The unknown is a couple $(Y,Z)$ of adapted processes. 
Existence and uniqueness of the solution for the above equation was established first
supposing (essentially)  only Lipschitz conditions on the driver 
$ \hat f$ with respect to the $y$ and $z$ variables.
 In subsequent works those conditions were
considerably relaxed, see \cite{PardouxRascanu}
 and references therein for recent contributions on the topic.
Ever since the earliest papers, the field of  BSDEs has attracted the interest of a wide number of mathematicians. This is due to the fact that BSDEs turned out to be powerful tools that allowed new and unexpected applications.

Of particular interest
 is the case where the randomness of the
   driver in \eqref{BSDEIntroN}
  is expressed through a forward diffusion process $X$
  and the terminal condition only depends on $X_T$.
We denominate this situation as the {\it Markov case}.
In the present paper we consider the Markov case where the randomness of the driver $\hat f$ depends only on the Brownian motion $W(\omega)$. The key novelty is that $\hat f$ has a linear part in $Z$ of the form $ Z_r b(r,W_r(\omega))$ where $b$ is a suitable generalised function. In particular, we consider BSDEs  of the form
\begin{equation}\label{eq: BSDE classical form}
Y_t = \xi  +\int_t^TZ_r  b(r, W_r) \mathrm d r  + \int_t^T f(r, W_r, Y_r, Z_r)  \mathrm d r -  \int_t^T Z_r \mathrm d W_r.
 \end{equation}
We are interested in  a class of coefficients $b$ of distributional type, namely  
$$b\in C([0,T]; H^{-\beta}_{q} (\mathbb R^d; \mathbb R^d)),$$  for some $\beta\in (0, 1/2)$.  
The objects appearing in \eqref{eq: BSDE classical form} take values in the following sets: $t\in [0,T]$, $\xi, W, Y \in\mathbb R^d$, $Z \in\mathbb R^{d\times d}$ and $f(t, W, Y , Z) \in \mathbb R^d$ (all vectors being column vectors). Here $\xi= \Phi(W_T)$ for some deterministic function $\Phi$. As an example of generalised function  $b$ which is allowed here,  one can think of  the derivative of a H\"older continuous function with H\"older parameter larger than $\frac{1}{2}$ (plus some growth condition at infinity).

Our motivation for looking at these very irregular coefficients comes both from applications and from theoretical issues. Indeed, BSDEs like \eqref{eq: BSDE classical form} and variations of those equations with the same low regularity of coefficients, arise from vastly different contexts from  pricing and hedging problems, to  stochastic control, to probabilistic representation of PDEs.  Below we illustrate some examples of applications of the BSDE \eqref{eq: BSDE classical form} with distributional driver.  
\begin{itemize}
\item BSDEs intervene classically in financial modelling, see e.g.\
\cite{ElKaroui-et.al.97}.
If $\xi$ is a contingent claim based
on some asset price $X$ (already discounted),
then the  {\it price} and the {\it self-financing strategy}
at time $t$ are provided by the couple $(Y_t,Z_t)$
which fulfills
\begin{equation}\label{eq: E2}
Y_t = \xi - \int_t^T Z_r \mathrm dX_r.
\end{equation}
{An interesting case concerns the hedging problem when the underlying $X$ is not a semimartingale, even though  Delbaen \& Schachermayer's fundamental theory imposes that $X$ is a semimartingale if no arbitrage is to be excluded.
However, these no-arbitrage issues can be solved by imposing extra constraints on the class
of admissible strategies. For example, \cite{cheridito03} considered a model driven by fractional Brownian motion (which is not a semimartingale): there arbitrage was prevented by not allowing continuous trading.
In that context, the integral in \eqref{eq: E2} obviously exists
because the strategy processes are of bounded variation.
However in general, a fundamental issue is that  the integral in \eqref{eq: E2} has to be suitably defined. For instance in  \cite{crnsm2}, where $X$ is a finite quadratic variation process (but non necessarily a semimartingale), the integral in \eqref{eq: E2} is a forward integral, and no-arbitrage is guaranteed by appropriately restricting the class of admissible strategies. 
}
Suppose now that the asset price is modelled by the {\it rough process}
$$ X_t = W_t - \int_0^t b(s,W_s)\mathrm ds, $$
where $b(s,\cdot)$ is a Schwartz distribution. Then in this case \eqref{eq: E2} reduces to  BSDE \eqref{eq: BSDE classical form} with $f\equiv 0$, that is 
\begin{equation*}
Y_t = \xi - \int_t^T Z_r\mathrm dW_r + \int_t^T Z_r b(r,W_r) \mathrm dr.
\end{equation*}
Note that the latter integral has still to be defined. 
\item BSDEs are also powerful tools that help to solve stochastic control problems. For example, suppose that  $X$ follows a  stochastic controlled dynamics
\begin{equation}\label{eq:SDEstoch_contr} 
dX_t = \mu(t, X_t, \alpha_t)\mathrm dt + \sigma(t, X_t, \alpha_t) \mathrm dW_t,
\end{equation}
where $\alpha$ is the control process that acts on the drift and the volatility.  Let $d=1$ for simplicity. Suppose that we are interested in maximising the functional
$
J(\alpha)= \mathbb E[\Phi(X_T)]
$
as a function of the control $\alpha$. It is known that this stochastic control problem can be solved with the help of the stochastic maximum principle (Pontryagin maximum principle), see e.g.\ \cite[Section 6.4.2]{pham}. In this setting, one needs to solve
 a BSDE,  called  \emph{adjoint equation}, where the driver is the derivative of the Hamiltonian $ \mathcal H (t,x,a,y,z) := \mu(t,x,a) y + \sigma(t, x, a) z$, that is, the BSDE takes the form
\begin{equation}\label{eq: BSDE stoch control}
-\mathrm dY_t = \mathcal D_x \mathcal H(t, X_t, \alpha_t, Y_t, Z_t) \mathrm dt -Z_t \mathrm dW_t,
\end{equation}
with terminal condition $Y_T = \mathcal D_x \Phi(X_T)$. Here $\mathcal D_x \mathcal H$ denotes the derivative of $\mathcal H$ with respect to the variable $x$.
 It is clear that if $x\mapsto \sigma(t,x,a)$   is a continuous function  which is not differentiable, then the driver of the BSDE will contain some {\em singular} elements.
 More specifically,
 consider for instance the case when $\sigma(t,x,a) = \sigma_0(t,x)\sigma_1(a) $ where $\sigma_1(\cdot)$ is  bounded 
 and
$  \sigma_0(t, \cdot)\in H^s_p(\mathbb R)$ where $s<1$. Then $ \mathcal D_x \sigma_0(t, \cdot)
 \in H^{s-1}_p(\mathbb R)$ and $s-1<0$, that is a generalised function like $b$ in \eqref{eq: BSDE classical form}.
Indeed, in this case we recover a BSDE where there is  a rough part linear in $z$, namely $ \mathcal D_x  \sigma(t,x,\alpha(t,x)) z  = : b(t,x)  z$, much like \eqref{eq: BSDE classical form} with $-\beta=s-1$. 
{We remark that any $s'$-H\"older continuous function $\sigma_0(t, \cdot)$ with compact support belongs to the fractional Sobolev space $H^s_p(\mathbb R)$ for any $s<s'$ and $p\geq2$, see e.g. \cite[Proposition 4.1]{issoglio13}. 
Moreover, if the diffusion coefficient  is $s'$-H\"older continuous with $s'\geq\tfrac12$ and if the drift $\mu$ is Lipschitz, one can show pathwise uniqueness of a solution $X$ to \eqref{eq:SDEstoch_contr} for every given control $\alpha$ following the proof of \cite[Proposition 2.13, Ch 5]{karatzasShreve}.
 In finance, }
such kind of non-smooth volatility $\sigma$
 can be obtained if one looks for example at CIR models with {\em uncertain volatility}, where  $\mu(t,x,a) = bx+c$ and   $\sigma(t,x,a)= \sqrt x a$. Here the control $a$ is a scaling parameter that represents the uncertainty of the volatility and varies between two given values $a_1, a_2$. 
\item As we mentioned earlier, another main application of BSDEs is their use in providing  probabilistic representations to the solution of certain non-linear PDEs. It is known  (at least in the classical case) that when $\xi= \Phi(W_T)$, then BSDE \eqref{eq: BSDE classical form} is linked to a PDE of the form 
\begin{equation}\label{eq: PDE intro}
\left\{\begin{array}{l}
\partial_t u +\frac12 \Delta u ={-\nabla u^* \, b- f(\cdot, u, \nabla u)}\\
  u(T) = \Phi,
\end{array}
\right.
\end{equation}
see Section \ref{sc: preliminaries} for details about the notation. 
If $u$ is the solution of the PDE, then $ Y_t:= u(t,W_t) $ and $ Z_t:= \nabla u^*(t, W_t)$ is a solution to the  BSDE \eqref{eq: BSDE classical form}.  We emphasize that $(Y,Z)$ is a  strong solution to the BSDE related to the Brownian filtration related to $W$, which is then used to represent the solution $u$ to the PDE via non-linear Feynman-Kac type formulae.  Note that  if we were to work with SDEs with distributional coefficients, we would have representation of the (linear) PDE via weak solutions and not strong solutions, because in this case the solution to the SDE is weak, see \cite{flandoli_et.al14}. 
\end{itemize}

Motivated by these examples, we study   BSDE \eqref{eq: BSDE classical form} and the Feynman-Kac representation of the solution to   PDE \eqref{eq: PDE intro} from a theoretical perspective. 
PDEs with distributional coefficients appear naturally as Fokker-Planck
type equations for diffusions in irregular medium or polymers,
see e.g. \cite{mathieu, seignourel, diel}.
The topic of stochastic differential equations involving distributional coefficients has attracted a lot of interest, in particular for (forward) SDEs. See for example  \cite{ew, frw1, frw2} in the case where the solution is not a semimartingale. See also \cite{russo_trutnau07} and more recently \cite{flandoli_et.al14, diel}. 
 For what concerns the case of backward SDEs involving a distribution  we mention the  works \cite{erraoui-ouknine-sbi98}  on (reflected) BSDEs with distribution as terminal condition, and \cite{russo_wurzer17} whose authors
 studied a one-dimensional BSDE (with random terminal time)
 involving distributional coefficients via a forward stochastic process. 
In \cite{diehl-zhang17} they considered BSDEs where the driver is a Young integral.
Recently \cite{issoglio_jing16, barrasso2} studied Markov BSDEs with special forward process with distributional drift, using different techniques than ours.

In this paper we make a substantial step towards a deeper understanding of backward equations with distributional drivers and their link to rough non-linear PDEs expressed via Feynman-Kac type formulae. 
It is worth noticing that even though one expects that BSDE \eqref{BSDEIntroN} is somehow equivalent to   PDE \eqref{eq: PDE intro}, this is a priori not clear 
 in the singular case when $b$ is a distribution. We  rigorously prove this fact in the present paper.
Our idea is to give an intrinsic meaning to the distributional term $Z_rb(r,W_r)$ in order to define and  solve the BSDE. We start by introducing an integral operator $A^{Y,W}$ (see Definitions \ref{def: operator A YW in C} and \ref{def: operator A YW in H general}) that will provide a proper mathematical meaning to the term $\int_t^T Z_rb(r,W_r) \mathrm dr $ when evaluated in $b$. This operator is defined in terms of the Brownian motion $W$ and a process $Y$. 
{In the special case when $Y=W$, $A^{W,W}$ will be denominated
  as the {\it  occupation time operator}, since $A^{W,W}(g)$ can be linked to the occupation time formula,  see Remark \ref{rem:ustunel}.  
}
 Using the integral operator $A^{Y,W}$ we introduce an equivalent formulation of the BSDE  \eqref{BSDEIntroN} (see Definition \ref{def: solution BSDE integral A}) and show that it extends  the classical notion of solution from Pardoux-Peng, see Proposition \ref{pr: equivalence of integral BSDE formulation}. In Proposition \ref{pr: continuity of A WW in C} we show that the occupation time operator $A^{W,W}$  is well-defined for $b$s in a specific class of distributions, namely in the fractional Sobolev space $H^{-\beta}_q$ where the parameters satisfy Assumption \ref{ass: parameters}.
In  Proposition \ref{pr: chain rule in H} we  show a chain rule for $\phi(t, W_t)$ for a certain class of  $\phi\in C^{0,1}$ (related to the heat equation \eqref{eq: PDE for phi}), and  the remainder in the chain rule is expressed in terms of the occupation time operator $A^{W,W}$. 
Our main results  are Theorem \ref{thm: Markovian BSDE - existence of sol}, where we prove  the existence of a solution to the BSDE \eqref{eq: BSDE integral A} in the Markovian framework given in terms of the solution of PDE \eqref{eq: PDE intro}, and Corollary \ref{cor: Feynman-Kac repr}, which is the Feynman-Kac formula for the probabilistic representation of the solution of the PDE. We also investigate uniqueness of the solution of the BSDE in a particular class (Proposition \ref{prop: Markovian BSDE - uniq of sol}).

The paper is organised as follows. In Section \ref{sc: preliminaries} we recall useful results, set the  notation and state the assumptions needed later on. In 
Section \ref{sc: alternative BSDE} we define the integral operator $A^{Y,W}$ and  introduce the equivalent formulation of the BSDE.  Section \ref{sc: PDE results} collects important analytical properties of the PDE associated to the BSDE in  the Markovian case. 
In Section \ref{ssc: preliminaries AYW} we investigate the properties of the occupation time operator and in Section \ref{ssc: existence} we state and prove the main results of existence of a solution to the BSDE   and  the corresponding Feynman-Kac formula. 
Finally in  Appendix \ref{sc: appendix} we state and prove a  technical result needed in the paper, as well as two technical proofs which have been moved here for ease of reading.

\section{Preliminaries and notation}\label{sc: preliminaries}
  
Throughout the paper $c$ and $C$ denote positive constants whose specific value is not important and may change from line to line.

\subsubsection*{Function spaces - notation} We denote by $C^{0,1}([0,T]\times \mathbb R^d)$ the space of real-valued continuous functions on $[0,T]\times \mathbb R^d$ which are continuously differentiable in the variable $x\in \mathbb R^d$. By $\varphi_n\to 0$ in $C^{0,1}$ we mean that $\varphi_n$ and $\nabla \varphi_n$ (the gradient taken w.r.t.\ the $x$-variable) converge to 0 uniformly on compacts. The space $C^{0,1}$ is then endowed with the topology related to this convergence.  
For a vector $\varphi= (\varphi_1, \ldots, \varphi_d)$ such that $\varphi_i\in C^{0,1}([0,T]\times \mathbb R^d)$ for all $i$, we write  $\varphi\in C^{0,1}([0,T]\times \mathbb R^d; \mathbb R^d)$ or $\varphi\in C^{0,1}$ for shortness.
Similarly we denote by $C^{1,2}([0,T]\times \mathbb R^d)$ the space of real-valued  functions on $[0,T]\times \mathbb R^d$ which are continuously differentiable once in $t$ and twice in $x$, and by  $C^{1,2}:= C^{1,2}([0,T]\times \mathbb R^d; \mathbb R^d)$. 
 The topology is similar to the one for $C^{0,1}$.
 Moreover we use  $C_c (\mathbb R^d)$ to denote the space of  continuous functions of $x$ with compact support and  $C_c^\infty(\mathbb R^d) $ to denote the space of  infinitely differentiable functions with compact support. Again the short-hand notation for $ \mathbb R^d$-valued functions is $C_c := C_c (\mathbb R^d;\mathbb R^d)$ and $C_c := C_c^\infty (\mathbb R^d;\mathbb R^d)$.
The Euclidean norm in $\mathbb R$ and $ \mathbb R^d$, and the Frobenius norm in $\mathbb R^{d\times d}$ will be denoted by $|\cdot|$. For a vector $v$, its transpose is denoted by $v^*$. If $v$ is a real-valued function of $x\in\mathbb R$ then $\nabla v^*$ denotes the transpose of the column vector $\nabla v$. Moreover is $u$ is a vector-valued function of $x$ then $\nabla u$ is a matrix where the $j$-th column is given by $\nabla u_j$ so that $(\nabla u )_{i,j} =\frac{\partial} {\partial x_i} u_j$. For the matrix $\nabla u$, we denote its transposed by $\nabla u^*$.
  
\subsubsection*{Stochastic analysis tools}   Throughout the paper    
 $(\Omega,  \mathcal G, P)$ is a probability space on which  a $d$-dimensional Brownian motion $W:=(W_t)_t$ is defined,  with Brownian filtration $\mathcal F :=(\mathcal F_t)_t $. 
   
  We denote by $\mathcal C$  the space of continuous stochastic processes indexed by $[0,T]$  with values in $\mathbb R^d$. In this space we will consider {u.c.p.}\ convergence (uniform convergence in probability) for stochastic processes. More precisely, we say that a family of stochastic processes $X^n$ indexed by $[0,T]$ converges u.c.p.\  to $X$ in $\mathcal C$ if 
  \[
\sup_{s\in[0,T]} \vert X^n_s -X_s\vert \to 0 \textrm{ in probability.}
  \]

The following definitions  of covariation process and  weak-Dirichlet process  are taken from \cite{gozzi_russo06}, see also \cite{russo_vallois95} for more details.

Given two stochastic processes $Y:=(Y_t)_t$ and $X:=(X_t)_t$, we denote by $[Y,X]$ the \emph{covariation process} of $Y$ and $X$ which is  defined by
\[ 
[Y,X]_t := \lim_{\varepsilon \to 0} \frac1\varepsilon \int_0^t (Y_{s+\varepsilon} -Y_s) (X_{s+\varepsilon} -X_s) \mathrm ds,
\]	
if the limit exists in the u.c.p.\  sense in $t$. 
If $X,Y$ are $d$-dimensional processes then $[Y,X]\in \mathbb R^{d\times d}$ is the tensor covariation and it is defined component by component by  $([Y,X])_{i,j} = [Y_i, X_j]$, if it exists. {Note that the covariation is not symmetric because the matrix does not need to be squared and in particular we have $[Y,X] = [X,Y]^*$.} This concept extends the  classical covariation of continuous semimartingales. We remark that the covariation of a bounded variation process and a continuous process is always zero.  
{\begin{defin}\label{def:weak_dirichlet_process}
Given a filtration $\mathcal F:=(\mathcal F_t)_t$, a real process $D$ is said to be an \emph{$\mathcal F$-weak Dirichlet} process if it can be written as $D=M+A$ where 
\begin{itemize}
\item[(i)] $M:=(M_t)_t$ is an $\mathcal F$-local martingale,
\item[(ii)] $(A_t)_t$ is a  \emph{martingale-orthogonal process}, namely a process such that $[A,N]=0$ for every $\mathcal F$-continuous local martingale $N$. For convenience we also set $A_0=0$. 
\end{itemize}
\end{defin}
Note that in \cite{gozzi_russo06} they use the name \emph{weak zero energy process} for the  \emph{martingale-orthogonal process}.}
 It was shown that the decomposition $D=M+A$ is unique and every $\mathcal F$-semimartingale is an $\mathcal F$-weak Dirichlet process. 
A vector $D=(D^1, \ldots, D^d)$ is an  $\mathcal F$-weak Dirichlet process if every component $D^i$ is an $\mathcal F$-weak Dirichlet process.  
We will drop the $\mathcal F$ and simply write weak Dirichlet process when it is clear what filtration $\mathcal F$ we are considering.

\begin{prop}\label{pr: covariation}
  Let $v\in C^{0,1}([0,T]\times \mathbb R^d)$ and $S^1$ (resp.\ $S^2$)  be an $\mathbb R^d$-valued  (resp.\ $\mathbb R$-valued) continuous   $\mathcal F$-semimartingale with martingale component $M^1$ (resp.\ $M^2$). Then 
  \begin{equation}\label{eq: covariation}
[v(\cdot, S^1), S^2]_t = \int_0^t   \nabla v^* (r, S^1_r)  \mathrm d[M^1, M^2 ]_r.  
  \end{equation}
  \end{prop}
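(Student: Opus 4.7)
The plan is to approximate $v$ by smooth functions and pass to the u.c.p.\ limit using the weak Dirichlet framework of Definition \ref{def:weak_dirichlet_process}. First I would apply a standard localization via stopping times $\tau_n := \inf\{t : |S^1_t| + |S^2_t| > n\} \wedge T$ in order to reduce to the situation in which $S^1$ and $S^2$ take values in a compact set $K \subset \mathbb{R}^d$ on $[0,T]$ and both $v$ and $\nabla v$ are bounded on $[0,T] \times K$. I would then pick $v_n \in C^{1,2}([0,T] \times \mathbb{R}^d)$, for instance by convolving $v$ with a smooth mollifier in both variables, such that $v_n \to v$ and $\nabla v_n \to \nabla v$ uniformly on $[0,T] \times K$.

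For each such $v_n$, the classical It\^o formula applied to $v_n(t, S^1_t)$ yields the semimartingale decomposition
\[
v_n(t, S^1_t) - v_n(0, S^1_0) = N^n_t + A^n_t,
\]
where $N^n_t := \int_0^t \nabla v_n^*(r, S^1_r)\,\mathrm{d}M^1_r$ is the local martingale part and $A^n$ is a continuous adapted process of finite variation starting at zero. Standard covariation identities for continuous semimartingales then give
\[
[v_n(\cdot, S^1), S^2]_t = [N^n, M^2]_t = \int_0^t \nabla v_n^*(r, S^1_r)\,\mathrm{d}[M^1, M^2]_r,
\]
and as $n \to \infty$ the right-hand side converges u.c.p.\ to $\int_0^t \nabla v^*(r, S^1_r)\,\mathrm{d}[M^1, M^2]_r$ by dominated convergence, using the uniform convergence of $\nabla v_n$ on $[0,T] \times K$ and the path-by-path finite variation of the entries of $[M^1, M^2]$.

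The main obstacle is passing to the limit on the left-hand side, i.e.\ showing $[v_n(\cdot, S^1), S^2]_t \to [v(\cdot, S^1), S^2]_t$ u.c.p. I would overcome this by invoking the weak Dirichlet structure. Since $v_n(\cdot, S^1) \to v(\cdot, S^1)$ u.c.p.\ (from the uniform convergence of $v_n$ on the compact range of $(t, S^1_t)$) and $N^n \to N := \int_0^\cdot \nabla v^*(r, S^1_r)\,\mathrm{d}M^1_r$ u.c.p.\ (by Burkholder--Davis--Gundy combined with the uniform convergence of $\nabla v_n$), the remainders $A^n := v_n(\cdot, S^1) - v_n(0, S^1_0) - N^n$ converge u.c.p.\ to some continuous process $A$. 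Each $A^n$, being of finite variation, is martingale-orthogonal; the closedness of the class of martingale-orthogonal processes under u.c.p.\ limits (a result from \cite{gozzi_russo06}) then forces $A$ to be martingale-orthogonal as well. Consequently $v(\cdot, S^1)$ is an $\mathcal{F}$-weak Dirichlet process with martingale part $N$. Writing $S^2 = M^2 + V^2$ with $V^2$ continuous of finite variation, bilinearity of the covariation together with the vanishing of $[N, V^2]$, $[A, M^2]$ and $[A, V^2]$ yields $[v(\cdot, S^1), S^2]_t = [N, M^2]_t$, which is exactly the claimed integral.
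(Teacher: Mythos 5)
Your overall strategy (localize, mollify, apply It\^o to $v_n(\cdot,S^1)$, pass to the limit, then conclude by bilinearity) amounts to re-proving the result that the paper simply cites, namely \cite[Corollary 3.11]{gozzi_russo06}: for $v\in C^{0,1}$ the process $v(\cdot,S^1)$ is an $\mathcal F$-weak Dirichlet process with martingale part $\int_0^\cdot \nabla v^*(r,S^1_r)\,\mathrm dM^1_r$. The steps you can justify are indeed fine: the localization, the uniform convergence $v_n\to v$, $\nabla v_n\to\nabla v$ on compacts, the u.c.p.\ convergence of $N^n$ to $N$ and of $\int_0^\cdot \nabla v_n^*\,\mathrm d[M^1,M^2]$ to $\int_0^\cdot \nabla v^*\,\mathrm d[M^1,M^2]$, and the final computation $[v(\cdot,S^1),S^2]=[N,M^2]$ once the weak Dirichlet decomposition is known.

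The gap is the step where you claim that the u.c.p.\ limit $A$ of the finite-variation remainders $A^n$ is martingale-orthogonal because ``the class of martingale-orthogonal processes is closed under u.c.p.\ limits.'' No such closedness result holds, and it cannot hold: the covariation is not continuous under u.c.p.\ convergence. A counterexample is given by the piecewise-linear dyadic interpolations $X^n$ of a Brownian motion $W$; each $X^n$ has bounded variation, hence $[X^n,W]=0$, and $X^n\to W$ uniformly a.s., yet $[W,W]_t=t\neq 0$. So from $[A^n,N]=0$ for all $n$ and $A^n\to A$ u.c.p.\ you cannot conclude $[A,N]=0$. This is exactly why the $C^{0,1}$ decomposition in \cite{gozzi_russo06} is a genuine theorem: its proof does mollify, but then estimates the approximating covariation expressions $\frac1\varepsilon\int_0^t(A^n_{s+\varepsilon}-A^n_s)(N_{s+\varepsilon}-N_s)\,\mathrm ds$ directly, exploiting the specific structure of the remainder (a Taylor-type expansion of $v$), rather than invoking any stability of orthogonality under u.c.p.\ limits. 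To make your argument complete you would either have to reproduce such a quantitative estimate uniformly in $n$, or simply cite \cite[Corollary 3.11]{gozzi_russo06} as the paper does, after which your concluding bilinearity computation (using $[A,M^2]=0$, $[N,V^2]=0$, $[A,V^2]=0$) is exactly the paper's proof.
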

\begin{proof}
Let us denote by $M^v_t:=\int_0^t  \nabla v^* (r, S^1_r)  \mathrm d M^1_r $. By \cite[Corollary 3.11]{gozzi_russo06} we have that $v(\cdot, S^1)$ is a weak Dirichlet  process with martingale component $M^v$.
If $A^v$ is the related martingale-orthogonal   process, we know that $[A^v, N] = 0$ for any $\mathcal F$-continuous local martingale $N$, 
 see \cite[Proposition 1.7.(b)]{RVSem}.
 Consequently the left-hand side of \eqref{eq: covariation} gives 
\begin{align*}
[v(\cdot, S^1), S^2]_t&=[M^v, M^2]_t\\
&=\left[ \int_0^\cdot  \nabla v^*(r, S^1_r)  \mathrm d M^1_r, M^2\right]\\
&=\int_0^t  \nabla v^* (r, S^1_r)  \mathrm d[M^1, M^2 ]_r,  
\end{align*}
where the last equality holds true because the covariation $[\cdot, \cdot]$ extends the one of semimartingales.
\end{proof} 
When $v$ is a vector-valued function (say $u$), the covariation becomes a matrix and an analogous result holds, as stated in the corollary below (in the special case when $u$ is a function of Brownian motion).
\begin{coroll}\label{cor: covariation}
  Let $\phi\in C^{0,1}([0,T]\times \mathbb R^d; \mathbb R^d)$, $W$   be an $\mathbb R^d$-valued $\mathcal F$-Brownian motion  and $N$ an   $\mathcal F$-continuous local martingale with values in $\mathbb R^d$. Then 
  \begin{equation*}\label{eq: covariation Rd}
[\phi(\cdot, W),  N ]_t = \int_0^t \nabla \phi^*  (r, W_r) \mathrm d  [W, N]_r. 
\end{equation*} 
\end{coroll}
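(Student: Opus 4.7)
The plan is to reduce the matrix-valued identity to the scalar statement in Proposition~\ref{pr: covariation} entry by entry, then recognize the resulting family of scalar integrals as a single matrix stochastic integral against $\mathrm{d}[W,N]$.

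First, I would unpack both sides of the claim as $d\times d$ matrices. Fix $i,j\in\{1,\dots,d\}$. On the left-hand side, the $(i,j)$-entry of $[\phi(\cdot,W),N]_t$ is by definition the scalar covariation $[\phi_i(\cdot,W),N_j]_t$. On the right-hand side, using the convention $(\nabla\phi)_{k,j}=\partial_{x_k}\phi_j$ from the preliminaries, we have $(\nabla\phi^*)_{i,k}=\partial_{x_k}\phi_i$, so
\begin{equation*}
\Bigl(\int_0^t\nabla\phi^*(r,W_r)\,\mathrm d[W,N]_r\Bigr)_{i,j}=\sum_{k=1}^d\int_0^t\partial_{x_k}\phi_i(r,W_r)\,\mathrm d[W_k,N_j]_r.
\end{equation*}
This is exactly the scalar integral $\int_0^t\nabla\phi_i^*(r,W_r)\,\mathrm d[W,N_j]_r$, where here $\nabla\phi_i$ is the usual (column) gradient of the scalar field $\phi_i$.

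Second, I would invoke Proposition~\ref{pr: covariation} with the scalar function $v:=\phi_i\in C^{0,1}([0,T]\times\mathbb R^d)$, the $\mathbb R^d$-valued semimartingale $S^1:=W$ (whose martingale part is $M^1=W$ since $W$ is already a Brownian motion), and the real-valued semimartingale $S^2:=N_j$ (which is itself a continuous local martingale, so $M^2=N_j$). This yields
\begin{equation*}
[\phi_i(\cdot,W),N_j]_t=\int_0^t\nabla\phi_i^*(r,W_r)\,\mathrm d[W,N_j]_r,
\end{equation*}
which, by the identification above, is the $(i,j)$-entry of the claimed right-hand side.

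Since this holds for every pair $(i,j)$, assembling the entries gives the matrix identity and completes the proof. There is no real obstacle here: the content is entirely in Proposition~\ref{pr: covariation}, and the corollary is simply a bookkeeping exercise to confirm that the componentwise statement repackages correctly under the matrix conventions $(\nabla\phi)_{i,j}=\partial_{x_i}\phi_j$ and $([W,N])_{i,j}=[W_i,N_j]$. The only point that needs a moment's care is keeping track of the transpositions so that $\nabla\phi^*\,\mathrm d[W,N]$ has the right shape.
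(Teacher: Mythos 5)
Your proof is correct and matches the approach the paper implicitly intends: the corollary is stated without proof, immediately after the remark that "an analogous result holds" for vector-valued $\phi$, precisely because it follows by applying Proposition~\ref{pr: covariation} componentwise with $v=\phi_i$, $S^1=W$ (so $M^1=W$) and $S^2=N_j$ (so $M^2=N_j$). Your careful bookkeeping of the conventions $(\nabla\phi)_{i,j}=\partial_{x_i}\phi_j$ and $([W,N])_{i,j}=[W_i,N_j]$ makes the matrix reassembly explicit and is the right level of detail.
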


\subsubsection*{Heat semigroup and fractional Sobolev spaces}
We denote by $\mathcal S(\mathbb R^d)$ the space of $\mathbb R^d$-valued Schwartz functions and by  $\mathcal S'(\mathbb R^d)$ the space of Schwartz distributions.  
Setting $A:= I-\frac12 \Delta$, we can view this as an operator in $\mathcal S' (\mathbb R^d)$. Then its fractional powers $A^\alpha$ are well-defined on the same space for any power $\alpha\in \mathbb R$ by means of Fourier transform (see e.g.\ \cite[Remark 1.2 (iii)]{triebel10}). One can define the classical fractional Sobolev spaces via these fractional powers, that is $H^s_r(\mathbb R^d):= A^{-s/2}(L^r(\mathbb R^d))$. These are Banach spaces endowed with the norm $\|u\|_{H^s_r}:=\|A^{s/2}u\|_{L^r}$. It is also known  that $A^{-\alpha/2}$ is an isomorphism between $H^s_r(\mathbb R^d)$  and $H_r^{s+\alpha}(\mathbb R^d)$, for each $\alpha\in \mathbb R$,  (see again \cite[Remark 1.2 (iii)]{triebel10}).  
 We denote by  $(P(t), t\geq0)$ the heat semigroup
 which acts on any $L^r(\mathbb R^d)$  for $1<r<\infty$,
 with kernel $p_t(x)= \frac{1}{(2\pi t)^{d/2}} \exp \left( -\frac{|x|^2}{2t}\right) $. This    is a bounded analytic semigroup generated by $\tfrac12 \Delta$, see \cite[Theorems 1.4.1, 1.4.2]{davies89}.
 We denote by  $(S(t), t\geq 0)$ the semigroup given by $S(t):= e^{-t}P(t)$. If we consider $A$ as an unbounded operator on $L^r(\mathbb R^d)$, then it is 
well-known that the semigroup $S$ is generated by $-A$ and $D(A) = H^2_r(\mathbb R^d)$. 
 Fractional powers of $A$, as unbounded operator on $L^r(\mathbb R^d)$,
 where $-A$ is the generator of an analytic
 semigroup can also be defined (see \cite[Section 2.6]{pazy83})  and a key fact  that links these operators with fractional Sobolev spaces is that  $D(A^{s/2}) = H^s_r(\mathbb R^d) $, which follows from interpolation theory.\footnote{This can be seen by applying \cite[Theorem 1.15.3]{triebel78} with $\alpha=0,\, \beta=1$ and $0<\theta<1$ to get $D(A^\theta) = [D(A^0), D(A^1)]_\theta$. The latter is equal to  $[L^p, H^2_p]_\theta$ because of known result on the operator $A$ and its integer powers. Finally using \cite[Theorem 2.4.2/1]{triebel78} with $q_0=q_1=q=2,\, p_0=p_1=p$ and $s_0=1, s_1=2$ (so that $s=2\theta$) one gets $[L^p, H^2_p]_\theta = [H^0_p, H^2_p]_\theta = H^{2\theta}_p$.} 
Using this and the isomorphism property one has for $\delta>\beta>0, \delta+\beta<1$ and $0<t\leq T$ that  $P(t): H^{-\beta}_r(\mathbb R^d) \to H^{1+\delta}_r(\mathbb R^d) $ for all $1<r<\infty$
and
\begin{equation}\label{eq: mapping prop Pt}
\|P(t)w\|_{H_r^{1+\delta}(\mathbb R^d)}\le C e^t  t^{-\frac{1+\delta+\beta}{2}}\|w\|_{H_r^{-\beta} (\mathbb R^d)},
\end{equation}
for $w\in H^{-\beta}_r(\mathbb R^d), t>0$. This follows from a similar property for the bounded analytic semigroup   $S$ which is stated in \cite[Lemma 10]{flandoli_et.al14}, see also  \cite[Proposition 3.2]{issoglio13} for the analogous on domains $D\subset \mathbb R^d$.
Moreover it is easy to show\footnote{This   can be seen by writing $P(t) = e^t e^{-t} P(t) = e^{t}S(t)$. Since  $-A$ is the generator of  $S$ we have that $S(t): L^r \to D (A^{s/2}) $ by  \cite[Chapter 2, Thm 6.13 (a)]{pazy83}. Moreover $D(A^{s/2})= H^s_r$ as recalled above. Let $w\in H^s_r$, so we also have $w\in L^r$ thus $S(t)w\in H^s_r$. Then by the definition of norm in $H^r_s$ we get $\| P(t)w\|_{H^s_r}=e^t \|S(t)w\|_{H^s_r} = e^t \|A^{s/2} S(t)w\|_{L^r} $. Now applying \cite[Chapter 2, Thm 6.13 (b)]{pazy83} we know that $A^{s/2} $ and $S(t)$ commute and using the contractivity of $P(t)$  on $L^r$ we get  $ \|P(t) w \|_{H^s_r} \leq \|e^t  S(t) A^{s/2} w\|_{L^r} \leq  \|A^{s/2} w\|_{L^r} $ and the latter is equal to $\|w\|_{H^s_r} $ by definition of the norm. } that  $P(t)$ is a contraction on $H^s_r(\mathbb R^d)$ for all $1<r<\infty$ and $s\geq0$, that is for all $w \in H_r^{s}(\mathbb R^d)$ we have
\begin{equation}\label{eq: P contraction} 
\|P(t)w\|_{H_r^{s}(\mathbb R^d)}\le  \|w\|_{H_r^{s}(\mathbb R^d)}.
\end{equation}

As done already before in this paper, we denote by  $H^s_r$ the spaces  $H^s_r(\mathbb R^d; \mathbb R^d)$, whose definition is as above for each component. {Note that by slight abuse of notation the same  $H^s_r$ might be the space  $H^s_r(\mathbb R^d; \mathbb R^{d\times d})$, especially when considering functions like $\nabla u$.}   
When we write $u\in H^s_r $ we mean that each component $u_i$ is in $H^s_r(\mathbb R^d)$. The norm will be denoted with the same notation for simplicity. 
 
 \subsubsection*{Pointwise product}   Here we recall the definition of the {\em pointwise product} between a function and a distribution, for more details see \cite{runst_sickel96}. Let $g\in \mathcal{S}^\prime(\mathbb R^d)$. We choose a function $\psi\in \mathcal S (\mathbb R^d)$ such that  $0\le \psi(x)\le 1 $, for every $x\in \mathbb R^d$  and 
 \begin{equation*}
 \psi(x)=\left\{
 \begin{array}{ll}	
1, &\quad|x|<1,\\
0, &\quad|x|\ge 2.
 \end{array}\right.
 \end{equation*}
 For every $j\in \mathbb{N}$, we consider the approximation $S^j g$ of $g$ as follows:
 \begin{equation*}
 S^jg (x):=\mathscr{F}^{-1} \left(\psi\left(\frac{\xi}{2^j}\right)\mathscr{F}(g)\right)(x),
 \end{equation*}
 where $\mathscr{F}(g)$ and $\mathscr{F}^{-1}(g)$ are the Fourier transform  and the inverse Fourier transform  of $g$, respectively. 
 The  product $gh$ of $g, h\in \mathcal{S}^\prime(\mathbb R^d)$ is defined as 
 \begin{equation} \label{eq: pointwise product}
 gh:=\lim_{j\to\infty}S^j g S^j h,
 \end{equation} 
 if the limit exists in $\mathcal{S}^\prime (\mathbb R^d)$. 

 \begin{lemma}\label{lm: pointwise product}\cite[Theorem 4.4.3/1]{runst_sickel96}
 Let  $g\in H_q^{-\beta}(\mathbb{R}^d)$, $h\in  H_p^{\delta}(\mathbb R^d)$ for $1<p,q<\infty$, $q>\max(p,\frac{d}{\delta})$, $0<\beta<\frac{1}{2}$ and $\beta<\delta$. Then the pointwise product $gh$ is well-defined, it belongs to the space $ H_p^{-\beta}(\mathbb R^d)$ and we have the  bound
\begin{equation} \label{eq: pointwise product bound}
 \|gh   \|_{  H_p^{-\beta}(\mathbb R^d)} \leq c\| g\|_{ H_q^{-\beta}(\mathbb{R}^d)}   \cdot  \|  h \|_{ H_p^{\delta}(\mathbb R^d)}.
\end{equation}
\end{lemma}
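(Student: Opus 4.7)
The plan is to reduce the claim to a positive-regularity multiplication theorem via a duality argument. For $0<\beta<1/2$ and $1<p<\infty$, the Bessel potential space $H^{-\beta}_p(\mathbb R^d)$ is the topological dual of $H^\beta_{p'}(\mathbb R^d)$ under the usual distributional pairing, where $p'$ denotes the H\"older conjugate of $p$. If one first establishes the auxiliary product estimate
$$ \|h\varphi\|_{H^\beta_{q'}(\mathbb R^d)} \le C\,\|h\|_{H^\delta_p(\mathbb R^d)}\,\|\varphi\|_{H^\beta_{p'}(\mathbb R^d)}, \qquad \varphi\in\mathcal S(\mathbb R^d), $$
with $q'$ the conjugate of $q$, then the duality bound $|\langle g, h\varphi\rangle| \le \|g\|_{H^{-\beta}_q}\|h\varphi\|_{H^\beta_{q'}}$ yields that $\varphi\mapsto \langle g, h\varphi\rangle$ extends to a bounded linear functional $F$ on $H^\beta_{p'}$, which we identify with an element of $H^{-\beta}_p$ satisfying the required norm bound.

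The auxiliary estimate is a classical multiplication theorem for two functions of positive regularity. I would prove it by Bony's paraproduct decomposition $h\varphi=\pi_h\varphi+\pi_\varphi h+R(h,\varphi)$ combined with the Littlewood--Paley characterisation of $H^s_r(\mathbb R^d)$ as the Triebel--Lizorkin space $F^s_{r,2}(\mathbb R^d)$. Each of the two paraproducts is handled by H\"older's inequality applied to the frequency-localised pieces followed by a Fefferman--Stein maximal argument, while the resonant term is tractable thanks to the strict positivity of the regularity sum $\delta+\beta>0$. The admissible relation between the three integrability exponents $p, p', q'$, together with the requirement that a Sobolev embedding converts the surplus $\delta-\beta$ of derivatives on $h$ into a gain of integrability, is precisely what is enforced by the hypotheses $q>p$ and $q>d/\delta$.

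Having the estimate for Schwartz $\varphi$, to identify $F$ with the distribution $gh$ defined in \eqref{eq: pointwise product}, I would apply the same estimate uniformly to the smooth approximations $S^j g$ and $S^j h$, exploit that $S^j g\to g$ in $H^{-\beta}_q(\mathbb R^d)$ and $S^j h\to h$ in $H^\delta_p(\mathbb R^d)$ (a consequence of $\psi\equiv 1$ near the origin together with density of Schwartz functions in Bessel potential spaces), and pass to the limit in
$$\langle S^j g\cdot S^j h,\varphi\rangle \;=\; \langle S^j g,\, S^j h\cdot\varphi\rangle \;\longrightarrow\; \langle g, h\varphi\rangle \;=\; F(\varphi),\qquad \varphi\in\mathcal S(\mathbb R^d),$$
where the right-hand convergence uses the auxiliary estimate applied to $S^j h-h$ and $\varphi$. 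This shows that the limit in \eqref{eq: pointwise product} exists in $\mathcal S'(\mathbb R^d)$ and equals $F$.

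The main obstacle is the auxiliary product estimate itself: tracking the three exponents $p, p', q'$ through the paraproduct decomposition and the resonant sum requires a careful bookkeeping of Sobolev embeddings, and the asymmetric condition $q>\max(p,d/\delta)$ appears only at this stage. This is exactly the content of the Triebel--Lizorkin multiplier theorem \cite[Theorem 4.4.3/1]{runst_sickel96} cited by the authors, and reproving it from scratch would amount to redoing that result rather than using it as a black box.
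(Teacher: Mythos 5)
Your proposal is correct in outline, but it is worth noting that the paper contains no proof of this lemma at all: the statement is taken as a black box from Runst and Sickel \cite[Theorem 4.4.3/1]{runst_sickel96}, which is a multiplication theorem for a factor of negative smoothness times a factor of positive smoothness, proved there by estimating the Bony decomposition of $gh$ directly. Your route is genuinely different: you dualise via $H^{-\beta}_p(\mathbb R^d)=\bigl(H^{\beta}_{p'}(\mathbb R^d)\bigr)'$, reduce to the positive-regularity estimate $\|h\varphi\|_{H^{\beta}_{q'}}\le C\|h\|_{H^{\delta}_{p}}\|\varphi\|_{H^{\beta}_{p'}}$, and then reconcile the resulting functional with the limit $\lim_{j}S^jg\,S^jh$ in \eqref{eq: pointwise product}. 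The exponent bookkeeping is consistent with the hypotheses: for instance, in the paraproduct with $h$ at low frequency one applies H\"older with $\tfrac1r=\tfrac1p-\tfrac1q$, and the conditions $q>p$ and $q>\tfrac d\delta$ are exactly what make $H^{\delta}_p\hookrightarrow L^{r}$ available, while $\delta+\beta>0$ (from $\beta<\delta$) takes care of the resonant term; and your final paragraph supplies something the bare citation leaves implicit, namely that the duality extension coincides with the product in the sense of \eqref{eq: pointwise product} (for that step you should also invoke the uniform boundedness of the operators $S^j$ on $H^{\delta}_p$ and $H^{-\beta}_q$, not just density of $\mathcal S$). What each approach buys: the citation is shorter and covers all admissible exponents at once; your duality reduction trades the negative-smoothness product theorem for a positive-smoothness one, which is more classical, but — as you yourself concede — its proof is of the same paraproduct nature, and strictly speaking the auxiliary estimate you need is a different (easier) entry of the same multiplication theory rather than Theorem 4.4.3/1 itself, so little is saved analytically, though your argument does make the existence of the limit in \eqref{eq: pointwise product} transparent.
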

In this paper we will always use this product in such fractional Sobolev 
spaces.

 \subsubsection*{More on function spaces}
We observe that when we talk about smooth drivers we consider elements of $C_c([0,T]\times \mathbb R^d; \mathbb R^d)$ or of  $C_c^\infty ([0,T]\times \mathbb R^d; \mathbb R^d)$, which is  defined to be the space of all $f\in C_c([0,T]\times \mathbb R^d; \mathbb R^d)$ such that $ \frac{\partial^\alpha f}{\partial x^\alpha}$ exists for all  multi-indexes $ \alpha$  and $ \frac{\partial^\alpha f}{\partial x^\alpha} \in C_c([0,T]\times \mathbb R^d; \mathbb R^d)$. It is clear that each function in $ C_c^\infty([0,T]\times \mathbb R^d)$ is an element of $L^r(\mathbb R^d)$ for any fixed time $t\in[0,T]$ and for $2\leq r\leq \infty$, and moreover it is continuous with respect to the topology in $L^r(\mathbb R^d)$. Since $L^r(\mathbb R^d)\subset H^{s}_{r}(\mathbb R^d)$ for $s\leq 0$ we have the inclusion $C_c^\infty([0,T]\times \mathbb R^d; \mathbb R^d)\subset C([0,T]; H^s_r)$.  
 
For the following, see \cite[Section 2.7.1]{triebel78}.  The closures of  $\mathcal S(\mathbb R^d)$
 with respect to the norms 
$$
\|h\|_{C_b^{0,0}(\mathbb R^d)} := \|h\|_{L^\infty(\mathbb R^d)}
$$ 
and 
$$
\|h\|_{C_b^{1,0}(\mathbb R^d)} := \|h\|_{L^\infty(\mathbb R^d)} + \|\nabla h\|_{L^\infty(\mathbb R^d)} 
$$ 
respectively, are denoted by $C_b^{0,0}(\mathbb R^d)$ and $C_b^{1,0}(\mathbb R^d)$. 
For any $\alpha >0$, we  consider the Banach spaces
\begin{align*}
	&C^{0+ \alpha} (\mathbb R^d)= \{ h \in C_b^{0,0}(\mathbb R^d) :  \|h\|_{C^{0+\alpha}(\mathbb R^d)} <\infty \}\\
	&C^{1+\alpha}(\mathbb R^d) =\{  h \in C_b^{1,0}(\mathbb R^d) :  \|h\|_{C^{1+\alpha}(\mathbb R^d)} <\infty \},
\end{align*}
endowed with the norms
\begin{align*}
	&\|h\|_{C^{0+\alpha} (\mathbb R^d)}:= \| h \|_{L^\infty(\mathbb R^d)}  + \sup_{x\neq y \in \mathbb R^d}  \frac{|h(x)-h(y)|}{|x-y|^\alpha} \\
	& \|h\|_{C^{1+\alpha}(\mathbb R^d)}  := \| h \|_{L^\infty(\mathbb R^d)}  + \| \nabla h \|_{L^\infty(\mathbb R^d)} + \sup_{x\neq y \in \mathbb R^d}  \frac{|\nabla h(x)-\nabla h(y)|}{|x-y|^\alpha},
\end{align*}
respectively. We denote by $C^{0+\alpha}$ and $C^{1+\alpha}$  the analogous spaces for $\mathbb R^d$-valued functions and the corresponding norms by $\|\cdot\|_{C^{0+\alpha}}$ and $\|\cdot\|_{C^{1+\alpha}}$.

Let $B$ be a Banach space. We denote by $C([0,T];B)$ the Banach space of $B$-valued continuous functions and its sup norm by $\|\cdot\|_{C([0,T];B)}$. For $h\in C([0,T];B)$ and $\rho\ge 1$ we also use the family of equivalent norms $\{\|\cdot\|_{C([0,T];B)}^{(\rho)}, \rho\ge 1\}$, defined by
\begin{equation}\label{eq: rho equiv norm}
\|h\|_{C([0,T];B)}^{(\rho)}:=\sup_{0\le t\le T} e^{-\rho t} \|h(t)\|_{B}. 
\end{equation}
The following lemma is a fractional Sobolev embedding theorem which will be used several times in this paper. It is a generalisation of the Morrey inequality to fractional Sobolev spaces. For the proof we refer to \cite[Theorem 2.8.1, Remark 2]{triebel78}.

\begin{lemma}[Fractional Morrey inequality]\label{lm: fractional Morrey ineq}
	Let $0<\delta< 1 $ and $d/\delta<r<\infty$. If $h\in H^{1+\delta}_r(\mathbb R^d)$ then there exists a unique version of $h$ (which we denote again by  $h$) such that $h$ is differentiable. Moreover $ h\in C^{1+\alpha}(\mathbb R^d)$ with $\alpha= \delta-d/r$ and
	\begin{equation}\label{eq: fractional Morrey ineq}
	\|h\|_{C^{1+\alpha}(\mathbb R^d)}\leq c \|h\|_{H^{1+\delta}_r(\mathbb R^d)}, \quad  \|\nabla h\|_{C^{0+\alpha}(\mathbb R^d)}\leq c \|\nabla h\|_{H^{\delta}_r(\mathbb R^d)},
	\end{equation}
	where $c=c(\delta, r, d)$ is a universal constant.
In particular $h$ and $\nabla h$ are bounded.	
 
\end{lemma}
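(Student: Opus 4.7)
The plan is to reduce the statement to the basic fractional Sobolev embedding
\begin{equation*}
H^\delta_r(\mathbb R^d)\hookrightarrow C^{0+\alpha}(\mathbb R^d), \qquad \alpha=\delta-d/r,
\end{equation*}
valid under the standing assumption $\delta>d/r$ (which is exactly $r>d/\delta$). This is the fractional analogue of Morrey's inequality and is the heart of the matter; once it is in hand, the lemma follows by unfolding norms.

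Once the embedding is available, I would argue as follows. Since $A=I-\tfrac12\Delta$ commutes with partial derivatives, a Fourier multiplier computation (equivalently, the mapping properties of $A^{-1/2}$ recalled in the preliminaries) yields the norm equivalence
\begin{equation*}
\|h\|_{H^{1+\delta}_r}\simeq \|h\|_{H^\delta_r}+\|\nabla h\|_{H^\delta_r}.
\end{equation*}
Hence $h\in H^{1+\delta}_r$ gives both $h\in H^\delta_r$ and, componentwise, $\nabla h\in H^\delta_r$. Applying the embedding to $h$ produces a uniformly continuous representative of $h$ with a uniform bound; applying it componentwise to $\nabla h$ gives uniform boundedness of $\nabla h$ together with the $\alpha$-Hölder modulus. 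Summing these bounds yields the first inequality in \eqref{eq: fractional Morrey ineq}, and applying the embedding to $\nabla h$ on its own yields the second. The continuous (and differentiable) version of $h$ is the one produced by this representation, and it is the unique such version because two continuous functions that agree a.e.\ must coincide.

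For the core embedding $H^\delta_r\hookrightarrow C^{0+\alpha}$, my preferred route is the Bessel potential representation: any $f\in H^\delta_r$ may be written as $f=G_\delta\ast g$ with $g\in L^r$, where $G_\delta=\mathscr{F}^{-1}((1+|\xi|^2)^{-\delta/2})$ is the Bessel kernel. The kernel $G_\delta$ has an integrable $|x|^{\delta-d}$ singularity at the origin and exponential decay at infinity; under $\delta>d/r$ this places $G_\delta$ in $L^{r'}$ with $1/r+1/r'=1$, so Hölder's inequality immediately gives $\|f\|_{L^\infty}\le \|G_\delta\|_{L^{r'}}\|g\|_{L^r}$. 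The Hölder seminorm bound comes from a matching $L^{r'}$ estimate on the increments $G_\delta(x-\cdot)-G_\delta(y-\cdot)$. These kernel computations are carried out in \cite[Theorem 2.8.1, Remark 2]{triebel78}, which we invoke rather than reproduce.

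The main obstacle is the sharp exponent $\alpha=\delta-d/r$ in the Hölder increment estimate for $G_\delta$: achieving it requires a delicate scaling argument balancing the near-diagonal singularity of the kernel against its exponential tail decay. Everything else in the proof — the reduction from $h\in H^{1+\delta}_r$ to estimates on $h$ and $\nabla h$ in $H^\delta_r$, and the passage from the sup and Hölder bounds to membership in $C^{1+\alpha}$ — is routine once the embedding is secured.
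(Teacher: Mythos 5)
Your proposal is correct in substance, but it is worth noting that the paper offers no argument at all for this lemma: it simply cites \cite[Theorem 2.8.1, Remark 2]{triebel78}, where the embedding $H^{s}_r(\mathbb R^d)\hookrightarrow C^{s-d/r}$ is proved by Fourier-analytic (Littlewood--Paley) methods. You instead supply a genuine proof skeleton: the lifting/norm equivalence $\|h\|_{H^{1+\delta}_r}\simeq \|h\|_{H^\delta_r}+\|\nabla h\|_{H^\delta_r}$ (valid for $1<r<\infty$ via Mikhlin multipliers) reduces everything to the first-order embedding $H^\delta_r\hookrightarrow C^{0+\alpha}$, which you then attack through the Bessel potential representation $f=G_\delta\ast g$, $g\in L^r$, using $G_\delta\in L^{r'}$ (true precisely when $r>d/\delta$, since $G_\delta(x)\sim c|x|^{\delta-d}$ near $0$ with exponential decay) and an $L^{r'}$ increment estimate on the kernel. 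This is a legitimately more elementary and self-contained route than the paper's citation, and it makes transparent where the condition $r>d/\delta$ and the exponent $\alpha=\delta-d/r$ come from. Two caveats. First, your attribution of the "kernel computations" to \cite[Theorem 2.8.1]{triebel78} is loose: that theorem does not estimate Bessel-kernel increments, so if you want to lean on it you should simply cite the embedding itself (as the paper does), or else prove the bound $\|G_\delta(x-\cdot)-G_\delta(y-\cdot)\|_{L^{r'}}\le C|x-y|^{\delta-d/r}$ directly, which is a routine near/far splitting using $|\nabla G_\delta(z)|\le C|z|^{\delta-d-1}e^{-c|z|}$. Second, two small steps deserve a sentence each: (i) passing from "continuous representative of $h$ plus continuous representatives of the distributional partials" to classical differentiability is the standard mollification argument (smooth approximations converge locally uniformly together with their gradients); (ii) the paper's $C^{1+\alpha}$ is defined inside the closure of $\mathcal S$ under the $C^{1,0}_b$ norm, so mere boundedness and H\"older continuity are formally not enough — you get membership by approximating $h$ in $H^{1+\delta}_r$ (where $\mathcal S$ is dense, $r<\infty$) and using your a priori inequality on the differences. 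Neither point is a genuine obstruction; they are routine completions of an otherwise sound argument.
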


\subsubsection*{Assumptions} Later in the paper we will use the following  assumptions about the parameters  and the functions involved. 

\begin{assumption}\label{ass: parameters} We always choose $(\delta, p)\in K(\beta,q)$, where the latter set is defined below in two different cases.
\begin{description}
\item [Case $d\geq2$.] Let $\beta\in\left(  0,\frac{1}{2}\right)  $ and  $q\in\left(  \frac{d}{1-\beta},\frac{d}{\beta}\right)  $.		For given $\beta$ and $q$ as above we define the  set
		\begin{equation}\label{eq: set admissible kappa}
		K(\beta,q):=\left\{(\delta, p)\in \mathbb R^2: \; \beta< \delta< 1-\beta, \,  \frac d\delta < p < q   \right\},
		\end{equation}
		which is drawn in  Figure \ref{fig: kappa}. 

\item [Case $d=1$.] In this case we let $\beta\in\left(  0,\frac{1}{2}\right)  $ and  $q\in\left(  2 ,\frac{1}{\beta}\right)  $. For given $\beta$ and $q$ as above we define the  set
\begin{equation}\label{eq: set admissible kappa d=1}
		K(\beta,q):=\left\{(\delta, p)\in \mathbb R^2: \; \beta< \delta< 1-\beta, \,   \frac1\delta < p < q, 2 \leq p  \right\},
		\end{equation} which is drawn in  Figure \ref{fig: kappa d=1}. 
		
\end{description}		 
\end{assumption}

\begin{figure}
\begin{tikzpicture}
	\draw[->] (-1,0)--(6.2,0) node[below right] {$\frac1p$};
	\draw [->] (0.5,-1)--(0.5,3) node[left] {$\delta$};
	\fill[gray] (2, .9)--(2,2.3)--(4.33,2.3)--(2, .9);
	\draw (-.5,.3)  node[anchor=east]{$\beta$}--(5.5,.3);
	\draw[dashed] (-.5,1.7)  node[anchor=east]{\color{blue}$\delta$}--(3.325,1.7);
	\filldraw [blue] (3.1,1.7) circle (1.5pt) node[anchor=south] {\footnotesize \color{blue} $(1/p,\delta) $};
	\draw (-.5,2.3) node[anchor=east]{$1-\beta$} --(5.5,2.3);
	\draw (2,-.7) node[anchor=south west]{$ \frac1q$} --(2,2.8);
	\draw[dashed] (3.1,-.7) node[anchor=south east]{\color{blue}$ \frac1 p$} --(3.1,1.7);
	\draw[dashed] (3.325,-.7) node[anchor=south west]{$ \frac\delta d$} --(3.325,1.7);
	\draw[dashed] (1,-.7) node[anchor=south west]{$ \frac{ \beta}{ d}$} --(1,2.8);
	\draw[dashed] (4.33,-.7) node[anchor=south west]{$\frac{1-\beta}{ d}$} --(4.33,2.8);
	\draw (.5,0)--(5,2.7);  
	\end{tikzpicture}
		\caption{The set $K(\beta, q)$ for $d>1$. Given any couple $\beta, q$ that satisfies the assumptions, the grey region shows all possible $\delta, p$.} \label{fig: kappa}
\end{figure}

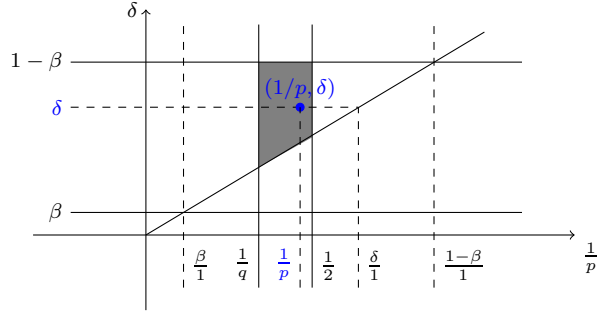
\begin{figure}
	\begin{tikzpicture}
	\draw[->] (-1,0)--(6.2,0) node[below right] {$\frac1p$};
	\draw [->] (0.5,-1)--(0.5,3) node[left] {$\delta$};
	\fill[gray] (2, .9)--(2,2.3)--(2.71,2.3)--(2.71, 1.3);
	\draw (-.5,.3)  node[anchor=east]{$\beta$}--(5.5,.3);
	\draw[dashed] (-.5,1.7)  node[anchor=east]{\color{blue}$\delta$}--(3.325,1.7);
	\filldraw [blue] (2.55,1.7) circle (1.5pt) node[anchor=south] {\footnotesize \color{blue} $(1/p,\delta) $};
	\draw (-.5,2.3) node[anchor=east]{$1-\beta$} --(5.5,2.3);
	\draw (2,-.7) node[anchor=south east]{$ \frac1q$} --(2,2.8);
	\draw[dashed] (2.55,-.7) node[anchor=south east]{\color{blue}$ \frac1 p$} --(2.55,1.7);
	\draw[dashed] (3.325,-.7) node[anchor=south west]{$\frac \delta 1$} --(3.325,1.7);
	\draw[dashed] (1,-.7) node[anchor=south west]{$\frac{ \beta}1$} --(1,2.8);
	\draw[dashed] (4.33,-.7) node[anchor=south west]{$\frac{1-\beta}1 $} --(4.33,2.8);
	\draw (2.71,-.7) node[anchor=south west]{$ \frac12 $} --(2.71,2.8);
	\draw (.5,0)--(5,2.7);  
	\end{tikzpicture}
	 
	\caption{The set $K(\beta, q)$ for $d=1$. Given any couple $\beta, q$ that satisfies the assumptions, the grey region shows all possible $\delta, p$.} \label{fig: kappa d=1}
\end{figure}
Note that $K(\beta,q)$ is non-empty since $\beta<\frac12 $ and $ \frac d{1-\beta}< q < \frac d\beta $. The set $K(\beta, q)$ was first introduced in  \cite{flandoli_et.al14} without the restriction $q,p\geq2$. This is satisfied anyway if $d>1$. If $d=1$ then the set of admissible couples $(\delta, p)$ is shown in Figure \ref{fig: kappa d=1}.

The following  assumption concerns the driver $f$ and the terminal condition $\Phi$ (recall that the terminal condition $\xi$ in the BSDE will be replaced by $\Phi(W)$ in later sections).
\begin{assumption}\label{ass: coefficients}
We assume the following.
\begin{itemize}
\item {$\Phi:\mathbb R^d\to \mathbb R^d$ is an element of $H^{1+\delta+2\gamma}(\mathbb R^d)$ for some $0<\gamma<\frac{1-\delta-\beta}{2}$;} 
\item   $f:[0,T]\times\mathbb R^d \times \mathbb R^d\times \mathbb R^{d\times d} \to \mathbb R^d$ is continuous in $(x, y,z)$ uniformly in $t$, and is Lipschitz continuous in $(y,z)$ uniformly in $t$ and $x$, i.e.\ $\vert f(t,x,y,z)-f(t,x, y', z')\vert\leq L(\vert y-y'\vert+ \vert z-z'\vert )$ for any   $y,y' \in \mathbb R^d$ and $z, z'\in \mathbb R^{d\times d}$. Moreover $f(t,x,0,0)$ is continuous in $(t,x)$; 
\item  $\sup_{t,x} |f(t,x,0,0)| < \infty$ a.s.\ and $ \sup_{t\in[0,T]} \|f(t,\cdot, 0,0)\|_{L^p} < \infty$.
\end{itemize}
\end{assumption}  
  
\section{Alternative representation for the BSDE}\label{sc: alternative BSDE}  
In this section we propose an alternative representation for the BSDE \eqref{eq: BSDE classical form} which turns out to be  well-suited for BSDEs with rough drivers and it is  equivalent to the one above if the driver is smooth, see Proposition \ref{pr: equivalence of integral BSDE formulation} below.

Let  $W=(W_t)_t$ be a $d$-dimensional Brownian motion equipped with its canonical filtration $\mathcal F= (\mathcal F_t)_t $.  
To be able to consider rough drivers, the main term in \eqref{eq: BSDE classical form} that needs  to be (re)defined is the  integral $ \int_t^T Z_r b(r, W_r) \mathrm d r$. Here we recall that $b$ is a column $ \mathbb R^d$-vector  and $Z\in \mathbb R^{d\times d}$ so that the integral is a column vector.
We introduce the following integral operator.
\begin{defin}\label{def: operator A YW in C}
Let   $Y=(Y_t)_t$ be a continuous $\mathbb R^d$-valued stochastic process such that the  $(d \times d)$-covariation matrix
$[W,Y]$ exists and all the components have finite variation.

The \emph{integral operator $ A^{W,Y}$} is defined on the space $ C_c ([0,T]\times\mathbb R^d;\mathbb R^d)$ by 
   \begin{equation*}
     \begin{array}{lrcl}  
 A^{W,Y}: &  C_c ([0,T]\times\mathbb R^d;\mathbb R^d) &\to & \mathcal C\\
 & l & \mapsto &   A^{W,Y}_\cdot(l) ,
  \end{array}
   \end{equation*}
   where 
   \begin{equation}\label{eq: operator A YW in C} 
   A^{W,Y}_t(l) := \left(\int_0^t l^*(r, W_r) \mathrm d [W,Y]_r \right)^*
     \end{equation}
      for all $t\in[0,T]$. Here $l$  {and $A^{W,Y}_t(l)$} are $d$-dimensional  column vectors.
\end{defin}
{We observe that in the special case when $Y=W$ the occupation time operator $A^{W,W}$ applied to $l$ is nothing but  $\int_0^\cdot l(r, W_r)\mathrm d r$  (see the introduction of Section \ref{ssc: preliminaries AYW} for more details).}
Moreover, for the class of functions $l\in   C_c ([0,T]\times\mathbb R^d;\mathbb R^d)$ the integral in \eqref{eq: operator A YW in C} is well-defined because $[W,Y]$ is a matrix with finite variation components by assumption. Our aim is to define such integral operator $A^{W,Y}$ for generalised functions, as specified in the next definition.

\begin{defin}\label{def: operator A YW in H general}
{Let $E$ be a  Polish space  which 
contains $C_c ([0,T]\times\mathbb R^d;\mathbb R^d)$
as a dense subset.} We define the integral operator $A^{W,Y}: E \rightarrow \mathcal C $ as the continuous extension of the operator defined in Definition \ref{def: operator A YW in C}, provided that the extension exists. 
\end{defin}
In Section \ref{sc: markovian case} we will prove the existence of such extension {for $E =  C([0,T]; H^{s}_{r})$ with $2\leq r<\infty$ and $-\frac12<s\leq 0$.}   Using this extension we can reformulate BSDE \eqref{eq: BSDE classical form} for a rough driver and give a precise meaning to its solution.

\begin{defin}\label{def: solution BSDE integral A}
Let $b \in C([0,T];{\mathcal S}')$.
Let $E$ be a Polish space of {${\mathcal S}'$}-valued functions
 including  $C_c ([0,T]\times\mathbb R^d;\mathbb R^d)$
as a dense subset and such that $b \in E$.
We say that a  continuous $\mathbb R^d$-valued stochastic process $Y$ is a solution of  BSDE \eqref{eq: BSDE classical form} if:
\begin{itemize}
\item[(i)]  $A^{W,Y}$ exists as an operator according to Definition \ref{def: operator A YW in H general};
\item[(ii)] $A_\cdot^{W,Y}(b)$ is a  martingale-orthogonal  process; 
\item[(iii)] $Y_T=\xi$;
\item[(iv)] the process $M= (M_t)_t$ given by
 \begin{equation}\label{eq: BSDE integral A}
M_t:= Y_t -Y_0 +A^{W,Y}_t(b)+\int_0^t f\left (r, W_r, Y_r, \frac{\mathrm d[Y,W]_r}{\mathrm dr}\right) \mathrm dr
 \end{equation} 
 is a  square-integrable $\mathcal F$-martingale, where $\mathcal F$ is the Brownian filtration.
\end{itemize}
\end{defin}

\begin{rem}\begin{itemize}
\item Such solution $Y$ is a weak-Dirichlet process in the sense of Definition \ref{def:weak_dirichlet_process} with  martingale-orthogonal  process $A$ given by $$A^{W,Y}_t(b)+\int_0^t f\left (r, W_r, Y_r, \frac{\mathrm d[Y,W]_r}{\mathrm dr}\right) \mathrm dr  .$$
\item We have $[Y,W]= [M,W]$, thus the covariation process is absolutely continuous with respect to $\mathrm dr$ component by component and hence all terms appearing in the driver $f$ in \eqref{eq: BSDE integral A} are well-defined.
\item Definition \ref{def: solution BSDE integral A} above makes sense also in the case when $\xi$ is a generic square integrable random variable and the random dependence  in the driver $f$ is allowed to be on the whole past $\{W_s; s\leq r\}$ instead of only on $W_r$.
\item   Another  generalization of Pardoux-Peng BSDEs that allows the solution $Y$ not to be a semimartingale appeared for example in \cite{cheriditoNam2017}, where the authors introduce and study generalised backward differential equations. In their formulation of BSDE (see \cite[Definitions 2.1 and 2.2]{cheriditoNam2017}) they consider a functional $F_t(Y,M)$ for every adapted cadlag process $Y$ and $L^p$-martingale $M$,  which in general may not be a semimartingale. In our setting, the object corresponding to this functional  would be $\left(\int_0^t b^*(r, W_r)\mathrm d[Y,W]_r\right)^*$.  However this integral is not defined for every cadlag adapted process $Y$, since $b$ is a distribution and the covariation $[Y,W]$ is not well-defined a priori, hence that setting cannot be used here. 
\end{itemize}
\end{rem}

In the next proposition we see how  the classical  formulation  of the BSDE is  equivalent to the one introduced above if   $b\in   C_c ([0,T]\times\mathbb R^d;\mathbb R^d)$. In   this case clearly $A^{W,Y}$ is itself the trivial extension {to $E=C_c ([0,T]\times\mathbb R^d;\mathbb R^d) $} of the operator introduced in Definition \ref{def: operator A YW in C}.
\begin{prop}\label{pr: equivalence of integral BSDE formulation}
Let $Y$ be a $d$-dimensional adapted process and $b\in   C_c ([0,T]\times\mathbb R^d;\mathbb R^d) $. Then $Y$ is a solution of \eqref{eq: BSDE classical form} according to Definition \ref{def: solution BSDE integral A} with respect to some $E$ if and only if there exists a predictable $(d\times d)$-dimensional process $Z$ such that $(Y,Z)$ is a  solution of BSDE \eqref{eq: BSDE classical form} in the classical sense.
\end{prop}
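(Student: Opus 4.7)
The plan is to prove both implications of the equivalence by exploiting the identification $Z_r = \mathrm d[Y,W]_r/\mathrm dr$ and the Brownian martingale representation theorem.

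First I would show the forward direction ($\Rightarrow$). Let $Y$ satisfy Definition \ref{def: solution BSDE integral A} with respect to any admissible $E$ that contains the smooth $b$. From (iv), the process
\[
M_t = Y_t - Y_0 + A^{W,Y}_t(b) + \int_0^t f\!\left(r,W_r,Y_r,\tfrac{\mathrm d[Y,W]_r}{\mathrm dr}\right)\mathrm dr
\]
is a square-integrable $\mathcal F$-martingale. Since $\mathcal F$ is the Brownian filtration, the martingale representation theorem yields a predictable $(d\times d)$-valued process $Z$ with $\mathbb E\int_0^T|Z_r|^2\mathrm dr<\infty$ such that $M_t=\int_0^t Z_r\,\mathrm dW_r$. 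Then $[M,W]_{ij,t}=\int_0^t Z^{ij}_r\,\mathrm dr$. The two integral terms in the expression for $M_t$ have bounded variation (here I use that $b$ is continuous with compact support), hence $[Y,W]_t=[M,W]_t$, so $\mathrm d[Y,W]_r/\mathrm dr = Z_r$.

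Next I compute $A^{W,Y}_t(b)$ in terms of $Z$. Using $d[W,Y]_r = Z_r^*\,\mathrm dr$ I get
\[
A^{W,Y}_t(b) = \left(\int_0^t b^*(r,W_r)\,Z_r^*\,\mathrm dr\right)^{\!*} = \int_0^t Z_r\, b(r,W_r)\,\mathrm dr,
\]
componentwise verified via $(Z_r b(r,W_r))_j=\sum_k Z^{jk}_r b^k(r,W_r)$. Plugging back and rearranging the defining relation for $M$, and using $Y_T=\xi$ (condition (iii)), gives
\[
Y_t = \xi + \int_t^T Z_r\,b(r,W_r)\,\mathrm dr + \int_t^T f(r,W_r,Y_r,Z_r)\,\mathrm dr - \int_t^T Z_r\,\mathrm dW_r,
\]
which is the classical Pardoux--Peng formulation.

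For the reverse direction ($\Leftarrow$), assume $(Y,Z)$ satisfies the classical BSDE \eqref{eq: BSDE classical form}. Then $Y$ is a continuous semimartingale with martingale part $\int_0^\cdot Z_r\,\mathrm dW_r$, so $[Y,W]_t=\int_0^t Z_r\,\mathrm dr$ exists with bounded variation components, and $\mathrm d[Y,W]_r/\mathrm dr = Z_r$. The same computation as before yields $A^{W,Y}_t(b)=\int_0^t Z_r\,b(r,W_r)\,\mathrm dr$, which is well-defined by Definition \ref{def: operator A YW in C}, giving (i). Since this process is of bounded variation, its covariation with every continuous local martingale vanishes, hence it is martingale-orthogonal, giving (ii). Condition (iii) is the terminal constraint. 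Finally, substituting these identities into the expression of $M_t$ in (iv) collapses it to $\int_0^t Z_r\,\mathrm dW_r$, which is a square-integrable $\mathcal F$-martingale by the standard integrability of $Z$ for a classical solution.

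There is no genuine obstacle here: the argument is essentially bookkeeping once one notices that smoothness of $b$ makes $A^{W,Y}_\cdot(b)$ bounded variation, so the Doob--Meyer/weak-Dirichlet decomposition of $Y$ coincides with its semimartingale decomposition. The only mildly delicate point is making sure the tensor covariation conventions line up so that $A^{W,Y}_t(b)$ agrees with $\int_0^t Z_r b(r,W_r)\,\mathrm dr$; this is purely a transpose bookkeeping exercise.
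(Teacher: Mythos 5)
Your proof is correct and follows essentially the same route as the paper: in one direction compute $[W,Y]_t=\int_0^t Z_r^*\,\mathrm dr$ from the classical equation and identify $A^{W,Y}_\cdot(b)=\int_0^\cdot Z_r b(r,W_r)\,\mathrm dr$ to check (i)--(iv) with the trivial extension $E=C_c$, and in the other use the martingale representation theorem for $M$ and the vanishing covariation of the drift terms with $W$ to recover $Z$ and the Pardoux--Peng formulation. The only cosmetic difference is that you deduce $[Y,W]=[M,W]$ from the bounded variation of $A^{W,Y}(b)$ (valid since $b$ is smooth), while the paper invokes the martingale-orthogonality in condition (ii); both arguments are sound and interchangeable here.
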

 
\begin{proof}
Suppose that $(Y,Z)$ is a classical solution  of \eqref{eq: BSDE classical form}. We set $E=  C_c ([0,T]\times\mathbb R^d;\mathbb R^d) $.  This ensures that  point (i) of Definition \ref{def: solution BSDE integral A} is satisfied and $A^{W,Y}_\cdot(b)= \left( \int_0^\cdot  b^*(r, W_r)\mathrm d[W,Y]_r \right)^*$.
Using \eqref{eq: BSDE classical form} we have 
\begin{align*}
&[W,Y]_t \\ 
&= \left[ W , Y_0 -\int_0^\cdot Z_r b(r, W_r)\mathrm d r - \int_0^\cdot f(r, W_r, Y_r, Z_r)  \mathrm d r + \int_0^\cdot Z_r \mathrm d W_r \right]_t,
\end{align*}
where the covariation is a matrix and it is calculated component by component. Clearly the only non-zero term is given by the stochastic integral and so   we get  
\begin{align*}
[W,Y]_t &= \left[W, \int_0^\cdot Z_r \mathrm d W_r   \right] _t 
= \int_0^t  Z_r^* \mathrm d r,
\end{align*}
hence $\mathrm d [W,Y]_r = Z_r^*  \mathrm d r $, and in particular 
$$ A^{W,Y}_\cdot(b)=\left(\int_0^\cdot b^*(r, W_r)Z_r^* \mathrm d r\right)^* = \int_0^\cdot Z_r b(r, W_r) \mathrm d r .$$
 Being of bounded variation, the latter is clearly a martingale-orthogonal  process, which is point (ii) in Definition \ref{def: solution BSDE integral A}. Point (iii) is trivial. Point (iv) is also satisfied because 
\[
Y_t -Y_0 +A^{W,Y}_t(b)+\int_0^t f\left (r, W_r, Y_r, \frac{\mathrm d[Y,W]_r}{\mathrm dr}\right) \mathrm dr = \int_0^t Z_r \mathrm dW_r
\]
and the right-hand side is a square integrable $\mathcal F$-martingale. 

Conversely, let $Y$ be a  solution of \eqref{eq: BSDE classical form} according to Definition \ref{def: solution BSDE integral A} with respect to $E$.
We know that  $$M_t: = Y_t -Y_0 +A^{W,Y}_t(b)+ \int_0^t f (r, W_r, Y_r, \frac{\mathrm d[Y,W]_r}{\mathrm dr})\mathrm dr $$ is a 
square integrable martingale by point (iv) in Definition \ref{def: solution BSDE integral A}, hence by the martingale representation theorem  there exists a square-integrable process $Z$ such that $M_t = \int_0^t Z_r \mathrm d W_r$. Moreover $A^{W,Y}$ is a martingale-orthogonal  process by point (ii), thus $[W,Y]_t  =[W,M]_t = \int_0^t Z^*_r \mathrm d r$. Therefore $ A^{W,Y}_t(b) = ( \int_0^t b^*(s, W_s) \mathrm d [W,Y]_s)^* = \int_0^t Z_s b(s, W_s)\mathrm d s $ and this concludes the proof.
\end{proof}
\begin{rem}
We observe that, in the classical formulation of BSDEs, $Z$ is always directly determined by $Y$ since $\frac{\mathrm d}{\mathrm dt}[Y,W]_t =Z_t$.
\end{rem}

To conclude this section we point out that the new setting and formulation introduced in Definition \ref{def: solution BSDE integral A} in fact coincide with the classical ones even in the case when $b\in L^\infty_{\text{loc}} ([0,T]\times \mathbb R^d; \mathbb R^d)$. This can be seen by observing two facts. The first one is that a BSDE with a driver $b\in L^\infty_{\text{loc}} ([0,T]\times \mathbb R^d; \mathbb R^d)$ makes sense without the introduction of the operator $A$ and  can be studied with classical methods (\emph{\`a la} Pardoux-Peng). 
On the other hand we will show (see Theorem \ref{thm: A(b)=int b for Lp functions}) that  the operator $A^{W,W}$ applied to a driver in $C([0,T]; H^{s}_r)\cap L^\infty_{\text{loc}} ([0,T]\times \mathbb R^d; \mathbb R^d)$ for $-1/2<s\leq 0$ and $2\leq r< \infty$ is compatible with integrals of drivers  in $L^\infty_{\text{loc}} ([0,T]\times \mathbb R^d; \mathbb R^d)$ defined classically.
 Hence the framework presented here coincides   with the classical one   not only for  $b\in   C_c ([0,T]\times\mathbb R^d;\mathbb R^d)$ (as shown in Proposition \ref{pr: equivalence of integral BSDE formulation}) but also  for  $b\in L^\infty_{\text{loc}} ([0,T]\times \mathbb R^d; \mathbb R^d)$.

\section{Analytical PDE results}\label{sc: PDE results}
In  this section we collect and prove some results about several PDEs that will be used in Section \ref{sc: markovian case}. In particular, a key point in the subsequent analysis will be to show that the integral operator $A^{Y, W}$ appearing in \eqref{eq: BSDE integral A} is well-defined for suitable generalised functions and this will be done with the aid of the following auxiliary PDEs and relative results.

The parameters $\beta$ and $q$ are fixed and chosen according to Assumption \ref{ass: parameters}. These are directly linked to the regularity of the rough driver $b$. Moreover the parameters $(\delta,p)$ are chosen in $K(\beta,q)$ and in particular $\frac d \delta < p<q$.  

\vspace{5pt}
The first auxiliary PDE is 
\begin{equation}\label{eq: PDE for phi}
\left\{\begin{array}{l}
\partial_t \phi +\frac12 \Delta \phi = l\\
\phi(T) = \Psi,
\end{array} \right.
\end{equation}
where $\Psi\in H^{1+\delta}_p$ and $l\in C([0,T]; H^{-\beta}_{p})$.  Here the Laplacian $\Delta$ acts on $\phi$ componentwise and the resulting object is a vector with $i$-th component given by $\Delta \phi_i$. With a  slight abuse of notation we  use $\Delta \phi$ for the whole vector. 
We consider the  mild formulation of \eqref{eq: PDE for phi} which is given by
\begin{equation}\label{eq: mild sol phi}
\phi( t) = P(T-t)\Psi + \int_t^T P(r-t) l(r)\mathrm dr,
\end{equation}
where $\{P(t), t\geq0\}$ is the semigroup generated by $\frac12 \Delta$. \\
It is known   that if a classical solution exists then it coincides with the  solution of \eqref{eq: mild sol phi} (mild formulation) and it has certain regularity properties as recalled in the lemma below for smooth $\Psi$ and $l$. For more details and a proof see for example \cite[Theorem 5.1.4, part (iv)]{lunardi95}. 
\begin{lemma}\label{lm: phi is C12}
Let $l\in  C_c^\infty ([0,T]\times\mathbb R^d;\mathbb R^d) $ and  $\Psi\in C^{2+\epsilon}(\mathbb R^d;\mathbb R^d)$ for some $0<\epsilon<1$. The solution $\phi$ to \eqref{eq: PDE for phi} is at least of class $C^{1,2+\epsilon}([0,T]\times \mathbb R^d;\mathbb R^d)$.
\end{lemma}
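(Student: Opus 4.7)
The plan is to reduce the claim to a classical Schauder-type regularity result for the forward heat equation. Since the PDE is componentwise, I would first observe that it suffices to treat each scalar component $\phi_i$ separately, so I may assume $\phi, \Psi, l$ are real-valued. Then I would perform the time reversal $\tilde\phi(t,x) := \phi(T-t,x)$, $\tilde l(t,x):=-l(T-t,x)$, which converts the backward problem \eqref{eq: PDE for phi} into the forward Cauchy problem
\begin{equation*}
\partial_t \tilde\phi - \tfrac12 \Delta \tilde\phi = \tilde l, \qquad \tilde\phi(0)=\Psi,
\end{equation*}
on $[0,T]\times\mathbb R^d$, with $\tilde l \in C_c^\infty$ and initial datum $\Psi\in C^{2+\epsilon}$.

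Next I would invoke the parabolic Schauder theory as formulated in \cite[Theorem 5.1.4]{lunardi95}. In that framework the operator $\tfrac12\Delta$ generates an analytic semigroup on the space of bounded uniformly continuous functions, and the relevant interpolation spaces between $C_b^0$ and the domain of $\Delta$ coincide (up to the usual identifications) with the little-Hölder spaces of exponent $2+\epsilon$. Since $\tilde l \in C_c^\infty([0,T]\times\mathbb R^d)$ is in particular uniformly Hölder continuous in both variables, and $\Psi \in C^{2+\epsilon}$ with $\tfrac12\Delta \Psi \in C^{0+\epsilon}$, the compatibility hypotheses of part (iv) of that theorem are met. The conclusion is that $\tilde\phi \in C^1([0,T]; C_b^0) \cap C([0,T]; C^{2+\epsilon})$ with $\partial_t \tilde\phi \in C([0,T]; C^{0+\epsilon})$, which after undoing the time reversal yields $\phi\in C^{1,2+\epsilon}([0,T]\times\mathbb R^d)$.

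The expected obstacle is purely bookkeeping: making sure that the abstract functional-analytic statement in Lunardi, phrased in terms of analytic semigroups on $C_b^0(\mathbb R^d)$ and their real interpolation spaces, does correspond pointwise to the classical Hölder regularity in the space variable uniformly in $t$. I would therefore spend one paragraph verifying that the abstract space $D_A(1+\epsilon/2,\infty)$ (or the corresponding "little Hölder" space) coincides with $C^{2+\epsilon}(\mathbb R^d)$ and that $\Psi$ belongs to it, together with the compatibility condition $\partial_t\tilde\phi(0,\cdot)=\tfrac12\Delta\Psi+\tilde l(0,\cdot)\in C^{0+\epsilon}$, which follows from the hypotheses on $\Psi$ and $l$. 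Apart from this identification, the statement is a direct citation, and no further delicate estimate is required.
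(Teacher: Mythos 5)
Your proposal is correct and follows essentially the same route as the paper, which does not give a detailed argument but simply refers to \cite[Theorem 5.1.4, part (iv)]{lunardi95} for this regularity statement. Your additional bookkeeping (componentwise reduction, time reversal to the forward Cauchy problem, the compatibility condition $\tfrac12\Delta\Psi+\tilde l(0,\cdot)\in C^{0+\epsilon}$, and the identification of $D_A(1+\epsilon/2,\infty)$ with $C^{2+\epsilon}(\mathbb R^d)$) is exactly what is implicitly needed to apply that theorem, so there is no gap.
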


In the general case that suits our framework (i.e.\ for rough $l$s and $\Psi$ in fractional Sobolev spaces)  we have the following results.
\begin{lemma}\label{lm: continuity of semigroup}
Let $\beta,\delta,p $ and $q$ be chosen according to Assumption \ref{ass: parameters}. 
\begin{itemize}
\item[(i)] If $\Psi \in H_p^{1+\delta}$ then $t\mapsto P(T-t)\Psi$ is a continuous function with values in $ H_p^{1+\delta}$.
\item[(ii)] If  $l\in C\left( [ 0,T]; H^{-\beta}_p \right)  $  then   the function  $ t \mapsto \int_{t}^{T} P(s-t) l\left(  s\right) \mathrm d s$ is in  $C^{\gamma}\left(  \left[  0,T\right]  ;   H^{2-2\epsilon-\beta }_p \right)  $ for every $\epsilon>0$ and $\gamma\in\left(
0,\epsilon\right)  $.\\
In particular one can always choose $\epsilon $ such that $2-2\epsilon-\beta = 1+\delta$.
\end{itemize}
\end{lemma}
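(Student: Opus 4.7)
For part (i), the plan is to reduce continuity in $H^{1+\delta}_p$ to strong continuity in $L^p$ by conjugating through the fractional power $A^{(1+\delta)/2}$ (where $A = I - \tfrac12 \Delta$). Since $H^{1+\delta}_p = D(A^{(1+\delta)/2})$, for any $0\le t_1,t_2\le T$,
\begin{equation*}
\|P(T-t_1)\Psi-P(T-t_2)\Psi\|_{H^{1+\delta}_p}
= \|A^{(1+\delta)/2}[P(T-t_1)-P(T-t_2)]\Psi\|_{L^p}.
\end{equation*}
Because $A^{(1+\delta)/2}$ commutes with $S(t)$ (see \cite[Chapter 2, Thm 6.13(b)]{pazy83}) and hence with $P(t) = e^t S(t)$, the right-hand side equals $\|[P(T-t_1)-P(T-t_2)] A^{(1+\delta)/2}\Psi\|_{L^p}$, and this tends to $0$ as $t_1\to t_2$ by strong continuity of the heat semigroup $P$ on $L^p$, since $A^{(1+\delta)/2}\Psi\in L^p$.

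For part (ii), fix $0\le t_1<t_2\le T$ and split
\begin{equation*}
\int_{t_1}^T P(s-t_1) l(s)\,\mathrm ds - \int_{t_2}^T P(s-t_2) l(s)\,\mathrm ds = I_1+I_2,
\end{equation*}
where $I_1:=\int_{t_1}^{t_2} P(s-t_1) l(s)\,\mathrm ds$ and $I_2:=\int_{t_2}^{T}[P(s-t_1)-P(s-t_2)] l(s)\,\mathrm ds$. The key tool is the analytic-semigroup bound analogous to \eqref{eq: mapping prop Pt}, namely
\begin{equation*}
\|P(\tau)w\|_{H^{2-2\epsilon-\beta}_p}\le C\, e^{\tau}\tau^{-(1-\epsilon)}\|w\|_{H^{-\beta}_p},
\end{equation*}
which follows from the standard estimate $\|A^\alpha S(\tau)\|_{\mathcal L(L^p)}\le C\tau^{-\alpha}$ (\cite[Chapter 2, Thm 6.13(c)]{pazy83}) combined with the commutation of $A^{\beta/2}$ with $S(\tau)$ and the isomorphism property of fractional powers. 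Applying this bound to $I_1$ with $\|l(s)\|_{H^{-\beta}_p}\le \|l\|_{C([0,T];H^{-\beta}_p)}$ yields
\begin{equation*}
\|I_1\|_{H^{2-2\epsilon-\beta}_p}\le C\|l\|_{C([0,T];H^{-\beta}_p)}\int_{t_1}^{t_2}(s-t_1)^{-(1-\epsilon)}\mathrm ds \le C(t_2-t_1)^\epsilon.
\end{equation*}

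For $I_2$, I rewrite $P(s-t_1)-P(s-t_2) = [P(t_2-t_1)-I]P(s-t_2)$ and use the analytic-semigroup estimate $\|(P(\tau)-I)v\|_{H^\sigma_p}\le C\tau^\gamma\|v\|_{H^{\sigma+2\gamma}_p}$ (valid for $0<\gamma<1$, obtained from $\|(S(\tau)-I)A^{-\gamma}\|_{\mathcal L(L^p)}\le C\tau^\gamma$ together with conjugation by $A^{\sigma/2}$). Applied with $\sigma = 2-2\epsilon-\beta$ this gives
\begin{equation*}
\|[P(t_2-t_1)-I]P(s-t_2)l(s)\|_{H^{2-2\epsilon-\beta}_p}\le C(t_2-t_1)^\gamma\|P(s-t_2)l(s)\|_{H^{2-2(\epsilon-\gamma)-\beta}_p},
\end{equation*}
and another application of the mapping bound (with $\epsilon$ replaced by $\epsilon-\gamma$, which requires $\gamma<\epsilon$) yields $C(t_2-t_1)^\gamma (s-t_2)^{-(1-\epsilon+\gamma)}\|l(s)\|_{H^{-\beta}_p}$. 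Integrating over $s\in(t_2,T)$ produces a finite factor $(T-t_2)^{\epsilon-\gamma}/(\epsilon-\gamma)$, so that $\|I_2\|_{H^{2-2\epsilon-\beta}_p}\le C(t_2-t_1)^\gamma$. Since $\gamma<\epsilon$, the estimate on $I_1$ is dominated by that on $I_2$ for small increments, yielding the claimed $C^\gamma$ regularity. The final observation that one may pick $\epsilon$ with $2-2\epsilon-\beta = 1+\delta$ is a direct algebraic choice, feasible thanks to $\beta+\delta<1$ (guaranteed by Assumption \ref{ass: parameters}).

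The main obstacle is the bookkeeping around the two semigroup estimates (gain of spatial regularity at the cost of a temporal singularity, versus loss of regularity to obtain Hölder continuity in time) and verifying that the commutation of fractional powers of $A$ with $S(t)$ may be used freely on the relevant domains; once these analytic-semigroup facts are in place, the proof reduces to a routine splitting argument.
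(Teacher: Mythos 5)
Your proof is correct. Part (i) is essentially identical to the paper's argument: both reduce continuity in $H^{1+\delta}_p$ to strong continuity of the heat semigroup on $L^p$ via the identity $\|u\|_{H^{1+\delta}_p}=\|A^{(1+\delta)/2}u\|_{L^p}$ and the commutation of $A^{(1+\delta)/2}$ with $S(t)$ (hence with $P(t)=e^tS(t)$).

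For part (ii) you take a genuinely different route: the paper simply invokes \cite[Proposition 11]{flandoli_et.al14} after the time reversal $t\mapsto T-t$, whereas you give a self-contained proof by splitting the increment into $I_1=\int_{t_1}^{t_2}P(s-t_1)l(s)\,\mathrm ds$ and $I_2=\int_{t_2}^{T}[P(t_2-t_1)-I]P(s-t_2)l(s)\,\mathrm ds$, estimating $I_1$ with the smoothing bound $\|P(\tau)w\|_{H^{2-2\epsilon-\beta}_p}\le Ce^\tau\tau^{-(1-\epsilon)}\|w\|_{H^{-\beta}_p}$ (which is exactly \eqref{eq: mapping prop Pt} with $1+\delta$ replaced by $2-2\epsilon-\beta$) and $I_2$ with the complementary estimate $\|(P(\tau)-I)v\|_{H^{\sigma}_p}\le C\tau^{\gamma}\|v\|_{H^{\sigma+2\gamma}_p}$; the trade-off between the two produces the integrable singularity $(s-t_2)^{-(1-\epsilon+\gamma)}$ precisely because $\gamma<\epsilon$, which is where the restriction in the statement comes from. (Two small points worth noting: your write-up silently assumes $\epsilon<1$ in the smoothing bound and $\gamma\le 1$ in the $(P(\tau)-I)$ bound, which is harmless since the case actually used is $2-2\epsilon-\beta=1+\delta$, i.e.\ $\epsilon=\tfrac{1-\delta-\beta}{2}<\tfrac12$; and when passing from $S$ to $P$ in the $(P(\tau)-I)$ estimate one picks up the extra term $(e^\tau-1)\|v\|$, which is $O(\tau)$ and thus absorbed.) Your approach buys transparency — it makes explicit which analytic-semigroup facts are needed and why the parameter constraints arise — at the cost of redoing the bookkeeping that the paper delegates to the cited result; the paper's approach is shorter and keeps the argument aligned with the reference it already relies on for \eqref{eq: mapping prop Pt}.
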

\begin{proof}
Item (i) follows from three facts: 1.\ well-known continuity of the heat semigroup $S(t)=e^{-t}P(t)$ in $L^p$; 2.\ continuity of $S(t)$ in $ H_p^{s}$ for all $s\geq0$, which follows from the fact that $A^{s/2}$ commutes with $S(t)$ (see \cite[Chapter 2, Thm 6.13 (b)]{pazy83}) so that one has $\|S(t)w - S(t_0)w\|_{H^s_p} = \|S(t)A^{s/2}w - S(t_0)A^{s/2}w\|_{L^p}$ for each $t_0\in[0,T]$; 3. the link between $S(t)$ and $P(t)$ via the continuous scalar function $e^t$ so that $P(t) = e^t S(t)$ is still continuous in $H^s_p$ as a function of $t$.

Item (ii) follows by first applying  \cite[Proposition 11]{flandoli_et.al14} with  the time $t$ replaced by $T-t$ and then making a change of  time to the resulting integral to get a backward integral, namely transforming the integrator variable $r$ into $s=t-r$.
\end{proof}

\begin{lemma}\label{lm: phi is C01}
Let Assumption \ref{ass: parameters} hold, and let   $\Psi \in H_p^{1+\delta}$ and $l\in C ( [0,T]; H^{-\beta}_{p}) $. The expression $\phi $   given in 
\eqref{eq: mild sol phi}  is well-defined and belongs to $C([0,T];  H_p^{1+\delta})\subset C([0,T];  C^{1+\alpha})$ and to $C^{0,1}$, where $\alpha = \delta-d/p$. Moreover we have 
\[
\| \phi(t)\|_{H_p^{1+\delta}} \leq  \| \Psi\|_{ H_p^{1+\delta}} +  (T-t)^{\frac{1-\delta-\beta}{2}} \|l\|_{C([0,T];H^{-\beta}_{p}) }
\]
and
\[ 
\|\phi\|_{C([0,T]; C^{1+\alpha})}\leq c\|\phi\|_{C([0,T]; H^{1+\delta}_p)} .
\]
\end{lemma}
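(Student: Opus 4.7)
The plan is to estimate the two terms in the mild formulation \eqref{eq: mild sol phi} separately in the $H^{1+\delta}_p$ norm, assemble them into the claimed bound, use Lemma \ref{lm: continuity of semigroup} to upgrade these pointwise-in-$t$ estimates to continuity in $t$, and then invoke the fractional Morrey inequality (Lemma \ref{lm: fractional Morrey ineq}) together with a joint-continuity argument to obtain the $C^{0,1}$ regularity.

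For the deterministic part, I apply the contraction property \eqref{eq: P contraction} to get $\|P(T-t)\Psi\|_{H^{1+\delta}_p} \le \|\Psi\|_{H^{1+\delta}_p}$, while continuity of $t\mapsto P(T-t)\Psi$ in $H^{1+\delta}_p$ follows from Lemma \ref{lm: continuity of semigroup}(i). For the integral term, I use the smoothing estimate \eqref{eq: mapping prop Pt}, which under Assumption \ref{ass: parameters} applies because $\delta+\beta<1$, to obtain
\[
\bigl\|P(r-t)l(r)\bigr\|_{H^{1+\delta}_p} \le C e^{r-t}(r-t)^{-\frac{1+\delta+\beta}{2}}\|l(r)\|_{H^{-\beta}_p}.
\]
Since $\frac{1+\delta+\beta}{2}<1$, the resulting singularity is integrable, and after pulling out $\|l\|_{C([0,T];H^{-\beta}_p)}$, the computation
\[
\int_t^T (r-t)^{-\frac{1+\delta+\beta}{2}}\,\mathrm{d}r = \frac{2}{1-\delta-\beta}\,(T-t)^{\frac{1-\delta-\beta}{2}}
\]
gives a bound of the stated form (absorbing the numerical constants). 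The Bochner integral is therefore well-defined as an element of $H^{1+\delta}_p$, and continuity in $t$ with values in $H^{1+\delta}_p$ is precisely Lemma \ref{lm: continuity of semigroup}(ii) once one chooses $\epsilon=\tfrac{1-\delta-\beta}{2}$ so that $2-2\epsilon-\beta=1+\delta$. Combining the two pieces yields $\phi\in C([0,T];H^{1+\delta}_p)$ together with the first displayed inequality.

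For the embedding step, Assumption \ref{ass: parameters} guarantees $d/\delta<p$, so Lemma \ref{lm: fractional Morrey ineq} applies pointwise in $t$ and yields
\[
\|\phi(t)\|_{C^{1+\alpha}}\le c\,\|\phi(t)\|_{H^{1+\delta}_p}
\]
with $\alpha=\delta-d/p$; taking the supremum over $t\in[0,T]$ gives the second displayed inequality and, via continuity in $H^{1+\delta}_p$, the embedding $C([0,T];H^{1+\delta}_p)\subset C([0,T];C^{1+\alpha})$. Finally, to conclude that $\phi\in C^{0,1}$ I combine uniform convergence in $x$ of $\phi(t_n,\cdot)\to\phi(t,\cdot)$ and of $\nabla\phi(t_n,\cdot)\to\nabla\phi(t,\cdot)$ (granted by continuity in the $C^{1+\alpha}$ norm) with the H\"older continuity in $x$ for each fixed $t$ through the standard triangle-inequality splitting
\[
|\phi(t_n,x_n)-\phi(t,x)|\le \|\phi(t_n)-\phi(t)\|_{L^\infty}+|\phi(t,x_n)-\phi(t,x)|,
\]
and likewise for $\nabla\phi$. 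This gives joint continuity of $\phi$ and $\nabla\phi$, which is exactly what $C^{0,1}$ membership requires.

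The only technical subtlety is the integrability of the singular kernel $(r-t)^{-(1+\delta+\beta)/2}$ at $r=t$; this is precisely where the restriction $\delta<1-\beta$ built into Assumption \ref{ass: parameters} is used. Everything else is a routine application of semigroup estimates and Sobolev embedding.
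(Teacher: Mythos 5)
Your proof is correct and follows the paper's own argument essentially line for line: the contraction property \eqref{eq: P contraction} for the $P(T-t)\Psi$ term, the smoothing estimate \eqref{eq: mapping prop Pt} with the integrable singularity $(r-t)^{-(1+\delta+\beta)/2}$ for the convolution term, Lemma \ref{lm: continuity of semigroup} for continuity in $t$, and the fractional Morrey inequality (Lemma \ref{lm: fractional Morrey ineq}) for the $C^{1+\alpha}$ bound. The only difference is that you spell out the final passage from $C([0,T];C^{1+\alpha})$ to $C^{0,1}$ via a triangle-inequality splitting, where the paper instead delegates this step to a cited lemma in an earlier reference.
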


\begin{proof}
For the first term in \eqref{eq: mild sol phi} we have that $t \mapsto P(T-t) \Psi \in H_p^{1+\delta}$ is continuous by Lemma \ref{lm: continuity of semigroup}, item (i). 
Moreover by  \eqref{eq: P contraction} we have $ \|P(T-t) \Psi \|_{H_p^{1+\delta}}\leq    \| \Psi\|_{ H_p^{1+\delta}}$ for all $t\in[0,T]$. 
For the second term in \eqref{eq: mild sol phi}  we have continuity as a function of time by  Lemma \ref{lm: continuity of semigroup}, item (ii) and again by the  mapping property \eqref{eq: mapping prop Pt} of the semigroup in Sobolev spaces we  get the bound
\begin{align*}  
\left \| \int_t^T P(s-t) l(s)\mathrm ds\right \|_{H_p^{1+\delta}}  & 
\leq c e^T \int_t^T  (s-t)^{-\frac{1+\delta+\beta}{2}} \|l(s)\|_{H^{-\beta}_{p}} \mathrm ds\\
& \leq c (T-t)^{\frac{1-\delta-\beta}{2}} \|l\|_{C([0,T];H^{-\beta}_{p})},
\end{align*}
which ensures that $\phi \in C([0,T];  H_p^{1+\delta})$ since $1-\delta-\beta>0 $ by assumption on the parameters. 
 Moreover  $\delta>d/p$ again by Assumption \ref{ass: parameters} and so by the fractional Morrey inequality (Lemma \ref{lm: fractional Morrey ineq}) we have 
$$ \| \phi(t) \|_{C^{1+\alpha}} \leq c \|\phi (t)\|_{H_p^{1+\delta}}.$$
 Hence taking the supremum over $t\in[0,T]$ we get
$$
\|\phi\|_{C([0,T]; C^{1+\alpha})}\leq c\|\phi\|_{C([0,T]; H^{1+\delta}_p)} .
$$
 From this it follows  that the solution $\phi$ is jointly continuous in $t$ and $x$ and once differentiable in $x$, namely $\phi\in C^{0,1}$ as wanted (for a proof of a result  similar to the last claim see
 \cite[Lemma 21]{flandoli_et.al14}).
\end{proof} 
 The following corollary follows from Lemma \ref{lm: phi is C01} by the linearity of the PDE.
\begin{coroll}\label{cor: convergence of phi_n in C01}
 Let Assumption \ref{ass: parameters} hold. Let  $(l_n)_n \subset C([0,T]; H^{-\beta}_{p})$ be a sequence such that $l_n\to l$ in this space  and let $\Psi_n\to \Psi $ in $H_p^{1+\delta}$ with $(\Psi_n)_n\subset H_p^{1+\delta}$.  Let $\phi_n$    denote the solution of \eqref{eq: PDE for phi} with $l_n$  in place of $l$ and  $\Psi_n$ in place of $ \Psi $. Then $\phi_n\to \phi$ in $C^{0,1}$.
\end{coroll}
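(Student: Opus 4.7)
The plan is to exploit linearity of the PDE \eqref{eq: PDE for phi} together with the quantitative bounds already established in Lemma \ref{lm: phi is C01}. First I would observe that the differences $\psi_n := \phi_n - \phi$ are precisely the mild solutions of \eqref{eq: PDE for phi} with source term $l_n - l \in C([0,T]; H^{-\beta}_p)$ and terminal condition $\Psi_n - \Psi \in H^{1+\delta}_p$, namely
\begin{equation*}
\psi_n(t) = P(T-t)(\Psi_n - \Psi) + \int_t^T P(r-t)(l_n(r) - l(r))\,\mathrm dr.
\end{equation*}

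Next I would apply Lemma \ref{lm: phi is C01} directly to $\psi_n$. This yields
\begin{equation*}
\|\psi_n(t)\|_{H^{1+\delta}_p} \le \|\Psi_n - \Psi\|_{H^{1+\delta}_p} + (T-t)^{\frac{1-\delta-\beta}{2}} \|l_n - l\|_{C([0,T]; H^{-\beta}_p)},
\end{equation*}
and therefore, taking the supremum over $t \in [0,T]$,
\begin{equation*}
\|\psi_n\|_{C([0,T]; H^{1+\delta}_p)} \le \|\Psi_n - \Psi\|_{H^{1+\delta}_p} + T^{\frac{1-\delta-\beta}{2}} \|l_n - l\|_{C([0,T]; H^{-\beta}_p)}.
\end{equation*}
By hypothesis both terms on the right-hand side tend to zero, so $\phi_n \to \phi$ in $C([0,T]; H^{1+\delta}_p)$.

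To upgrade this to convergence in $C^{0,1}$, I would use the second estimate of Lemma \ref{lm: phi is C01}, namely $\|\psi_n\|_{C([0,T]; C^{1+\alpha})} \le c\|\psi_n\|_{C([0,T]; H^{1+\delta}_p)}$ with $\alpha = \delta - d/p > 0$ guaranteed by Assumption \ref{ass: parameters}. Hence $\phi_n \to \phi$ uniformly on $[0,T] \times \mathbb R^d$ together with $\nabla \phi_n \to \nabla \phi$ uniformly on $[0,T] \times \mathbb R^d$, which is strictly stronger than uniform convergence on compacts of $\phi_n$ and $\nabla \phi_n$, i.e.\ convergence in $C^{0,1}$ in the sense defined at the beginning of Section \ref{sc: preliminaries}. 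There is no real obstacle here: the entire argument is a one-line consequence of linearity plus the two bounds already established, and the only point to double-check is that $\alpha > 0$ so that the Morrey embedding applies, which is ensured by the standing assumption $(\delta,p) \in K(\beta,q)$.
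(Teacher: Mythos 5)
Your proposal is correct and follows exactly the route the paper intends: the paper simply remarks that the corollary follows from Lemma \ref{lm: phi is C01} by linearity of the PDE, and your argument spells this out by applying the two estimates of that lemma to the difference $\phi_n-\phi$, whose mild formulation has data $l_n-l$ and $\Psi_n-\Psi$. The final step, passing from convergence in $C([0,T];C^{1+\alpha})$ (with $\alpha=\delta-d/p>0$ by Assumption \ref{ass: parameters}) to convergence in $C^{0,1}$, is also exactly as the paper's framework requires.
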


Another important PDE that will appear in the next section is the  PDE associated to BSDE \eqref{eq: BSDE classical form} in the Markovian case, which will be used to construct the solution to the BSDE, namely
\begin{equation}\label{eq: PDE for u}
\left\{\begin{array}{l}
\partial_t u(t) +\frac12 \Delta u(t) =- \nabla u^*(t) \, b(t)- f(t,\cdot,u(t),
 \nabla u(t))\\
  u(T) = \Phi.
\end{array}\right. 
\end{equation}
We note that the term  $\Delta u $ (as in PDE \eqref{eq: PDE for phi} above) and the term $\nabla u^*\, b$ are defined componentwise, in particular the $i$-th component of $\nabla u^*\, b$ is given by $\nabla u_i^* b$. A mild solution to PDE \eqref{eq: PDE for u} is a function $u $ that satisfies
\begin{align}\label{eq: mild sol u}
u( t) =& P(T-t) \Phi - \int_t^T P(r-t) \left(\nabla u^*(r) b(r)\right)\mathrm dr \\ \nonumber
& - \int_t^T P(r-t) f(r,\cdot, u(r),\nabla u(r) )\mathrm dr
\end{align}
in an appropriate function space (specified below). Each component in the term $\nabla u^*(r) b(r)$ is defined by means of the pointwise product (recalled in Section \ref{sc: preliminaries}) and it is well-defined as an element of $H^{-\beta}_p$ when $b(t)\in H^{-\beta}_q$ and $\nabla u^*(t) \in H^\delta_p$. 

 Equation \eqref{eq: PDE for u} was first studied   in \cite{issoglio13} on a bounded domain $D\subset\mathbb R^d$ and with $f\equiv 0$. It was then solved in $\mathbb R^d$  in \cite{flandoli_et.al14} with $f=0$, and in \cite{issoglio_jing16} with $f$ non zero. 
{Related non-linear PDEs with rough coefficients have been studied with similar techniques in \cite{hinz_et.al, issoglio19, issoglio_zahle15}. We remark in particular that in \cite{issoglio19} the author applies analytical results on a quadratic rough PDE to study a quadratic rough BSDE. Ideas used there are similar to what has been done in} \cite{issoglio_jing16}, where the authors obtain an existence and uniqueness result 
 for a function $\tilde f:[0,T]\times H^{1+\delta}_p\times H^\delta_p\to H^0_p$ with some Lipschitz regularity and boundedness at 0. We want to apply this result later on, but we will need to consider   $\tilde f$ to be the \emph{same}  function $f$ appearing in BSDE \eqref{eq: BSDE classical form}. Clearly some care is needed because the $f$ appearing in the BSDE is a function of $t, x,y$ and $z$ and its regularity  stated in Assumption \ref{ass: coefficients} is given pointwise, unlike $\tilde f$. On the other hand,  to get a fixpoint  for the PDE we need some Lipschitz regularity in terms of the function spaces. The way to merge these two settings is to consider a function $\tilde f$ (which will have the appropriate Lipschitz regularity) by setting $\tilde f(t,u,v) = f(t, \cdot, u(t), \nabla u(t))$ for any $u\in H^{1+\delta}_p$ and $v\in H^\delta_p$,  with $f$ from Assumption \ref{ass: coefficients} (we will abuse the notation and write $f$ for both).  Then $\tilde f$ satisfies the required conditions, as explained in  \cite[Remark 2.5]{issoglio_jing16}, 
in particular $\tilde f$ is Lipschitz continuous in the Sobolev spaces
\begin{equation}\label{eq: f is lipschitz}
\|\tilde f(t,u,v)-\tilde f(t, u', v')\|_{H^0_p}\leq c(\|u-u'\|_{H^{1+\delta}_p}+\|v-v'\|_{H^{\delta}_p}).
\end{equation}
Theorem 5, and Lemmata 5 and 8 in \cite{issoglio_jing16} give the following existence, uniqueness and regularity result.
 \begin{theorem}[Issoglio, Jing]\label{thm: Issoglio Jing}
Under Assumption \ref{ass: parameters} and Assumption \ref{ass: coefficients} there exists a unique mild solution $u$ to \eqref{eq: PDE for u} in  $C([0,T]; H^{1+\delta}_p)$. Moreover 
  $u(t)\in C^{1+\alpha}$ for all $t\in[0,T]$, where $\alpha= \delta-d/p$, and $u\in C^{0,1}([0,T]\times \mathbb R^d)$.
\end{theorem}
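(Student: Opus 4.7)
The natural strategy is a Banach fixed-point argument carried out in the Banach space $\bigl(C([0,T]; H^{1+\delta}_p), \|\cdot\|^{(\rho)}_{C([0,T]; H^{1+\delta}_p)}\bigr)$ equipped with the equivalent weighted norm \eqref{eq: rho equiv norm}. I would define the map
\begin{equation*}
\mathcal{I}(u)(t) := P(T-t)\Phi - \int_t^T P(r-t)\bigl(\nabla u^*(r)\,b(r)\bigr)\,\mathrm{d}r - \int_t^T P(r-t)\,f(r,\cdot,u(r),\nabla u(r))\,\mathrm{d}r,
\end{equation*}
and look for a fixed point. The key observation enabling this is that for $u \in C([0,T]; H^{1+\delta}_p)$ we have $\nabla u(t) \in H^\delta_p$, and by Assumption \ref{ass: parameters} the parameters $(\delta, p, \beta, q)$ satisfy the hypotheses of Lemma \ref{lm: pointwise product}; hence the pointwise product $\nabla u^*(r)\,b(r)$ is well-defined in $H^{-\beta}_p$ with the bound $\|\nabla u^*(r) b(r)\|_{H^{-\beta}_p} \le c\,\|b(r)\|_{H^{-\beta}_q}\|\nabla u(r)\|_{H^\delta_p}$.

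First I would show $\mathcal I$ maps the space into itself. The terminal-condition term $P(T-t)\Phi$ is continuous in $t$ valued in $H^{1+\delta}_p$ (and satisfies $\Phi \in H^{1+\delta}_p$ because $\Phi \in H^{1+\delta+2\gamma}$ by Assumption \ref{ass: coefficients}), using the contraction property \eqref{eq: P contraction} of $P$. For the nonlinear integral terms I would invoke the smoothing bound \eqref{eq: mapping prop Pt}:
\begin{equation*}
\|P(r-t)w\|_{H^{1+\delta}_p} \le C e^{T}\,(r-t)^{-\frac{1+\delta+\beta}{2}}\|w\|_{H^{-\beta}_p},
\end{equation*}
applied once to $w = \nabla u^*(r)b(r)$ (using the pointwise product bound) and once to $w = f(r,\cdot,u(r),\nabla u(r))$ (which belongs to $H^0_p \hookrightarrow H^{-\beta}_p$ thanks to the Lipschitz bound \eqref{eq: f is lipschitz} combined with $\sup_t \|f(t,\cdot,0,0)\|_{L^p} < \infty$ from Assumption \ref{ass: coefficients}). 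The singularity $(r-t)^{-(1+\delta+\beta)/2}$ is integrable since $\delta + \beta < 1$, so these convolution integrals are finite; continuity in $t$ would follow by the arguments in Lemma \ref{lm: continuity of semigroup}(ii) applied twice.

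Next, for the contraction step, I would compute $\mathcal I(u_1) - \mathcal I(u_2)$ and bound its $H^{1+\delta}_p$-norm by
\begin{equation*}
C\int_t^T (r-t)^{-\frac{1+\delta+\beta}{2}} \Bigl(\|b(r)\|_{H^{-\beta}_q} + L\Bigr)\,\|u_1(r) - u_2(r)\|_{H^{1+\delta}_p}\,\mathrm{d}r,
\end{equation*}
using again the pointwise-product bound and the Lipschitz property \eqref{eq: f is lipschitz}. Inserting the exponential weight $e^{-\rho t}$ and absorbing $e^{-\rho(r-t)}$ against the integrable singularity, one makes the prefactor arbitrarily small by taking $\rho$ large, yielding a strict contraction and thus a unique fixed point $u \in C([0,T]; H^{1+\delta}_p)$.

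Finally, for the regularity conclusions: since $(\delta,p) \in K(\beta,q)$ gives $\delta > d/p$, the fractional Morrey inequality (Lemma \ref{lm: fractional Morrey ineq}) yields $u(t) \in C^{1+\alpha}$ with $\alpha = \delta - d/p$ and
\begin{equation*}
\|u(t)\|_{C^{1+\alpha}} \le c\,\|u(t)\|_{H^{1+\delta}_p}.
\end{equation*}
Combining this uniform-in-space embedding with the continuity $t \mapsto u(t) \in H^{1+\delta}_p$ gives joint continuity of $(t,x) \mapsto u(t,x)$ and of $(t,x) \mapsto \nabla u(t,x)$, so $u \in C^{0,1}([0,T]\times \mathbb R^d)$. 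The main obstacle is the pointwise-product term $\nabla u^* b$: one must delicately balance the regularity that the semigroup gains against the loss from multiplying by the distribution $b$, which is exactly why Assumption \ref{ass: parameters} imposes $\delta > \beta$ and $\delta+\beta < 1$ so that both the pointwise product (Lemma \ref{lm: pointwise product}) and the integrable singularity of the heat kernel (Lemma \ref{lm: continuity of semigroup}) cooperate.
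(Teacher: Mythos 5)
This theorem is not proved in the present paper at all: it is imported from \cite{issoglio_jing16} (Theorem 5 and Lemmata 5 and 8 there), so your fixed-point argument is a reconstruction of the cited proof rather than an alternative to anything in this text --- and it is the right reconstruction in outline: mild formulation, the pointwise-product bound of Lemma \ref{lm: pointwise product} to place $\nabla u^*(r)\,b(r)$ in $H^{-\beta}_p$, the smoothing estimate \eqref{eq: mapping prop Pt} with integrable singularity (using $\delta+\beta<1$), a contraction in a weighted norm, and finally Lemma \ref{lm: fractional Morrey ineq} to get $u(t)\in C^{1+\alpha}$ and $u\in C^{0,1}$. Two points in your sketch need repair. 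First, the weight: with the norm \eqref{eq: rho equiv norm}, i.e.\ $\sup_t e^{-\rho t}\|\cdot\|_{H^{1+\delta}_p}$, applied to the \emph{backward} integrals $\int_t^T$, the factor you extract is $e^{-\rho t}e^{\rho r}=e^{+\rho(r-t)}\ge 1$, which grows rather than decays, so the prefactor cannot be made small by taking $\rho$ large; you must either use the weight $e^{-\rho(T-t)}$ or time-reverse ($\hat u(s):=u(T-s)$, $\hat b(s):=b(T-s)$, etc.), exactly as the paper does in the proof of Proposition \ref{prop: Markovian BSDE - uniq of sol}, after which the Gamma-function bound $\int_0^t e^{-\rho r}r^{-\frac{1+\delta+\beta}{2}}\,\mathrm dr\le \Gamma\bigl(\tfrac{1-\delta-\beta}{2}\bigr)\rho^{\frac{\delta+\beta-1}{2}}$ gives the contraction. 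Second, continuity in $t$ of $r\mapsto f(r,\cdot,u(r),\nabla u(r))$ as an $H^0_p$-valued (or $H^{-\beta}_p$-valued) map does not follow directly from the pointwise hypotheses of Assumption \ref{ass: coefficients}; this is why the paper, following \cite[Remark 2.5]{issoglio_jing16}, replaces $f$ by the map $\tilde f(t,u,v)=f(t,\cdot,u,\nabla u)$ acting on $H^{1+\delta}_p\times H^{\delta}_p$ and satisfying \eqref{eq: f is lipschitz}, and your self-mapping step should be phrased at that level. With these two adjustments your argument is a faithful version of the proof in the cited reference.
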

A small note: in \cite{issoglio_jing16} the result  is  valid even if  $b\in L^\infty ([0,T]; H^{-\beta}_{q} )$.

\section{The Markovian case with distributional driver}\label{sc: markovian case}

In this section we  carry out the analysis 
of  BSDE \eqref{eq: BSDE classical form} when $b\in C([0,T]; H^{-\beta}_{q})$ in the \emph{Markovian setting}.   The Markovian setting means here
that the process $Y$ and the r.v.\ $\xi$ are deterministic  functions of $W$, namely $\xi = \Phi(W_T)$ and  $Y_t= \gamma(t, W_t)$ for some deterministic functions $\Phi$ and $\gamma$, the regularity of which is specified below.
 
As already mentioned previously,  one of the main issues when dealing with generalised functions is to show that the integral operator $A^{W,Y}$ can be extended to   
$C([0,T];H^{-\beta}_{q} )$. This extension is performed in Subsection \ref{ssc: preliminaries AYW} below.
In Subsection \ref{ssc: existence} we will show existence (and uniqueness) of a solution  to BSDE \eqref{eq: BSDE classical form} according to Definition \ref{def: solution BSDE integral A} when $b$ is a rough driver.

\subsection{Properties for the occupation time operator $A^{W,W}$}\label{ssc: preliminaries AYW}
In this section we show how to extend the operator $A^{W,Y}$ to generalised functions. 
 Let us focus on the smooth case for a moment. The first key observation is that in the Markovian setting  we can rewrite    $A^{W,Y}$ in terms of the {\em occupation time operator} $A^{W,W}$, where we recall that
\begin{equation}\label{eq: operator A WW in C}
A^{W,W}:  C_c ([0,T]\times\mathbb R^d;\mathbb R^d) \to \mathcal C
\end{equation}
is the integral operator from Definition \ref{def: operator A YW in C} when $Y=W$ and $\mathcal C$ is the space of continuous paths on $[0,T] $ with values in $\mathbb R^d$. 
In the special case when $Y=W$, the covariation is a multiple of the $d$-dimensional identity matrix $ I_d$, so that $\mathrm d[W,W]_r = I_d \mathrm dr$. In particular this means that for any $l\in C_c ([0,T]\times\mathbb R^d;\mathbb R^d)$ we have 
\begin{equation}\label{eq: operator AWW}
A^{W,W}_t(l) = \left( \int_0^t l^*(r, W_r)I_d \mathrm dr\right)^* = \int_0^t l(r, W_r) \mathrm dr,
\end{equation}
for all $t\in[0,T]$. To see that $A^{W,Y}$ can be written in terms of $A^{W,W}$ in the Markovian case, suppose  that there exists a function $\gamma\in C^{0,1}$ such that $Y_t= \gamma(t, W_t)$, hence by Corollary \ref{cor: covariation} we have $[\gamma(\cdot, W) ,  W]_t = \int_0^t \nabla \gamma^*  (r, W_r) \mathrm d r$ and so  $ [W,Y]_t = [W,\gamma(\cdot, W)]_t = \int_0^t\nabla \gamma(r, W_r) \mathrm d r $.
Thus for any  smooth  driver  $l$ in $ C_c ([0,T]\times\mathbb R^d;\mathbb R^d)$
we have the following representation for the integral operator:
\begin{align} \nonumber
A_t^{W,Y}(l) =&  \left( \int_0^t l^*(r, W_r) \mathrm d[W,Y]_r \right)^*\\ 
=& \left(\int_0^t l^*(r, W_r)\nabla \gamma(r, W_r) \mathrm dr \right)^* \nonumber \\
=&  \int_0^t \nabla \gamma^*(r, W_r)  l(r, W_r) \mathrm dr   \nonumber \\
=& A_t^{W,W}(\nabla \gamma^*\, l). \label{eq: AYW = AWW}
\end{align}
By Theorem \ref{thm: Issoglio Jing} $u\in C^{0,1}$ and so equation  \eqref{eq: AYW = AWW} holds true also in the case where $\gamma$ is replaced by the solution $u$ of PDE \eqref{eq: PDE for u}.

\vspace{10pt}
Before going into details on the extension of $A^{W,Y}$ we state a useful density result, the proof of which is postponed to the Appendix.

\begin{lemma}\label{lm: density}
We have $  C_c^\infty ([0,T]\times\mathbb R^d;\mathbb R^d)\subset C([0,T]; H^{s}_{r} ) $ for any $-\frac12<s\leq 0$ and $2\leq r < \infty $, and the inclusion is dense.  
\end{lemma}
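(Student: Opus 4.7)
The plan is to establish the inclusion and the density separately. For the inclusion, if $f\in C_c^\infty([0,T]\times\mathbb R^d;\mathbb R^d)$ then each slice $f(t,\cdot)$ is smooth with compact support in $x$, so it lies in $L^r(\mathbb R^d)$. For $s\le 0$ the Bessel operator $A^{s/2}$ is convolution with a nonnegative $L^1$-kernel of total mass one, hence an $L^r$-contraction; thus $L^r(\mathbb R^d)\hookrightarrow H^s_r(\mathbb R^d)$ with $\|\cdot\|_{H^s_r}\le\|\cdot\|_{L^r}$. Continuity of $t\mapsto f(t,\cdot)$ as an $H^s_r$-valued map then follows from uniform continuity of $f$ on its $(t,x)$-compact support combined with this contractive embedding.

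For density, fix $g\in C([0,T];H^s_r)$ and $\varepsilon>0$. Since $g$ is uniformly continuous on $[0,T]$ as an $H^s_r$-valued path, I can choose $N\in\mathbb N$ such that $\|g(t)-g(t')\|_{H^s_r}<\varepsilon$ whenever $|t-t'|\le T/N$. On the uniform partition $t_k=kT/N$, define the piecewise-affine interpolant $\tilde g_N(t):=(1-\lambda)g(t_k)+\lambda g(t_{k+1})$ for $t\in[t_k,t_{k+1}]$ with $\lambda:=(t-t_k)N/T$; convexity of the norm yields $\sup_{t\in[0,T]}\|g(t)-\tilde g_N(t)\|_{H^s_r}\le\varepsilon$. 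Next, using density of $C_c^\infty(\mathbb R^d;\mathbb R^d)$ in $H^s_r(\mathbb R^d;\mathbb R^d)$ --- a standard property of Bessel-potential spaces for $1<r<\infty$ and arbitrary $s\in\mathbb R$ --- I pick $\varphi_k\in C_c^\infty(\mathbb R^d;\mathbb R^d)$ with $\|g(t_k)-\varphi_k\|_{H^s_r}<\varepsilon$ for $k=0,\dots,N$. Letting $h_N$ be the piecewise-affine interpolant built from the $\varphi_k$, the same convexity bound gives $\sup_t\|\tilde g_N(t)-h_N(t)\|_{H^s_r}<\varepsilon$, whence $\sup_t\|g(t)-h_N(t)\|_{H^s_r}<2\varepsilon$ by the triangle inequality.

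It remains to check that $h_N$ actually belongs to $C_c^\infty([0,T]\times\mathbb R^d;\mathbb R^d)$ in the paper's sense: for each fixed $t$, $h_N(t,\cdot)$ is a convex combination of two $C_c^\infty$ functions, hence lies in $C_c^\infty(\mathbb R^d;\mathbb R^d)$; joint continuity in $(t,x)$ follows from the piecewise-affine-in-$t$, $C_c^\infty$-valued structure; and the spatial support is contained in the fixed compact set $\bigcup_{k=0}^N\mathrm{supp}(\varphi_k)$. The only nontrivial input is the density of $C_c^\infty(\mathbb R^d)$ in $H^s_r(\mathbb R^d)$ for negative $s$, which I would cite from Triebel's treatment of Bessel-potential spaces; once that is in hand, the rest of the proof is a double application of the triangle inequality along piecewise-affine interpolations in time, and there is no real obstacle beyond ensuring the various convex-combination estimates are compatible uniformly on $[0,T]$.
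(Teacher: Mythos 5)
Your proof is correct, but it follows a genuinely different route from the paper's. For the density part the paper expands $l(t)$ in the Haar wavelet basis of $H^s_r(\mathbb R^d)$ (Triebel), truncates via projectors $P_N$, invokes an auxiliary lemma on equibounded operators converging uniformly on the compact set $\{l(t):t\in[0,T]\}$ to get uniform-in-time convergence, and finally replaces the finitely many Haar functions by $C_c^\infty$ approximants; note that the hypotheses $2\le r$ and $-\tfrac12<s$ in the statement are exactly the range in which the Haar system is an unconditional basis, so they are structural to that argument. You instead exploit only the uniform continuity of $t\mapsto g(t)$ in $H^s_r$: piecewise-affine interpolation in time on a fine partition, approximation of the node values $g(t_k)$ by $\varphi_k\in C_c^\infty(\mathbb R^d;\mathbb R^d)$ using the standard density of $C_c^\infty$ in Bessel-potential spaces (the same Triebel input as the paper's Step 1), and two convexity/triangle estimates. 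This is more elementary, avoids the wavelet machinery and the compactness lemma entirely, and in fact proves density in $C([0,T];H^s_r)$ for every $s\in\mathbb R$ and $1<r<\infty$; the restrictions $s\le 0$ (and nothing more) are only needed for your inclusion argument via the contractive embedding $L^r\hookrightarrow H^s_r$, which the paper handles separately in its preliminaries rather than in this proof. Your check that the interpolant $h_N$ lies in the paper's $C_c^\infty([0,T]\times\mathbb R^d;\mathbb R^d)$ (smoothness required in $x$ only, joint continuity of all $x$-derivatives, support in $[0,T]\times\bigcup_k\mathrm{supp}\,\varphi_k$) is the right thing to verify and is sound. What the paper's heavier approach buys is an explicit canonical approximation operator $P_N$ of wavelet type, in the same spirit as the mollifier projectors it reuses later (proof of the theorem identifying $A^{W,W}(g)$ with the classical integral), but for the density statement itself your argument suffices.
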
 

\begin{rem}\label{rm: density}
In  Lemma \ref{lm: density} one can replace  $   C_c^\infty $ with the larger space $ C_c $ and 
therefore obtain that also the space $ C_c ([0,T]\times\mathbb R^d;\mathbb R^d)$ is   dense in  $ C([0,T]; H^{s}_r) $.
\end{rem}
 
The next result provides us with an explicit representation (chain rule) of the occupation time operator $A^{W,W}$ for smooth $l$, and this representation will still hold in the rough case. 
\begin{prop}[Chain rule - smooth case]\label{pr: chain rule in C}
\begin{itemize}
\item[(i)]
Let Assumption \ref{ass: parameters} hold, let $l\in  C_c([0,T]\times\mathbb R^d; \mathbb R^d) $ and  $\Psi\in H^{1+\delta}_p$. Let us denote by  $\phi$ the function given by the  expression
\eqref{eq: mild sol phi}.  
Then  for the integral operator  $A^{W,W}$ given in \eqref{eq: operator A WW in C} we have the representation
\begin{equation}\label{eq: chain rule in C}
A^{W,W}_t(l) = \phi(t, W_t) - \phi(0, W_0)  -  \int_0^t \nabla \phi^* (r, W_r) \mathrm  d W_r,
\end{equation}
for all $t\in[0,T]$.
\item[(ii)] The map on the right-hand side of \eqref{eq: chain rule in C} is continuous with respect to $\phi\in C^{0,1}$.
\end{itemize}
\end{prop}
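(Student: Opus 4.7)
The plan is to prove (i) by reducing to the classical $C^{1,2}$ setting via approximation, and then to pass to the limit using Corollary \ref{cor: convergence of phi_n in C01}; part (ii) will then follow at once from the same limiting argument. First I would choose approximations $l_n \in C_c^\infty([0,T]\times\mathbb R^d;\mathbb R^d)$ with $l_n \to l$ uniformly on $[0,T]\times\mathbb R^d$ (and with all $\mathrm{supp}\, l_n$ contained in a fixed neighbourhood of $\mathrm{supp}\, l$, so that in particular $l_n \to l$ in $C([0,T]; H^{-\beta}_p)$), and $\Psi_n \in C^{2+\epsilon}(\mathbb R^d;\mathbb R^d)\cap H^{1+\delta}_p$ with $\Psi_n \to \Psi$ in $H^{1+\delta}_p$, obtained e.g.\ by spatial mollification. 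Denote by $\phi_n$ the mild solution of \eqref{eq: PDE for phi} with data $(l_n, \Psi_n)$; by Lemma \ref{lm: phi is C12}, $\phi_n \in C^{1,2+\epsilon}$ and solves the PDE classically.

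Applying the classical It\^o formula componentwise to $\phi_n(t,W_t)$ and using $\partial_t\phi_n + \tfrac12 \Delta\phi_n = l_n$ yields
\begin{equation*}
\phi_n(t,W_t) - \phi_n(0,W_0) = \int_0^t l_n(s,W_s)\,\mathrm ds + \int_0^t \nabla\phi_n^*(s,W_s)\,\mathrm dW_s,
\end{equation*}
and the first integral equals $A_t^{W,W}(l_n)$ by \eqref{eq: operator AWW}. I would then pass to the limit $n\to\infty$ in each term. By Corollary \ref{cor: convergence of phi_n in C01}, $\phi_n \to \phi$ in $C^{0,1}$, which gives $\phi_n(\cdot,W_\cdot) \to \phi(\cdot,W_\cdot)$ u.c.p.\ on $[0,T]$ since $W$ has a.s.\ locally bounded paths. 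For the stochastic integral I would localise via the stopping times $\tau_K := \inf\{t\ge0 : |W_t|\ge K\}\wedge T$: on $[0,\tau_K]$ one has uniform convergence $\nabla\phi_n \to \nabla\phi$ on $[0,T]\times\{|x|\le K\}$, so by It\^o's isometry and dominated convergence the truncated stochastic integrals converge in $L^2$ uniformly in $t$; letting $K\to\infty$ upgrades this to u.c.p.\ convergence on $[0,T]$. Uniform convergence $l_n\to l$ gives $A_t^{W,W}(l_n)\to A_t^{W,W}(l)$ u.c.p.\ by dominated convergence. Combining the three limits yields \eqref{eq: chain rule in C}.

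For part (ii) the same two ingredients (continuity of pointwise evaluation from $C^0$ plus local boundedness of $W$, and $\tau_K$-localisation plus It\^o isometry for the stochastic integral) show that the map $\phi \mapsto \phi(\cdot,W_\cdot) - \phi(0,W_0) - \int_0^\cdot \nabla\phi^*(s,W_s)\,\mathrm dW_s$ is continuous from $C^{0,1}$ into $\mathcal C$ equipped with u.c.p.\ convergence. The main technical obstacle is precisely the passage to the limit in the stochastic integral: $C^{0,1}$-convergence controls $\nabla\phi_n$ only uniformly on spatial compacts, so a direct It\^o isometry estimate on all of $\mathbb R^d$ is unavailable. The $\tau_K$-truncation is the key device that confines the integrand to a fixed compact, bringing the estimate into the scope of It\^o's isometry and closing the argument.
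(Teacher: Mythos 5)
Your proposal is correct and takes essentially the same route as the paper: approximate $(l,\Psi)$ by smooth data so that Lemma \ref{lm: phi is C12} gives a classical $C^{1,2}$ solution, apply It\^o's formula, and pass to the limit using Corollary \ref{cor: convergence of phi_n in C01}. The only cosmetic differences are that the paper obtains the u.c.p.\ convergence of the stochastic integrals by citing a single reference rather than reproducing the localisation/It\^o-isometry argument inline, and approximates $l$ via the density Lemma \ref{lm: density} rather than by explicit mollification, while you sensibly also arrange uniform convergence of $l_n$ so the left-hand side $A^{W,W}(l_n)\to A^{W,W}(l)$ is immediate.
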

We note that the structure of the representation \eqref{eq: chain rule in C} does not change when $\Psi$ changes (although obviously the actual function $\phi$ changes when $\Psi$ changes).
\begin{proof}
We prove {\em part (ii)} first. By linearity it is enough to prove it for $\phi=0$. Let $\phi_n\in C^{0,1}$ such that $\phi_n \to 0$ in the same space.  Clearly 
 $\phi_n(\cdot, W)$ converges uniformly to 0 a.s., and in particular uniformly in probability. Setting $f_n = \nabla \phi_n^*$ it remains to show that 
\[
\int_0^\cdot f_n (r, W_r) \mathrm  d W_r \to 0 \text{ u.c.p.}
\]
According to \cite[Proposition 2.26]{karatzasShreve} 
it is enough to show that 
\begin{equation}\label{eq: integral f}
\int_0^T |f_n (r, W_r)|^2 \mathrm  d r \to 0  
\end{equation}
in probability. Now $f_n\to 0 $ uniformly on each  compact by assumption, which implies that \eqref{eq: integral f} holds a.s.

Next we prove {\em part (i)}. Let $(l_n)_n$ be  a sequence in $ C_c^\infty([0,T]\times\mathbb R^d; \mathbb R^d)$   such that $l_n\to l$ in $ C([0,T]; H^{-\beta}_p)$, which can be constructed by Lemma \ref{lm: density} since  $-\frac12<-\beta\leq 0$ and $2\leq p\leq \infty $ by Assumption \ref{ass: parameters}.  
Moreover a similar approximation can be done for $\Psi$, namely since $C^\infty_c$ is dense in  $H^{1+\delta}_p $ (see Step 1 of the proof of Lemma \ref{lm: density}) we can also construct a sequence $(\Psi_n)\subset C^\infty_c$ such that $\Psi_n\to\Psi$ in $H^{1+\delta}_p $.
Let $\phi_n$ denote the expression \eqref{eq: mild sol phi}, 
 where $l$ is replaced by $l_n$ and $\Psi$ by $\Psi_n$.
Then $\phi_n$ is  at least of class $C^{1,2}$ on $[0,T]\times \mathbb R^d$  by Lemma \ref{lm: phi is C12}. 
Given  the expression \eqref{eq: operator AWW}, the PDE \eqref{eq: PDE for phi} and It\^o's formula we get   
\begin{align} \label{E19bis}
A_t^{W,W}(l_n) &= \int_0^t l_n(r, W_r) \mathrm dr  \nonumber\\
&= \int_0^t \left( \partial_t \phi_n(r, W_r) +\frac12 \Delta \phi_n (r, W_r)\right) \mathrm dr\\
&= \phi_n(t, W_t)- \phi_n (0, W_0) - \int_0^t \nabla \phi_n^*(r, W_r)  \mathrm d  W_r, \nonumber
\end{align}
for $0\leq t\leq T$.
By Corollary \ref{cor: convergence of phi_n in C01} we have that $\phi_n\to\phi$ in $C^{0,1}$, thus applying part (ii) we conclude that
\begin{align*} \nonumber
A_\cdot^{W,W} (l) &= \lim_{n\to\infty} A_\cdot^{W,W} (l_n) \\
&= \lim_{n\to\infty} \left( \phi_n(\cdot, W_\cdot) - \phi_n(0, W_0)  -  \int_0^\cdot \nabla \phi_n^* (r, W_r) \mathrm  d W_r\right) \\
 &= \phi(\cdot, W_t) - \phi(0, W_0)  -  \int_0^\cdot \nabla \phi^* (r, W_r) \mathrm  d W_r
\end{align*}
and the proof is complete.
\end{proof}
The following proposition will be used to extend the occupation time  operator $A^{W,W}$ to a suitable space of generalised functions, see Remark \ref{rm: continuity of A WW in H}, part 1. 
\begin{prop}\label{pr: continuity of A WW in C}
 The operator $ A^{W,W}$ (defined in Definition \ref{def: operator A YW in C}  in the special case $Y=W$) is continuous with respect to the topology $C([0,T]; H^{-\beta}_{p})$.
\end{prop}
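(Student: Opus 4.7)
The plan is to use Proposition \ref{pr: chain rule in C} (the chain rule representation) together with Corollary \ref{cor: convergence of phi_n in C01} (continuous dependence of the auxiliary PDE solution on its source) to transfer convergence in $C([0,T]; H^{-\beta}_p)$ into convergence in $C^{0,1}$ for the auxiliary function $\phi$, and then to conclude by part (ii) of Proposition \ref{pr: chain rule in C}.

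More precisely, let $l_n, l \in C_c([0,T]\times\mathbb R^d; \mathbb R^d)$ with $l_n \to l$ in $C([0,T]; H^{-\beta}_p)$. Since $A^{W,W}$ is linear, it suffices by setting $\tilde l_n := l_n - l$ to show that if $\tilde l_n \to 0$ in $C([0,T]; H^{-\beta}_p)$, then $A^{W,W}(\tilde l_n) \to 0$ in the u.c.p.\ sense. First I would invoke Proposition \ref{pr: chain rule in C}(i) applied with $\Psi = 0$: for each $n$, denoting by $\phi_n$ the mild solution \eqref{eq: mild sol phi} of \eqref{eq: PDE for phi} with source $\tilde l_n$ and terminal datum $0$,
\begin{equation*}
A^{W,W}_t(\tilde l_n) = \phi_n(t, W_t) - \phi_n(0, W_0) - \int_0^t \nabla\phi_n^*(r, W_r)\,\mathrm dW_r.
\end{equation*}

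Next, since $\tilde l_n \to 0$ in $C([0,T]; H^{-\beta}_p)$ and the terminal condition is held fixed at $0 \in H^{1+\delta}_p$, Corollary \ref{cor: convergence of phi_n in C01} yields $\phi_n \to 0$ in $C^{0,1}$. Finally, part (ii) of Proposition \ref{pr: chain rule in C} states precisely that the map $\phi \mapsto \phi(\cdot, W_\cdot) - \phi(0, W_0) - \int_0^\cdot \nabla\phi^*(r, W_r)\,\mathrm dW_r$ is continuous from $C^{0,1}$ into $\mathcal C$ with respect to u.c.p.\ convergence; applying this gives $A^{W,W}(\tilde l_n) \to 0$ u.c.p., as desired.

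The main point of the argument is really the chain rule representation \eqref{eq: chain rule in C}: once this identity is available at the level of smooth $l$, continuity is essentially free since the right-hand side depends on $l$ only through $\phi$, which depends continuously on $l$ by the PDE theory of Section \ref{sc: PDE results}. There is no essential obstacle: all the technical work (analytic bounds on the heat semigroup, the fractional Morrey embedding ensuring $\phi \in C^{0,1}$, and the u.c.p.\ continuity of the stochastic integral against a $C^{0,1}$ integrand) has been done in the preceding lemmas and in Proposition \ref{pr: chain rule in C}.
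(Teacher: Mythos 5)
Your argument is correct and coincides with the paper's own proof: reduce to continuity at $0$ by linearity, represent $A^{W,W}(\tilde l_n)$ via the smooth chain rule of Proposition \ref{pr: chain rule in C}(i) (with a fixed terminal datum, e.g.\ $\Psi=0$), use Corollary \ref{cor: convergence of phi_n in C01} to get $\phi_n\to 0$ in $C^{0,1}$, and conclude with Proposition \ref{pr: chain rule in C}(ii). No gaps.
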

\begin{proof}
Let $(l_n)_n\subset  C_c ([0,T]\times\mathbb R^d;\mathbb R^d)$ be a sequence such that $ l_n\to 0$  in $ C([0,T]; H^{-\beta}_{p} )$.
Let $\phi_n$ be given  by \eqref{eq: mild sol phi} 
 with $l$ replaced by $l_n$. By Corollary \ref{cor: convergence of phi_n in C01}  we get $\phi_n\to 0$ in $C^{0,1}$. Using  the chain rule (Proposition \ref{pr: chain rule in C} part (i)) and taking the  u.c.p.-limit in $\mathcal C$ as $n\to \infty $ we get by Proposition  \ref{pr: chain rule in C} part (ii)
\begin{align*}
\lim_{n\to \infty} A^{W,W}_\cdot (l_n) 
& =  \lim_{n\to \infty}\left ( \phi_n(\cdot, W) -\phi_n(0, W_0) -  \int_0^\cdot \nabla \phi_n^* (r, W_r) \mathrm  d W_r \right)\\
& =  0.
\end{align*}
The continuity of the occupation time operator $A^{W,W}$ at 0 implies the continuity everywhere by linearity.
\end{proof}

In what follows we are interested in drivers $b\in C([0,T]; H^{-\beta}_{q} ) $, so we would like to extend the operator $A^{W, Y}$ to $b\in C([0,T]; H^{-\beta}_{q} ) $. This will be done by using the occupation time operator $A^{W,W}$, which will be calculated in $\nabla \gamma^*\, b$ for some appropriate function $\gamma$, and $\nabla \gamma^*\, b $ will belong to $C([0,T]; H^{-\beta}_{p} )$. 
For this reason we start by extending the occupation time operator $A^{W,W}$ to the space $E=  C([0,T]; H^{-\beta}_{p} ) $, as explained below.

\begin{rem}\label{rm: continuity of A WW in H} 
\begin{enumerate}
\item By Lemma \ref{lm: density} and Proposition \ref{pr: continuity of A WW in C} we can extend the operator $ A^{W,W} $ continuously to 
$E = C([0,T]; H^{-\beta}_{p} ) $, where the parameters $p$ and $-\beta$ are chosen according to Assumption \ref{ass: parameters}.
So $A^{W,W}$ is well-defined according to Definition \ref{def: operator A YW in H general}. 
\item Clearly the extended operator $A^{W,W}$ defined in Remark \ref{rm: continuity of A WW in H} part 1.\ is continuous, i.e.\ we have 
\[
A^{W,W}_\cdot(l) = \lim_{n\to \infty} A_\cdot^{W,W}(l_n) 
\]
in $\mathcal C$
for any sequence $(l_n)_n$ such that $ l_n\to l$ in $C([0,T]; H^{-\beta}_{p})$.
\end{enumerate}
\end{rem}
We can now  easily prove the chain rule  in the rough case, thus we get an explicit representation of $A^{W,W}_t(l)$ in terms of the solution $\phi$ of equation \eqref{eq: PDE for phi} when  $l\in C([0,T];   H^{-\beta}_{p})$.
\begin{prop}[Chain rule - rough case]\label{pr: chain rule in H}
Let Assumption \ref{ass: parameters} hold,  $l\in C([0,T];   H^{-\beta}_{p})$  and $\phi$ be given by \eqref{eq: mild sol phi} for a  terminal condition $\Psi\in H^{1+\delta}_p$.  Then for all $t\in[0,T]$ we have the representation
\begin{equation}\label{eq: chain rule in H}
A^{W,W}_t(l) = \phi(t, W_t) - \phi(0, W_0)  -  \int_0^t \nabla \phi^* (r, W_r) \mathrm  d W_r.
\end{equation}
Moreover $A^{W,W}(l)$ is a  martingale-orthogonal  process.
\end{prop}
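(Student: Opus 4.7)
The plan is to prove the representation \eqref{eq: chain rule in H} by density and a limiting argument, then deduce the martingale-orthogonal property from the weak Dirichlet decomposition of $\phi(\cdot, W)$.

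First, I would invoke Lemma \ref{lm: density} together with Remark \ref{rm: density} (with parameters $s = -\beta$ and $r = p$, which lie in the admissible range by Assumption \ref{ass: parameters}) to pick a sequence $(l_n)_n \subset C_c([0,T]\times \mathbb{R}^d;\mathbb{R}^d)$ such that $l_n \to l$ in $C([0,T]; H^{-\beta}_p)$. For each $n$, let $\phi_n$ denote the function given by \eqref{eq: mild sol phi} with $l$ replaced by $l_n$ (keeping the same terminal condition $\Psi$). By the linearity of the mild formulation and Corollary \ref{cor: convergence of phi_n in C01}, one has $\phi_n \to \phi$ in $C^{0,1}$.

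Next, I would apply the smooth chain rule (Proposition \ref{pr: chain rule in C}, part (i)) to each pair $(l_n, \phi_n)$, obtaining
\[
A^{W,W}_t(l_n) = \phi_n(t, W_t) - \phi_n(0, W_0) - \int_0^t \nabla\phi_n^*(r, W_r)\, \mathrm dW_r,
\]
for all $t \in [0,T]$. Passing to the u.c.p.\ limit as $n \to \infty$: the left-hand side converges to $A^{W,W}_t(l)$ thanks to the continuity of the extended occupation time operator on $C([0,T]; H^{-\beta}_p)$, recorded in Remark \ref{rm: continuity of A WW in H}, part 2; the right-hand side converges to $\phi(t, W_t) - \phi(0, W_0) - \int_0^t \nabla\phi^*(r, W_r)\, \mathrm dW_r$ by the continuity statement in Proposition \ref{pr: chain rule in C}, part (ii). This yields the representation \eqref{eq: chain rule in H}.

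For the martingale-orthogonal claim, I would argue as follows. By Lemma \ref{lm: phi is C01}, $\phi \in C^{0,1}([0,T]\times \mathbb{R}^d;\mathbb{R}^d)$, so by \cite[Corollary 3.11]{gozzi_russo06} (invoked componentwise, as in the proof of Proposition \ref{pr: covariation}) the process $\phi(\cdot, W)$ is a weak Dirichlet process whose continuous local martingale component is precisely $\int_0^\cdot \nabla\phi^*(r, W_r)\, \mathrm dW_r$. Therefore the remainder
\[
\phi(\cdot, W_\cdot) - \phi(0, W_0) - \int_0^\cdot \nabla\phi^*(r, W_r)\, \mathrm dW_r
\]
is exactly the martingale-orthogonal part of the decomposition, and via \eqref{eq: chain rule in H} it coincides with $A^{W,W}(l)$, giving the claim. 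I do not foresee a serious obstacle here: the argument is essentially a soft assembly of the density result, the continuity of the extended operator, the continuity of the right-hand side as a map of $\phi\in C^{0,1}$, and the weak Dirichlet decomposition of $\phi(\cdot, W)$. The only point that requires care is to make sure the approximants $l_n$ converge in the correct topology $C([0,T]; H^{-\beta}_p)$ (so as to match both the operator-continuity statement and the PDE-stability statement), which is why Lemma \ref{lm: density} is formulated for the range $-\tfrac12 < s \leq 0$ that contains $s = -\beta$.
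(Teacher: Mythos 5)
Your proof is correct and follows essentially the same route as the paper: approximate $l$ by compactly supported drivers via Lemma \ref{lm: density}, apply the smooth chain rule (Proposition \ref{pr: chain rule in C}(i)) to $(l_n,\phi_n)$, and pass to the u.c.p.\ limit using Remark \ref{rm: continuity of A WW in H} and Corollary \ref{cor: convergence of phi_n in C01} together with Proposition \ref{pr: chain rule in C}(ii). The only cosmetic difference is the orthogonality step: the paper computes $[\,\cdot\,,N]$ for each term on the right-hand side via Corollary \ref{cor: covariation} and checks cancellation, while you identify $A^{W,W}(l)$ directly as the martingale-orthogonal remainder in the weak Dirichlet decomposition of $\phi(\cdot,W)$ from \cite[Corollary 3.11]{gozzi_russo06} -- both arguments rest on the same result, so this is the same proof in substance.
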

Note that this chain rule does not depend on the actual $ \Psi$ chosen, in particular we can pick $\Psi=0$ or $\Psi=\Phi $.
\begin{proof}
By Lemma \ref{lm: density} we can take a sequence $l_n\to l$ in $C([0,T];   H^{-\beta}_{p})$ such that $(l_n)_n\subset  C_c^\infty ([0,T]\times\mathbb R^d;\mathbb R^d)   $.
 By Remark \ref{rm: continuity of A WW in H} part 2.\ and  the chain rule for the smooth case (Proposition \ref{pr: chain rule in C} part (i)) we get
\begin{align*}
A^{W,W}_\cdot(l)&= \lim_{n\to\infty} A^{W,W}_\cdot (l_n)\\
& =\lim_{n\to\infty} \left( \phi_n(\cdot, W) - \phi_n(0, W_0)  -  \int_0^\cdot \nabla \phi_n^* (r, W_r) \mathrm  d W_r \right).
\end{align*}
Moreover  we can apply Corollary \ref{cor: convergence of phi_n in C01} to $\phi_n$ because indeed  $l_n \to l$ in $ C ([0,T];H^{-\beta}_{p})$ and thus $\phi_n\to\phi$ in $C^{0,1}$. Finally by Proposition \ref{pr: chain rule in C} part (ii)  we can take the   u.c.p.\ limit in $\mathcal C$ when $n \rightarrow \infty$   and   we get
\[ A^{W,W}_\cdot(l) 
 =\phi(\cdot, W) - \phi(0, W_0)  -  \int_0^\cdot \nabla \phi^* (r, W_r) \mathrm  d W_r.\]
To show that $A^{W,W}(l)$ is a  martingale-orthogonal  process we use the representation \eqref{eq: chain rule in H} and calculate the covariation of each term on the right-hand side with  an arbitrary continuous $\mathcal F$-local martingale $N$ with values in $\mathbb R^d$. By Corollary \ref{cor: covariation} 
\[
[\phi(\cdot, W) -\phi(0, W_0), N]_t = [\phi(\cdot, W), N]_t = \int_0^t \nabla \phi^* (r, W_r) \mathrm d[W,N]_r,
\]
having used the fact that $\phi\in C^{0,1} $. Since the covariation operator extends the one of semimartingales,
the covariation of $N$ and the last term on the right-hand side of \eqref{eq: chain rule in H} gives
\[
[-\int_0^\cdot \nabla \phi^* (r, W_r) \mathrm d W_r, N]_t = - \int_0^t \nabla \phi^* (r, W_r) \mathrm d  [W,N]_r,
\]
thus $[A^{W,W}(l), N]_t =0$ as required. 
\end{proof}

\begin{rem}\label{rem:ustunel}
\begin{enumerate}
\item The terminology \emph{occupation time operator} for  $A^{W,W}$ comes from
the extension of the density occupation formula 
$$ \int_0 ^t g(W_s) \mathrm  ds = \int_{\mathbb R} g(a) L_t^W(a) \mathrm da, $$
where $L^W$ is the Brownian local time.
If $g$ is not a function but  $g = h'$, where $h$ is a bounded Borel function,
the extension of the right-hand side is possible  by Bouleau-Yor formula, see \cite{bouleau_yor}. If $X$ is a semimartingale, there the authors
introduce an integral
$\int_{\mathbb R} h(a) L_t^X (\mathrm da)$. Clearly when $X=W$ the integral is well-defined because $L^W$ is itself a semimartingale.
\item In the literature one can find various It\^o type formulae
involving stochastic processes, formally of the type
$\int_0^t g(X_s) \mathrm d[X]_s$ (like in \cite{bouleau_yor} above),  where $X$ is a semimartingale 
and $g$ is a Schwartz distribution. For example in  \cite{russo_vallois95} where $X$ is a (multidimensional) semimartingale and $g = {\rm Hess} f$,
the  integral is formally
expressed as the covariation  $[\nabla f(X),X)]$. In the special case when $X=W$ is a Brownian motion (so $[W]_t \equiv t$)  those papers expanded $f(W_t)$ for some $f \in C^1(\mathbb R)$ (resp. $f \in C^1(\mathbb R^d)$) and
$g$ is the distribution $\Delta f$. In particular
those formulae focused on the pointwise composition $f(X)$.
\item Using a different approach, \cite{ustunel1982} expanded abstractly $T \star \delta_{W_t}$, where $T$ is a Schwartz distribution and 
$W$ is a standard Brownian motion.
 Taking $T$ associated with a   
$C^1$ function $f$, this would imply the expansion 
of the function $x_0 \mapsto f(W_t + x_0)$. By an easy adaptation of It\^o's formula shown in \cite{ustunel1982} one gets $\mathrm dx_0$-a.e.\
\begin{equation*}
  f(W_t + x_0) =  f(W_0 + x_0) + \int_0^t \nabla f(W_s + x_0) \mathrm dX_s 
+ \mathcal A_t(\Delta f) (x_0),
\end{equation*}
where $x_0  \mapsto \mathcal A_t(\Delta f) (x_0)$ is (for each $t$)
a random field a.s.\ associated with the random distribution
\begin{equation}\label{eq:phi}
\varphi \mapsto   \int_{\mathbb R^d}  \mathcal A_t(\Delta f)(x_0) \varphi(x_0) \mathrm dx_0 = 
\int_0^t (\Delta f \star \varphi)
(W_s + x_0) \mathrm ds  .
\end{equation}
\item This can be linked to the occupation time operator, indeed the right-hand side of \eqref{eq:phi} can be seen as $ A^{W,W}_t (\Delta f \star \varphi(x_0 + \cdot))$. 
By continuity with respect to $x_0$ it is possible
to extend It\^o's formula to every $x_0$.
At this point, if we formally take $\varphi = \delta_{x_0}$,
 then we recover the chain rule stated in Proposition \ref{pr: chain rule in H} in the special case where $f$ is time-independent. The rigorous proof however, 
would need mollifications of $\delta_{x_0}$ and a limiting procedure, which in essence is the same idea we used (translated in our context) when we defined the extended operator $A^{W,W}$. 
\end{enumerate}
\end{rem}

The next lemma is a continuity result that will be used in Proposition \ref{pr: continuity of A YW in C} to show the extension of the operator $A^{W,Y}$ to $C([0,T];   H^{-\beta}_{q})$.

\begin{lemma}\label{lm: continuity of pointwise product}
Let $\gamma \in C([0,T]; H_p^{1+\delta})$. For any sequence $(l_n)_n\subset C([0,T];   H^{-\beta}_{q})$	 such that $l_n\to l$ in $C([0,T];   H^{-\beta}_{q})$, then $ \nabla\gamma^*\, l $ is an element of $C([0,T];   H^{-\beta}_p) $ and 
 $\nabla \gamma^*\, l_n\to \nabla\gamma^*\, l$ in the same space.
\end{lemma}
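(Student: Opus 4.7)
The plan is to reduce everything to the pointwise product bound from Lemma \ref{lm: pointwise product} combined with a standard bilinear continuity argument. I first check that the parameters are in the right range: by Assumption \ref{ass: parameters} one has $0<\beta<1/2$, $\beta<\delta$, $1<p<q<\infty$, $q>d/\delta$ and $q>p$, so the hypotheses of Lemma \ref{lm: pointwise product} are satisfied with the present $(\beta,\delta,p,q)$. Moreover, since $\gamma \in C([0,T]; H^{1+\delta}_p)$, each entry of $\nabla\gamma^*$ lies in $C([0,T]; H^\delta_p)$, so componentwise we are in the setting of that lemma.

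The first step is to show that $\nabla\gamma^*\, l \in C([0,T]; H^{-\beta}_p)$. For each $t\in[0,T]$, Lemma \ref{lm: pointwise product} applied entrywise yields
\[
\|\nabla\gamma^*(t)\, l(t)\|_{H^{-\beta}_p} \le c\,\|\nabla\gamma(t)\|_{H^{\delta}_p}\,\|l(t)\|_{H^{-\beta}_q},
\]
and both factors are bounded uniformly in $t$ by continuity, so the map is pointwise well-defined. For continuity in $t$, I insert a mixed term and use bilinearity of the pointwise product (cf.\ the definition in \eqref{eq: pointwise product}) to get, for $s,t\in[0,T]$,
\begin{align*}
&\|\nabla\gamma^*(t)l(t)-\nabla\gamma^*(s)l(s)\|_{H^{-\beta}_p}\\
&\quad \le \|(\nabla\gamma^*(t)-\nabla\gamma^*(s))\,l(t)\|_{H^{-\beta}_p} + \|\nabla\gamma^*(s)\,(l(t)-l(s))\|_{H^{-\beta}_p}\\
&\quad \le c\,\|\nabla\gamma(t)-\nabla\gamma(s)\|_{H^{\delta}_p}\,\|l(t)\|_{H^{-\beta}_q} + c\,\|\nabla\gamma(s)\|_{H^{\delta}_p}\,\|l(t)-l(s)\|_{H^{-\beta}_q},
\end{align*}
where in the last line I again applied Lemma \ref{lm: pointwise product} entrywise. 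The right-hand side tends to $0$ as $s\to t$ by the continuity of $\gamma$ in $H^{1+\delta}_p$ and of $l$ in $H^{-\beta}_q$.

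The second step, the convergence $\nabla\gamma^*\,l_n \to \nabla\gamma^*\,l$, follows from the same bound applied to the difference $l_n-l$:
\[
\sup_{t\in[0,T]}\|\nabla\gamma^*(t)(l_n(t)-l(t))\|_{H^{-\beta}_p} \le c\,\Bigl(\sup_{t\in[0,T]}\|\nabla\gamma(t)\|_{H^{\delta}_p}\Bigr)\cdot \sup_{t\in[0,T]}\|l_n(t)-l(t)\|_{H^{-\beta}_q},
\]
and the right-hand side tends to $0$ by assumption, which establishes the convergence in $C([0,T]; H^{-\beta}_p)$.

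There is no real obstacle here; the content is essentially bilinearity of the product combined with the bound \eqref{eq: pointwise product bound}. The only point requiring minor care is the verification that the parameter constraints of Lemma \ref{lm: pointwise product} are met by the admissible range $K(\beta,q)$ of Assumption \ref{ass: parameters}, and that the matrix/vector nature of $\nabla\gamma^*\,l$ is handled componentwise, both of which are routine.
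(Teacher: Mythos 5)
Your proof is correct and takes essentially the same route as the paper: the pointwise product bound of Lemma \ref{lm: pointwise product}, applied componentwise after checking that Assumption \ref{ass: parameters} gives $q>\max(p,d/\delta)$, followed by taking the supremum over $t\in[0,T]$ for the convergence $\nabla\gamma^*\,l_n\to\nabla\gamma^*\,l$. The only difference is that you make explicit the time-continuity of $\nabla\gamma^*\,l$ via the mixed-term bilinear decomposition, a step the paper leaves implicit.
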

\begin{proof}
 In the space $ H^{-\beta}_p$ the norm of the pointwise product  for each $t$
 $$
\| \nabla\gamma^*(t) l_n(t) - \nabla\gamma^*(t) l(t)\|_{ H^{-\beta}_p} =  \| \nabla \gamma^*(t) (l_n(t)-l(t))\|_{ H^{-\beta}_p}
 $$ 
  is bounded by  $c \|\nabla \gamma^*(t)\|_{ H^{1+\delta}_p} \| l_n(t)-l(t) \|_{  H^{-\beta}_q}$ thanks to Lemma  \ref{lm: pointwise product} applied to each component.
  Taking the supremum over time  $t\in[0,T]$ we get 
\[
\sup_{t\in[0,T]}\|\nabla\gamma^*(t) (l_n - l)(t)\|_{ H^{-\beta}_p} \leq c \|\gamma^*\|_{C([0,T]; H^{1+\delta}_p)} \| l_n-l \|_{C([0,T]; H^{-\beta}_q)}
\]
and the right-hand side goes to zero as $n\to \infty$ by assumption. This concludes the proof.
\end{proof}

\begin{prop}\label{pr: continuity of A YW in C}
Let Assumption \ref{ass: parameters} hold.  Suppose 
 $Y_t=\gamma(t, W_t)$ for some $\gamma\in C([0,T]; H_p^{1+\delta})$. Then the
  map $ A^{W,Y}$ is well-defined in the sense of Definition
\ref{def: operator A YW in H general} with $E= C([0,T]; H_q^{-\beta})$ and
\begin{equation} \label{E511}
A^{W,Y}(l)=A^{W,W}(\nabla\gamma^*\, l), 
\end{equation}
for all $l \in E$.
\end{prop}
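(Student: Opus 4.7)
The plan is to establish the identity \eqref{E511} first on the dense subspace $C_c([0,T]\times\mathbb R^d;\mathbb R^d)$ of $E = C([0,T]; H^{-\beta}_q)$, and then to deduce the existence of the continuous extension (in the sense of Definition \ref{def: operator A YW in H general}) by composing two known continuous maps: the pointwise multiplication by $\nabla\gamma^*$, which by Lemma \ref{lm: continuity of pointwise product} sends $C([0,T];H^{-\beta}_q)$ continuously into $C([0,T];H^{-\beta}_p)$, followed by the already-extended occupation time operator $A^{W,W}$, which by Remark \ref{rm: continuity of A WW in H} is continuous on $C([0,T];H^{-\beta}_p)$.

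First I would verify the pointwise identity for smooth $l$. Since $(\delta,p)\in K(\beta,q)$ enforces $\delta>d/p$, the fractional Morrey inequality (Lemma \ref{lm: fractional Morrey ineq}) applied component-wise to $\gamma(t,\cdot)\in H^{1+\delta}_p$, together with the continuity in $t$, yields $\gamma\in C^{0,1}$ exactly as in the last step of the proof of Lemma \ref{lm: phi is C01}. In particular $Y_t=\gamma(t,W_t)$ is a weak Dirichlet process and Proposition \ref{pr: covariation} applies with $S^1=S^2=W$, giving
\begin{equation*}
[W,Y]_t = [\gamma(\cdot,W),W]_t^{\,*} = \int_0^t \nabla\gamma(r,W_r)\,\mathrm dr,
\end{equation*}
so that $\mathrm d[W,Y]_r=\nabla\gamma(r,W_r)\,\mathrm dr$ has components of finite variation. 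Substituting this into Definition \ref{def: operator A YW in C} gives, for every $l\in C_c([0,T]\times\mathbb R^d;\mathbb R^d)$,
\begin{equation*}
A^{W,Y}_t(l) = \Bigl(\int_0^t l^*(r,W_r)\,\nabla\gamma(r,W_r)\,\mathrm dr\Bigr)^{\!*} = \int_0^t \nabla\gamma^*(r,W_r)\,l(r,W_r)\,\mathrm dr = A^{W,W}_t(\nabla\gamma^*\,l),
\end{equation*}
where the last equality uses \eqref{eq: operator AWW} and the fact that $\nabla\gamma^*\,l\in C([0,T];H^{-\beta}_p)$ by Lemma \ref{lm: continuity of pointwise product}, which is the space on which $A^{W,W}$ has already been extended.

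To complete the proof, take an arbitrary $l\in C([0,T];H^{-\beta}_q)$ and pick, by Lemma \ref{lm: density} and Remark \ref{rm: density}, a sequence $(l_n)_n\subset C_c([0,T]\times\mathbb R^d;\mathbb R^d)$ with $l_n\to l$ in $C([0,T];H^{-\beta}_q)$. By Lemma \ref{lm: continuity of pointwise product}, $\nabla\gamma^*\,l_n\to\nabla\gamma^*\,l$ in $C([0,T];H^{-\beta}_p)$, and by Remark \ref{rm: continuity of A WW in H}(2), $A^{W,W}(\nabla\gamma^*\,l_n)\to A^{W,W}(\nabla\gamma^*\,l)$ u.c.p.\ in $\mathcal C$. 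Since on $C_c$ we have $A^{W,Y}(l_n)=A^{W,W}(\nabla\gamma^*\,l_n)$, the sequence $A^{W,Y}(l_n)$ converges u.c.p.\ to a limit depending only on $l$ (not on the approximating sequence, by linearity and continuity). Defining $A^{W,Y}(l)$ as that limit provides the required continuous extension and, by passing to the limit in the identity on $C_c$, yields \eqref{E511} for every $l\in E$.

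The only mildly delicate point is the justification that $\gamma\in C^{0,1}$ from the hypothesis $\gamma\in C([0,T];H^{1+\delta}_p)$, since the Morrey embedding is pointwise in $t$ and we need joint continuity of $(\nabla\gamma,\gamma)$ in $(t,x)$ to invoke Proposition \ref{pr: covariation}; this however is exactly the argument already used in the closing lines of the proof of Lemma \ref{lm: phi is C01}. Everything else is continuous extension along a chain of continuous maps, so no further obstacle arises.
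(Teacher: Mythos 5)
Your proposal is correct and takes essentially the same approach as the paper: establish the identity for smooth $l\in C_c$ via the covariation representation of $[W,Y]$ (which is exactly the paper's display \eqref{eq: AYW = AWW}), then extend by composing the continuous map $l\mapsto\nabla\gamma^*\,l$ (Lemma \ref{lm: continuity of pointwise product}) with the already-extended, continuous $A^{W,W}$ on $C([0,T];H^{-\beta}_p)$. The only differences are cosmetic — you spell out explicitly that $\gamma\in C([0,T];H^{1+\delta}_p)$ forces $\gamma\in C^{0,1}$ via the fractional Morrey inequality (which the paper leaves implicit), and you invoke Proposition \ref{pr: covariation} where the vector-valued Corollary \ref{cor: covariation} is the one that literally applies.
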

\begin{proof}
We start by observing that $C_c ([0,T]\times\mathbb R^d;\mathbb R^d)$ is dense in 
$ E = C([0,T]; H_q^{-\beta})$  by Lemma \ref{lm: density}. Moreover  $A^{W,W} $ is well-defined   in $ C([0,T]; H_p^{-\beta})$ by Remark \ref{rm: continuity of A WW in H} part 1.\ and it is continuous. Let $l_n \to l$ in $E$. 
We want to prove that $A^{W,Y}(l_n)$ converges to the RHS of \eqref{E511}.
Taking into account \eqref{eq: AYW = AWW} and the fact that $l_n \in C_c ([0,T]\times\mathbb R^d;\mathbb R^d)$ we have 
\[
A^{W,Y}(l_n)=A^{W,W}(\nabla\gamma^*\, l_n).
\]   
 Note  that   the map $l \mapsto \nabla \gamma^* \, l$ is continuous from $ C([0,T]; H^{-\beta}_{q})$ to $ C([0,T]; H^{-\beta}_{p})$ thus   $A^{W,W}(\nabla \gamma^* \, l_n ) \to A^{W,W}(\nabla \gamma^* \, l ) $ in $\mathcal C$ because of compositions of continuous maps. This concludes the proof. 
\end{proof}
 
 \begin{rem}
 We observe that in \cite{issoglio_jing16} the authors deal with the singular integral term $\int_0^t Z_s b(s,W_s)\mathrm ds $ by replacing it with known terms. In particular, they \emph{define it using the chain rule} \eqref{eq: chain rule in H} with $l= \nabla u^*\, b$ but without proving it. Their virtual solution coincide with the one constructed here.
 \end{rem}
Finally we end this section with a result on classical drivers $g$. We show that for a function $g\in C([0,T]; H^{s}_r)\cap L^\infty_{\text{loc}}([0,T]\times \mathbb R^d; \mathbb R^d)$ with $-\frac12<s\leq0$ and $2\leq r< \infty$,  then the operator $A^{W,W}$ defined as an extension to $E=C([0,T]; H^{s}_{r} )$ and evaluated in  $g$ coincides with the classical integral $\int_0^\cdot g(s, W_s) \mathrm ds$. The proof of the theorem below is postponed to the Appendix for ease of reading.
\begin{theorem}\label{thm: A(b)=int b for Lp functions}
Let $g\in C([0,T]; H^{s}_r)\cap L^\infty_{\text{loc}}([0,T]\times \mathbb R^d; \mathbb R^d)$ with $-\frac12<s\leq 0$ and
 $2\leq r < \infty$, with $g$ column vector. Suppose that $A^{W,W}$ is well-defined in the sense of Definition \ref{def: operator A YW in H general} with $E= C([0,T]; H^s_r)$. Then 
\begin{equation}\label{eq: A(b)=int b for Lp functions}
A_\cdot^{W,W}(g) = \int_0^\cdot g(s, W_s) \mathrm ds.
\end{equation}
\end{theorem}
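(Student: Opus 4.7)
\bigskip

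\noindent\textbf{Proof plan.} The strategy is a standard approximation argument: exhibit a sequence of smooth compactly supported drivers $g_n$ that converges to $g$ both in the topology of $E = C([0,T]; H^s_r)$ (so that the extended operator behaves continuously) \emph{and} pointwise in a way compatible with the classical integral, and then identify the two limits. Since the extension of $A^{W,W}$ is built via continuity from $C_c([0,T]\times \mathbb R^d; \mathbb R^d)$, see Remark \ref{rm: continuity of A WW in H} and Lemma \ref{lm: density}, the left-hand side of \eqref{eq: A(b)=int b for Lp functions} will follow immediately from the definition applied to the approximation; the work is in making the right-hand side converge path by path.

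\medskip

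The first step is to construct $g_n \in C_c^\infty([0,T]\times \mathbb R^d; \mathbb R^d)$ such that (i) $g_n \to g$ in $C([0,T]; H^s_r)$; (ii) $g_n(s,x) \to g(s,x)$ for Lebesgue-a.e.\ $(s,x) \in [0,T]\times \mathbb R^d$; and (iii) there is a locally uniform bound $\sup_n \|g_n\|_{L^\infty([0,T]\times K)} \leq C_K$ on every compact $K \subset \mathbb R^d$. This can be achieved by mollifying $g$ in the spatial variable with a standard nonnegative mollifier $\rho_{\epsilon_n}$ and then multiplying by a smooth spatial cutoff $\chi_{R_n}$ equal to $1$ on a ball of radius $R_n \uparrow \infty$, with the parameters $\epsilon_n \downarrow 0$ chosen appropriately. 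Point (iii) follows from the local boundedness assumption on $g$ (since convolution with a probability kernel preserves local $L^\infty$ bounds), and point (ii) from the Lebesgue differentiation theorem. Point (i) uses that spatial mollification is a uniformly bounded family of operators on $H^s_r$ converging strongly to the identity, combined with the uniform continuity of $t \mapsto g(t,\cdot) \in H^s_r$ on the compact interval $[0,T]$ in order to promote pointwise-in-$t$ convergence to uniform-in-$t$ convergence; the cutoff $\chi_{R_n}$ acts as a pointwise multiplier on $H^s_r$ (for $|s|<1$) and can be shown to converge to the identity in operator norm on the image of the mollified sequence.

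\medskip

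The second step uses the two topologies to identify the limits. By construction $g_n \in C_c([0,T]\times\mathbb R^d; \mathbb R^d)$, so formula \eqref{eq: operator AWW} gives
\begin{equation*}
A^{W,W}_t(g_n) = \int_0^t g_n(s, W_s)\, \mathrm ds, \qquad t \in [0,T].
\end{equation*}
By the continuity of the extended operator (Remark \ref{rm: continuity of A WW in H}, part 2) together with (i), the left-hand side converges u.c.p.\ to $A^{W,W}_\cdot(g)$. For the right-hand side, fix $\omega$ in the full-measure set where $r \mapsto W_r(\omega)$ is continuous; then $W_\cdot(\omega)$ stays in a compact set $K(\omega) \subset \mathbb R^d$ on $[0,T]$, and by (iii) we have $|g_n(s, W_s(\omega))| \leq C_{K(\omega)}$ uniformly in $n$ and $s$. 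By (ii) and Fubini, for a.e.\ $\omega$ we have $g_n(s, W_s(\omega)) \to g(s, W_s(\omega))$ for Lebesgue-a.e.\ $s \in [0,T]$ (here one uses that the law of $(s, W_s)$ on $[0,T]\times \mathbb R^d$ has a density with respect to Lebesgue measure, so that a null set for Lebesgue measure is null for the occupation measure of $W$). Dominated convergence then yields $\int_0^t g_n(s, W_s) \mathrm ds \to \int_0^t g(s, W_s) \mathrm ds$ uniformly in $t \in [0,T]$ almost surely, and uniqueness of u.c.p.\ limits concludes the proof.

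\medskip

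The main obstacle I expect is precisely the point just mentioned: translating a.e.-convergence of $g_n$ with respect to Lebesgue measure on $[0,T]\times \mathbb R^d$ into a.e.-convergence along the Brownian path $(s, W_s)$. The resolution is to invoke that the occupation measure of $(s,W_s)$ is absolutely continuous with respect to Lebesgue measure (via the Gaussian density of $W_s$), so that Lebesgue-null exceptional sets are harmless. A secondary technical point is constructing $(g_n)$ with the three required properties simultaneously; this is the reason for doing mollification \emph{and} truncation rather than just one of the two, and for choosing the scales $\epsilon_n, R_n$ carefully so that convergence in $C([0,T]; H^s_r)$ is preserved through the cutoff step.
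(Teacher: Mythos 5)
Your proposal is correct but organises the argument quite differently from the paper. The paper proceeds in two layers: first it proves the identity for $g$ bounded with compact support, mollifying by $P_N g := g * \phi_N$ and controlling the integral side by passing under the expectation ($\mathbb{E}[\sup_t |\int_0^t(P_N g - g)(s,W_s)\,\mathrm d s|] \leq \int_0^T\int |P_N g - g|\, p_s(y)\,\mathrm d y\,\mathrm d s$, then dominated convergence); it then handles the general $g$ by cutting off with $\chi_M$ \emph{and} localising via the stopping times $\tau_M = \inf\{t : |W_t| > M\}$, showing $A^{W,W}_{\cdot\wedge\tau_M}(g\chi_M) = A^{W,W}_{\cdot\wedge\tau_M}(g)$ through an approximation in $C_c$ and then sending $M\to\infty$. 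The stopping times let the paper avoid proving that $g\chi_M \to g$ in $C([0,T]; H^s_r)$. You instead perform mollification and truncation in one stroke and identify the two limits by a single pass: continuity of the extended $A^{W,W}$ on the operator side, and a pathwise dominated-convergence argument on the integral side, transporting the Lebesgue-a.e.\ convergence of $g_n$ to a.e.\ convergence along $(s,W_s(\omega))$ via absolute continuity of the occupation measure. This is a clean, fully valid alternative; your pathwise DCT, conditioning on the continuity of $W_\cdot(\omega)$ and using local boundedness, replaces the paper's expectation-level DCT. The one imprecision to fix is the sentence claiming $\chi_{R_n}$ ``converges to the identity in operator norm on the image of the mollified sequence'': the multiplication operators $M_{\chi_R}$ do \emph{not} converge in operator norm on $H^s_r$. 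What is true, and what you actually need, is that the family $(M_{\chi_R})_R$ is equibounded on $H^s_r$ (since $\chi_R(x) = \chi(|x|-R)$ has $C^1$-norm uniform in $R$, so Triebel's multiplier bound is $R$-independent), that $M_{\chi_R} h \to h$ strongly for each fixed $h$ (by density of $C_c^\infty$), and that $\{g(t,\cdot):t\in[0,T]\}$ is compact in $H^s_r$; combining these via exactly the equiboundedness-plus-compactness lemma (the paper's Lemma \ref{lm: projection converge uniformly in compacts}) yields the required uniform-in-$t$ convergence. With that correction your route goes through and is arguably more streamlined, while the paper's stopping-time device buys the ability to sidestep the cutoff-convergence analysis entirely.
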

Note that the operator $A^{W,W}$ is well-defined for example if $s=-\beta$ and $r=p$ see Remark \ref{rm: continuity of A WW in H}.

\begin{coroll}[chain rule for $L^\infty_{\text{loc}}$]
If $g\in L^\infty_{\text{loc}}([0,T]\times \mathbb R^d;\mathbb R^d)\cap C([0,T]; H^{-\beta}_p)$ then
\[
\int_0^tg(s, W_s) \mathrm ds = \phi(t, W_t) - \phi(0, W_0) - \int_0^t \nabla \phi^* (s, W_s) \mathrm d W_r,
\]
where $\phi$ is the solution of \eqref{eq: PDE for phi} with $\Psi\in H^{1+\delta}_p$, given by \eqref{eq: mild sol phi}.
\end{coroll}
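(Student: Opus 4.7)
The strategy is simply to chain together the two previously established identities for the occupation time operator $A^{W,W}$ evaluated at~$g$: Theorem~\ref{thm: A(b)=int b for Lp functions}, which identifies it with a classical Lebesgue integral, and Proposition~\ref{pr: chain rule in H}, which identifies it with the desired $\phi$-expansion.

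First I would verify that $g$ satisfies the hypotheses of Theorem~\ref{thm: A(b)=int b for Lp functions} with $s=-\beta$ and $r=p$. Indeed $-\tfrac12<-\beta\le 0$ because $\beta\in(0,1/2)$ by Assumption~\ref{ass: parameters}, and $2\le p<\infty$ in both the $d\ge 2$ and $d=1$ cases of Assumption~\ref{ass: parameters} (this is explicit in~\eqref{eq: set admissible kappa d=1}, and follows from $p>d/\delta\ge d>1$ together with $\delta<1$ in the multidimensional case, which gives $p>2$ whenever $d\ge2$). Moreover $A^{W,W}$ is well defined on $E=C([0,T];H^{-\beta}_p)$ by Remark~\ref{rm: continuity of A WW in H} part~1. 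Hence Theorem~\ref{thm: A(b)=int b for Lp functions} yields
\[
A^{W,W}_t(g)\;=\;\int_0^t g(s,W_s)\,\mathrm ds,\qquad t\in[0,T].
\]

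Next, since $g\in C([0,T];H^{-\beta}_p)$ and $\Psi\in H^{1+\delta}_p$ with the parameters satisfying Assumption~\ref{ass: parameters}, Proposition~\ref{pr: chain rule in H} applied with $l=g$ gives the representation
\[
A^{W,W}_t(g)\;=\;\phi(t,W_t)-\phi(0,W_0)-\int_0^t \nabla\phi^{*}(r,W_r)\,\mathrm dW_r.
\]
Equating the two expressions for $A^{W,W}_t(g)$ yields the claimed chain rule.

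There is no real obstacle here: the two ingredients already do all the work, and the only verification is that the parameter constraints $(-\beta,p)$ fall inside both admissibility ranges. The corollary is essentially a bookkeeping consequence, showing that when $g$ happens to be locally bounded the abstract object $A^{W,W}(g)$ reduces to the classical pathwise integral while still enjoying the PDE-based representation that will be exploited to analyse the BSDE.
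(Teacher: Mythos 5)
Your argument is correct and coincides with the paper's own proof, which likewise obtains the identity by combining Theorem \ref{thm: A(b)=int b for Lp functions} with Proposition \ref{pr: chain rule in H} applied to $l=g$ (the admissibility of $s=-\beta$, $r=p$ being noted right after that theorem). Your extra parameter bookkeeping is fine but adds nothing beyond what the paper already records.
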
	
\begin{proof}
This follows by Theorem \ref{thm: A(b)=int b for Lp functions} and Proposition \ref{pr: chain rule in H} with $l=g$.
\end{proof}

\subsection{Existence for the BSDE and  Feynman-Kac representation}\label{ssc: existence}

Here we 
show that the solution of PDE  \eqref{eq: PDE for u} can be used to construct a solution to  BSDE 
 \eqref{eq: BSDE classical form} when $b\in C([0,T]; H^{-\beta}_{q})$.  In particular, in Theorem \ref{thm: Markovian BSDE - existence of sol} we construct a solution to BSDE  \eqref{eq: BSDE classical form} with $\xi = \Phi(W_T)$ using the solution to the associated PDE. As a corollary of Theorem \ref{thm: Markovian BSDE - existence of sol} we obtain a Feynman-Kac representation in Corollay \ref{cor: Feynman-Kac repr}.

For ease of reading, we rewrite the formal meaning of the 
BSDE \eqref{eq: BSDE classical form} under consideration:
\begin{equation*}
Y_t = \Phi(W_T)  +\int_t^T Z_r b(r, W_r) \mathrm d r  + \int_t^T f(r, W_r, Y_r, Z_r)  \mathrm d r -  \int_t^T Z_r \mathrm d W_r.
 \end{equation*}

\begin{theorem}\label{thm: Markovian BSDE - existence of sol}
Let Assumption \ref{ass: parameters} and Assumption \ref{ass: coefficients} hold and let $b\in C([0,T]; H^{-\beta}_{q})$.
We denote by $u$ be the unique mild solution to \eqref{eq: PDE for u}.  Then $Y_t= u(t, W_t)$ is a solution of \eqref{eq: BSDE classical form} according to Definition \ref{def: solution BSDE integral A} with $E= C([0,T]; H^{-\beta}_{q})$.
\end{theorem}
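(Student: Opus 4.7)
The strategy is to verify each of the four items in Definition \ref{def: solution BSDE integral A} by combining the regularity of $u$ provided by Theorem \ref{thm: Issoglio Jing} with the chain rule of Proposition \ref{pr: chain rule in H}. From Theorem \ref{thm: Issoglio Jing} we have $u\in C([0,T]; H^{1+\delta}_p)\cap C^{0,1}$, and by the fractional Morrey inequality (Lemma \ref{lm: fractional Morrey ineq}) $u$ and $\nabla u$ are bounded. Item (iii) is immediate: $Y_T=u(T,W_T)=\Phi(W_T)=\xi$. Item (i) follows from Proposition \ref{pr: continuity of A YW in C} applied with $\gamma = u$: the operator $A^{W,Y}$ extends to $E=C([0,T];H^{-\beta}_q)$ and $A^{W,Y}(b)=A^{W,W}(\nabla u^*\,b)$. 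Item (ii) then follows since $\nabla u^*\,b\in C([0,T];H^{-\beta}_p)$ by Lemma \ref{lm: continuity of pointwise product}, and Proposition \ref{pr: chain rule in H} guarantees that $A^{W,W}(\nabla u^*\,b)$ is a martingale-orthogonal process.

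The core of the proof is item (iv). The idea is to view $u$ itself as the mild solution of the auxiliary linear PDE \eqref{eq: PDE for phi} with source $l:=-\nabla u^*\,b - f(\cdot,\cdot,u,\nabla u)$ and terminal $\Psi=\Phi$. To invoke Proposition \ref{pr: chain rule in H} one must check that $l\in C([0,T];H^{-\beta}_p)$ and $\Phi\in H^{1+\delta}_p$. The first term of $l$ is handled by Lemma \ref{lm: continuity of pointwise product}; for the nonlinear term, the Lipschitz bound in Assumption \ref{ass: coefficients} combined with boundedness of $u,\nabla u$ and $\sup_t\|f(t,\cdot,0,0)\|_{L^p}<\infty$ yields $f(\cdot,\cdot,u,\nabla u)\in C([0,T];L^p)\hookrightarrow C([0,T];H^{-\beta}_p)$, and the same estimates give $f(\cdot,\cdot,u,\nabla u)\in L^\infty_{\text{loc}}([0,T]\times\mathbb R^d;\mathbb R^d)$. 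The terminal condition hypothesis on $\Phi$ is guaranteed by Assumption \ref{ass: coefficients} since $1+\delta+2\gamma>1+\delta$.

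Applying Proposition \ref{pr: chain rule in H} with this choice of $\phi=u$ gives
\[
A^{W,W}_t\bigl(-\nabla u^*\,b - f(\cdot,\cdot,u,\nabla u)\bigr) = Y_t - Y_0 - \int_0^t \nabla u^*(r,W_r)\,\mathrm d W_r.
\]
By linearity of the extension of $A^{W,W}$ and Theorem \ref{thm: A(b)=int b for Lp functions} applied to the $L^\infty_{\text{loc}}$ term, $A^{W,W}_t(f(\cdot,\cdot,u,\nabla u)) = \int_0^t f(r,W_r,Y_r,\nabla u^*(r,W_r))\,\mathrm d r$. Moreover, Corollary \ref{cor: covariation} applied componentwise to $Y=u(\cdot,W)$ yields $\mathrm d[Y,W]_r = \nabla u^*(r,W_r)\,\mathrm d r$, so $\frac{\mathrm d[Y,W]_r}{\mathrm d r}=\nabla u^*(r,W_r)$. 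Combining with the identity $A^{W,Y}(b)=A^{W,W}(\nabla u^*\,b)$ from step (i) and rearranging gives
\[
M_t = Y_t - Y_0 + A^{W,Y}_t(b) + \int_0^t f\!\left(r,W_r,Y_r,\tfrac{\mathrm d[Y,W]_r}{\mathrm d r}\right)\mathrm d r = \int_0^t \nabla u^*(r,W_r)\,\mathrm d W_r,
\]
which is a square-integrable $\mathcal F$-martingale because $\nabla u$ is bounded.

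The main obstacle is the careful bookkeeping in item (iv): reinterpreting the rough semilinear PDE for $u$ as a linear PDE with a source lying in the admissible space, justifying the linear splitting of $A^{W,W}$ across the two components of the source, and invoking Theorem \ref{thm: A(b)=int b for Lp functions} to bridge between the abstract operator and the classical Lebesgue integral of the nonlinear part. A secondary technical point is to handle transpose/Frobenius conventions so that the identification $Z_r=\nabla u^*(r,W_r)$ is consistent throughout and the matrix-valued covariation computation goes through.
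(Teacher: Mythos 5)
Your proof is correct and follows essentially the same route as the paper: verify (i)–(iii) via Theorem \ref{thm: Issoglio Jing}, Proposition \ref{pr: continuity of A YW in C}, Lemma \ref{lm: continuity of pointwise product} and Proposition \ref{pr: chain rule in H}, then for (iv) read off $u$ as the mild solution of the linear PDE \eqref{eq: PDE for phi} with source $l=-\nabla u^*\,b-\tilde f$, split $A^{W,W}$ by linearity, invoke Theorem \ref{thm: A(b)=int b for Lp functions} for the $\tilde f$ term, and identify $M_t=\int_0^t\nabla u^*(r,W_r)\,\mathrm dW_r$. One small imprecision: you cite \emph{boundedness} of $u,\nabla u$ to get $\tilde f\in C([0,T];L^p)$, but boundedness on $\mathbb R^d$ does not give $L^p$ membership; the correct ingredient (which you have available) is that $u(t),\nabla u(t)\in L^p$ uniformly in $t$ because $u\in C([0,T];H^{1+\delta}_p)$, which is exactly how the paper argues.
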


\begin{proof}
First we observe that thanks to Theorem \ref{thm: Issoglio Jing} we have $u\in C([0,T]; H^{1+\delta}_p)$. Thus by Proposition \ref{pr: continuity of A YW in C} the operator  $A^{Y,W} $ appearing in Definition \ref{def: solution BSDE integral A}   is well-defined in $E= C([0,T]; H^{-\beta}_{q})$ and
we have  
\begin{equation} \label{EF518}
A^{W,Y}_t(b)  =  A^{W,W}_t(\nabla u^*\, b).
\end{equation}
 This is a  martingale-orthogonal  process by Proposition 
\ref{pr: chain rule in H} with $l= \nabla u^*\,b$. The latter is an element of $  C([0,T]; H^{-\beta}_{p})$, and this is shown by  Lemma \ref{lm: continuity of pointwise product}.
Moreover $u(T)= \Phi$ implies that $ Y_T = u(T, W_T)= \Phi( W_T) $ so that
parts (i)-(iii) of
 Definition \ref{def: solution BSDE integral A} are verified.
The last point to check is part (iv) in the same Definition, namely that
 $$M_t:= Y_t -Y_0 +A^{W,Y}_t(b)  +\int_0^t f\left (r, W_r, Y_r, \frac{\mathrm d[Y,W]_r}{\mathrm dr}\right) \mathrm dr$$ is a square integrable martingale.   The term with the driver $f$ becomes
\begin{align} \label{EEE}
\int_0^t f\left (r, W_r, Y_r, \frac{\mathrm d[Y,W]_r}{\mathrm dr}\right) \mathrm dr &= \int_0^t f\left (r, W_r, u(r, W_r),\nabla u (r, W_r) \right) \mathrm dr \nonumber \\
&= \int_0^t \tilde f(r, W_r) \mathrm dr,
\end{align}
where $\tilde f(t,x)= f(t, x, u(t,x), \nabla u(t,x))$.  Since  $u \in C^{0,1}$ and $f$ is continuous then $\tilde f\in L^\infty_{\text{loc}}([0,T]\times \mathbb R^d)$.
We also have that $\tilde f \in C([0,T]; L^p)$ since $f$ is Lipschitz in $(y,z)$ uniformly in $t, x$, and $x\mapsto f(t,x,0,0)$ is an element of $L^p$ uniformly in $t\in[0,T]$ by Assumption \ref{ass: coefficients} and $u(t), \nabla u(t)$ are in $L^p$ uniformly in $t$ since $u \in C([0,T]; H^{1+\delta}_p)$. 
 So in particular $\tilde f \in C([0,T]; H^{0}_p)$ and hence by Theorem \ref{thm: A(b)=int b for Lp functions} we have  
\[
\int_0^t \tilde f(r, W_r) \mathrm dr = A^{W,W}_t(\tilde f).
\]
Moreover  by \eqref{EF518} and  the linearity of $A^{W,W}$ one gets
\begin{align*}
 M_t &=  Y_t -Y_0 +A^{W,W}_t(\nabla u^*\, b ) + A^{W,W}_t(\tilde f)\\
 &=  Y_t -Y_0 +A^{W,W}_t(\nabla u^*\, b + \tilde f)\\
 &=  Y_t -Y_0 -A^{W,W}_t(-\nabla u^*\, b - \tilde f).
\end{align*}
 Now we apply the chain rule  to $A^{W,W}_t(-\nabla u^*\, b - \tilde f)$, namely Proposition \ref{pr: chain rule in H} with $l=-\nabla u^*\, b - \tilde f$ on the RHS of \eqref{eq: PDE for phi}.
Note that in this case \eqref{eq: chain rule in H} holds for $\phi = u$ because the function $u$ verifies \eqref{eq: mild sol phi} with $l=-\nabla u^*\, b - \tilde f $, see indeed \eqref{eq: mild sol u}. Thus we get 
\begin{align*}
M_t = & Y_t -Y_0 -A^{W,W}_t(-\nabla u^*\, b - \tilde f)\\
= & u(t, W_t) - u(0, W_0) \\ 
&-  u(t, W_t) + u(0, W_0)  +  \int_0^t \nabla u^* (r, W_r) \mathrm  d W_r 
\end{align*}
so that
\[
M_t   =  \int_0^t \nabla u^*(r, W_r) \mathrm  d W_r,
\]
which is clearly a  square integrable   $\mathcal F$-martingale  because $\nabla u^*$ is uniformly bounded since $u\in C^{1+\alpha}$ by Theorem \ref{thm: Issoglio Jing}.
\end{proof}

\begin{coroll}\label{cor: Feynman-Kac repr}
Under the hypothesis of Theorem \ref{thm: Markovian BSDE - existence of sol} we have the Feynman-Kac (implicit) representation for the solution $u$ of PDE \eqref{eq: PDE for u} given by
\begin{align*}
u(s,x_0) &= \mathbb E \bigg[ \Phi( x_0+W_{T -s}) \\
&+ \int_s^T f( r,W_r +x_0, u(r, W_r+x_0), \nabla u (r, W_r +x_0) ) \mathrm dr \\
&+ A_T^{W ,W } ((\nabla u^* b)(x_0 + \cdot ) ) -A_s^{W ,W } ((\nabla u^* b)(x_0 + \cdot ) )  \bigg]
\end{align*}
for all $s\in[0,T]$ and $x_0\in \mathbb R^d$.
\end{coroll}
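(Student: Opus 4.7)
The natural strategy is to combine Theorem \ref{thm: Markovian BSDE - existence of sol} with a Markov-property argument. Inspecting the proof of that theorem shows that $Y_t = u(t, W_t)$ satisfies Definition \ref{def: solution BSDE integral A}(iv) with $A^{W,Y}(b) = A^{W,W}(\nabla u^{*}\, b)$ and with martingale component $M_t = \int_0^t \nabla u^{*}(r, W_r)\, \mathrm dW_r$. Rearranging the defining identity between times $s$ and $T$ and using $Y_T = \Phi(W_T)$ yields
\[
u(s, W_s) + (M_T - M_s) = \Phi(W_T) + A^{W,W}_T(\nabla u^{*}\, b) - A^{W,W}_s(\nabla u^{*}\, b) + \int_s^T f\bigl(r, W_r, u(r, W_r), \nabla u(r, W_r)\bigr)\, \mathrm dr.
\]
Taking $\mathbb E[\cdot \mid \mathcal F_s]$ annihilates the martingale increment, and since $u(s, W_s)$ is $\mathcal F_s$-measurable, we obtain
\[
u(s, W_s) = \mathbb E\Bigl[\Phi(W_T) + A^{W,W}_T(\nabla u^{*}\, b) - A^{W,W}_s(\nabla u^{*}\, b) + \int_s^T f(r, W_r, u(r, W_r), \nabla u(r, W_r))\, \mathrm dr \,\Big|\, \mathcal F_s\Bigr].
\]

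I would then apply the Markov property of Brownian motion to turn this conditional expectation (a function of $W_s$) into an unconditional expectation evaluated at the deterministic point $x_0$. Concretely, conditional on $\mathcal F_s$, the shifted process $\widehat W_u := W_{s+u} - W_s$ ($u \ge 0$) is a standard Brownian motion starting at $0$ and independent of $\mathcal F_s$. Each of the three terms inside the conditional expectation depends only on the path of $W$ on $[s,T]$, i.e.\ jointly on $W_s$ and $\widehat W$, so the right-hand side above is of the form $F(W_s)$ for some Borel $F$. Freezing $W_s = x_0$ yields the right-hand side of the corollary, the spatial translation by $x_0$ appearing because the Brownian motion started at $x_0$ has the law of $x_0 + \widehat W$.

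The main obstacle is checking that this Markov reduction is compatible with the abstractly defined operator $A^{W,W}$, since for distributional integrands $A^{W,W}$ is not given pathwise. For smooth $\ell \in C_c$ the increment $A^{W,W}_T(\ell) - A^{W,W}_s(\ell)$ equals $\int_s^T \ell(r, W_r)\, \mathrm dr$, which is manifestly measurable with respect to $\sigma(W_r : r \in [s,T])$, and spatial translation of $\ell$ by $x_0$ exactly corresponds to translating the Brownian path by $x_0$. Both properties are preserved under the u.c.p.\ continuous extension provided by Lemma \ref{lm: density} and Proposition \ref{pr: continuity of A WW in C}, so passing to the limit $\ell_n \to \nabla u^{*}\, b$ in $C([0,T]; H^{-\beta}_p)$ (valid since $\nabla u^{*}\, b$ belongs to this space by Lemma \ref{lm: continuity of pointwise product}) produces the term $A^{W,W}_T((\nabla u^{*} b)(x_0 + \cdot)) - A^{W,W}_s((\nabla u^{*} b)(x_0 + \cdot))$ in the stated formula.
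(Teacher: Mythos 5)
Your overall strategy (condition on $\mathcal F_s$, kill the martingale increment, then use the Markov property to ``freeze'' $W_s=x_0$) is natural, but it is not the paper's argument and, as written, it has two genuine gaps. First, the freezing step only identifies $u(s,\cdot)$ with the frozen conditional expectation up to a Lebesgue-null set of $x_0$: from $u(s,W_s)=F(W_s)$ a.s.\ you get $u(s,x_0)=F(x_0)$ only for a.e.\ $x_0$, and upgrading this to \emph{every} $x_0$ requires continuity in $x_0$ of the right-hand side, including of $x_0\mapsto \mathbb E\bigl[A_T^{W,W}((\nabla u^*b)(x_0+\cdot))-A_s^{W,W}((\nabla u^*b)(x_0+\cdot))\bigr]$, which you do not establish. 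The paper sidesteps this entirely: it translates the \emph{coefficients}, setting $\hat b(t,x)=b(t,x+x_0)$, $\hat f(t,x,y,z)=f(t,x+x_0,y,z)$, $\hat\Phi(x)=\Phi(x+x_0)$, notes that the corresponding PDE solution is $\hat u(t,x)=u(t,x+x_0)$, applies Theorem \ref{thm: Markovian BSDE - existence of sol} to the translated BSDE so that $Y_t=u(t,x_0+W_t)$ solves it, and then simply takes expectation at $t=0$ (where $\mathcal F_0$ is trivial), obtaining the identity at the exact point $x_0$ for every $x_0$, with no conditioning and no null-set issue.

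Second, your identification of the conditional expectation of the rough term with the stated quantity is not justified by saying the relevant properties ``are preserved under the u.c.p.\ continuous extension''. The operator $A^{W,W}$ is defined only through u.c.p.\ limits, i.e.\ convergence in probability, so exchanging this limit with $\mathbb E[\,\cdot\mid\mathcal F_s]$ requires a uniform integrability argument (or the pathwise chain-rule representation of Proposition \ref{pr: chain rule in H}), which is missing. Moreover, even in the smooth case the bookkeeping is off for $s>0$: conditioning on $\mathcal F_s$ and freezing $W_s=x_0$ turns $\int_s^T \ell(r,W_r)\,\mathrm dr$ into $\mathbb E\bigl[\int_s^T \ell(r,x_0+\widehat W_{r-s})\,\mathrm dr\bigr]$ with $\widehat W_{r-s}=W_r-W_s$ of variance $r-s$, whereas the term you claim to recover, $A_T^{W,W}(\ell(x_0+\cdot))-A_s^{W,W}(\ell(x_0+\cdot))$, corresponds to $\int_s^T \ell(r,x_0+W_r)\,\mathrm dr$ with variance $r$; these coincide only at $s=0$, which is in fact the only case the paper proves directly before asserting the general statement. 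So your proof, if repaired, would at best reproduce the $s=0$ case (or a time-shifted version for $s>0$), and the key steps bridging the abstract operator with the frozen conditional expectation still need to be supplied.
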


\begin{proof}
For ease of proof we show the result for $s=0$. We set $\hat f(t,x,y,z) := f(t,x+x_0,y,z), \hat\Phi(x):= \Phi(x+x_0)$ and  (formally) $\hat b(t,x):= b(t, x+x_0)$.
Let $\hat u$ be the solution of PDE \eqref{eq: PDE for u} where the coefficients $b,f,\Phi$ are replaced by $\hat b, \hat f$ and $\hat \Phi$. It is easy to see that $\hat u(t,x)= u(t,x+x_0)$,
where $u$ is the solution to the original PDE.

If we now consider BSDE  \eqref{eq: BSDE classical form}    where the coefficients $b,f,\Phi$ are replaced by $\hat b, \hat f$ and $\hat \Phi$, then by Theorem \ref{thm: Markovian BSDE - existence of sol} we know that    $Y_t = \hat u(t, W_t) = u(t, x_0+ W_t)$ is a solution according to Definition  \ref{def: solution BSDE integral A}. In particular we have for all $t\in[0,T]$
\begin{align*}
Y_t = & \hat \Phi( W_T ) + \int_t^T \hat f\left( r,W_r , Y_r ,  \frac{[Y,W]_r}{\mathrm dr}\right) \mathrm dr \\
& + A_{T}^{W ,Y } ( \hat  b) -A_{t}^{W ,Y } ( \hat  b) -( M_T -M_t ),                                                                                                          
\end{align*}
where $M_t$ is an $\mathcal F_t$-martingale. We now use the explicit expression of $Y$ in terms of $u$ and Corollary \ref{cor: covariation} to replace the bracket, and taking the expectation we get for $t=0$
\begin{align*}
u(0,x_0) = & \mathbb E\bigg[ \Phi(W_T+x_0) \\
& + \int_0^T f\left ( r,W_r +x_0, u(r, W_r+x_0), \nabla u(r, W_r+x_0){\mathrm dr}\right) \mathrm dr \\
& + A_T^{W ,W } ( (\nabla u^* b)(x_0 + \cdot ) )\bigg],
\end{align*}
having used Proposition \ref{pr: continuity of A YW in C} in the last step to replace $A^{Y,W}$ with $A^{W,W}$.
\end{proof}

We conclude with a result about the uniqueness of the solution $Y$ in the class $Y_t=\gamma(t, W_t)$ for certain $\gamma$s.
\begin{prop}\label{prop: Markovian BSDE - uniq of sol}
Let Assumption \ref{ass: parameters} and Assumption \ref{ass: coefficients} hold and let $b\in C([0,T]; H^{-\beta}_{q})$. If the solution of \eqref{eq: BSDE classical form} according to Definition \ref{def: solution BSDE integral A} with $E=C([0,T]; H^{-\beta}_{q})$ can be written as $Y_t=\gamma(t, W_t)$ for some $\gamma\in C([0,T]; H^{1+\delta}_p)$, then it is unique. 
\end{prop}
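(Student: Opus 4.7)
The plan is to show that any $\gamma$ as in the statement must itself be the unique mild solution $u$ of the associated PDE \eqref{eq: PDE for u}, so uniqueness of the BSDE solution in the stated class is forced by Theorem \ref{thm: Issoglio Jing}.

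First, assume $Y_t = \gamma(t,W_t)$ with $\gamma\in C([0,T];H^{1+\delta}_p)$ satisfies Definition \ref{def: solution BSDE integral A} with $E=C([0,T];H^{-\beta}_{q})$. Corollary \ref{cor: covariation} gives $[Y,W]_t=\int_0^t\nabla\gamma(r,W_r)\,\mathrm dr$, so that the driver term in \eqref{eq: BSDE integral A} reduces to $\int_0^t \tilde f(r,W_r)\,\mathrm dr$ with $\tilde f(r,x):=f(r,x,\gamma(r,x),\nabla\gamma(r,x))$. Using Proposition \ref{pr: continuity of A YW in C} we rewrite $A^{W,Y}_t(b)=A^{W,W}_t(\nabla\gamma^*\,b)$, and Lemma \ref{lm: continuity of pointwise product} together with Assumption \ref{ass: coefficients} shows that $\nabla\gamma^*\,b+\tilde f\in C([0,T];H^{-\beta}_p)$. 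The terminal condition yields $\gamma(T,\cdot)=\Phi$ (as elements of $H^{1+\delta}_p$, using that $W_T$ has full support and the embedding of $H^{1+\delta}_p$ into continuous functions from Lemma \ref{lm: fractional Morrey ineq}).

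Next, apply the chain rule of Proposition \ref{pr: chain rule in H} with $l=-(\nabla\gamma^*\,b+\tilde f)$ and $\Psi=\Phi$, letting $\phi$ be the corresponding function given by \eqref{eq: mild sol phi}. Substituting this identity into the expression for the martingale $M_t$ from Definition \ref{def: solution BSDE integral A}(iv) gives
\[
M_t-\int_0^t\nabla\phi^*(r,W_r)\,\mathrm dW_r=\bigl(\gamma(t,W_t)-\phi(t,W_t)\bigr)+\bigl(\phi(0,W_0)-\gamma(0,W_0)\bigr).
\]
The left-hand side is a continuous martingale (difference of two martingales), while at $t=T$ the right-hand side equals the deterministic constant $\phi(0,W_0)-\gamma(0,W_0)$ because $\gamma(T,\cdot)=\phi(T,\cdot)=\Phi$. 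Taking conditional expectations then forces this martingale to be constant, and evaluating at $t=0$ shows it is identically zero; hence $\gamma(t,W_t)=\phi(t,W_t)$ a.s.\ for every $t$, and by the full support of $W_t$ plus continuity in $x$, we conclude $\gamma(t,\cdot)=\phi(t,\cdot)$ for all $t$.

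Inserting $\gamma=\phi$ into the definition of $\phi$ in \eqref{eq: mild sol phi} yields exactly the mild formulation \eqref{eq: mild sol u} of PDE \eqref{eq: PDE for u}. By Theorem \ref{thm: Issoglio Jing} the mild solution is unique in $C([0,T];H^{1+\delta}_p)$, so $\gamma=u$, and therefore $Y_t=u(t,W_t)$ is the unique solution of the stated form. The delicate step is the martingale-identification argument: we must produce the companion function $\phi$ via the chain rule, verify that the relevant integrand lies in $C([0,T];H^{-\beta}_p)$ (the regularity window from Assumption \ref{ass: parameters} is essential here so that $\nabla\gamma^*\,b$ makes sense as a pointwise product), and then transfer the a.s.\ equality $\gamma(t,W_t)=\phi(t,W_t)$ into a pointwise equality of functions, after which PDE uniqueness closes the argument.
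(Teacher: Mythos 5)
Your proof is correct, and it takes a genuinely different route from the paper. The paper compares two putative solutions $Y^1,Y^2$ directly: for each $i$ it introduces an auxiliary mild solution $h^i$ of \eqref{eq: PDE for phi} with right-hand side $(\nabla\gamma^i)^*b+\tilde f^i$ and \emph{zero} terminal condition, shows via the chain rule and the martingale property that $\gamma^i(t,W_t)+h^i(t,W_t)=E[\Phi(W_T)\mid\mathcal F_t]$, concludes $\gamma^1-\gamma^2=h^2-h^1$ pointwise, and then closes the loop by an explicit $\rho$-weighted contraction estimate \eqref{eq: rho equiv norm}, \eqref{EA}, \eqref{EB} on the mild formulation of the $h^i$. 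You instead introduce a single companion function $\phi$ via the chain rule with right-hand side $-(\nabla\gamma^*\,b+\tilde f)$ and terminal datum $\Phi$, show that the martingale $N_t:=M_t-\int_0^t\nabla\phi^*\,\mathrm dW_r$ has deterministic terminal value so that $N\equiv 0$, deduce $\gamma=\phi$ pointwise by full support and continuity, observe that $\gamma=\phi$ is exactly the fixed-point identity defining a mild solution of \eqref{eq: PDE for u}, and then invoke the uniqueness part of Theorem \ref{thm: Issoglio Jing}. The trade-off: the paper's version is self-contained (it re-derives the contraction it needs), whereas yours outsources the contraction to the cited PDE theorem and is therefore shorter and arguably cleaner; it also makes transparent that BSDE uniqueness in this class is precisely PDE uniqueness in disguise. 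Two small points worth spelling out if you were to polish this: the step passing from $\int_0^t f(r,W_r,\gamma(r,W_r),\nabla\gamma(r,W_r))\,\mathrm dr$ to $A^{W,W}_t(\tilde f)$ uses Theorem \ref{thm: A(b)=int b for Lp functions} (after noting $\tilde f\in C([0,T];L^p)\cap L^\infty_{\mathrm{loc}}$, as in the proof of Theorem \ref{thm: Markovian BSDE - existence of sol}); and the identity $\gamma(t,\cdot)=\phi(t,\cdot)$ at $t=0$ needs the time-continuity of both functions rather than full support of $W_0$.
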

 
\begin{proof}
Suppose that $Y^i_t=\gamma^i(t, W_t)$, $i=1,2$ are solutions to \eqref{eq: BSDE classical form} according to Definition \ref{def: solution BSDE integral A} 
and let us denote by
\begin{equation}\label{eq: Mt i}  
M^i_t:=Y^i_t-Y^i_0+A_t^{W,Y^i}(b)+\int_0^t f\left (r, W_r, Y^i_r, \frac{\mathrm d[Y^i,W]_r}{\mathrm dr}\right) \mathrm dr, 
\end{equation}
which is a martingale by part (iv) of Definition \ref{def: solution BSDE integral A}. Moreover \begin{equation}\label{eq: l}
 (\nabla \gamma^i)^*\, b \in C([0,T]; H^{-\beta}_p) 
 \end{equation}
  by Lemma \ref{lm: continuity of pointwise product}.
By assumption on $Y^i$ we can apply Proposition \ref{pr: continuity of A YW in C} and write 
\begin{equation} \label{EAYW}
A_t^{Y^i,W}(b) = A_t^{W,W}((\nabla \gamma^i)^* \,b).
\end{equation}
 Furthermore by Corollary \ref{cor: covariation}
we have 
\begin{align} \label{T516}
&\int_0^t f\left (r, W_r, Y^i_r, \frac{\mathrm d[Y^i,W]_r}{\mathrm dr}\right) \mathrm dr \nonumber\\
&= \int_0^t f\left (r, W_r, \gamma^i(r,W_r), \nabla \gamma^i(r,W_r) \right) \mathrm dr \nonumber\\
&= \int_0^t \tilde f^i\left (r, W_r \right) \mathrm dr  ,
\end{align}
where $\tilde f^i (t, x):= f(t, x, \gamma^i(t,x), \nabla \gamma^i(t,x))$. We note that 
\begin{equation}\label{eq: f tilde}
\tilde f^i \in  L^\infty_{\text{loc}} \cap C([0,T]; L^p),
\end{equation} 
which can be proven similarly to the considerations below \eqref{EEE} in the proof of the previous existence theorem. Thus we can apply Theorem \ref{thm: A(b)=int b for Lp functions}, so 
$\eqref{T516} =  A_t^{W,W}(\tilde f^i) $ . 
By \eqref{EAYW}  and the additivity of $A^{W,W}$  we have 
\begin{equation}\label{eq: martingale Mi}
M^i_t=Y^i_t-Y^i_0+A_t^{W,W}((\nabla \gamma^i)^*\, b + \tilde f^i).
\end{equation} 
Let us consider the PDE 
\begin{equation}\label{eq: mild solution h with h(T)=0}
\left\{\begin{array}{l}
\partial_t h^i(t) +\frac12 \Delta h^i (t)= (\nabla \gamma^i)^*(t)\, b (t)+ f(t, \cdot,  \gamma^i, \nabla \gamma^i) \\
h^i(T) = 0,
\end{array} \right.
\end{equation}
which is PDE \eqref{eq: PDE for phi} with
$ (\nabla \gamma^i)^*(t)\, b(t)+ f(t, \cdot,\gamma^i, \nabla \gamma^i)= (\nabla\gamma^i)^*(t)\, b (t)+ \tilde f^i(t, \cdot) \in C([0,T]; H^{-\beta}_p)$ (by \eqref{eq: f tilde} and \eqref{eq: l})  on the right-hand side in place of $l$.   We denote by $h^i$, $i=1,2$  the corresponding (mild solution)  expression \eqref{eq: mild sol phi}, which belongs to $C([0,T]; C^{1+\alpha})$ by  Lemma \ref{lm: phi is C01}. Then $(\nabla h^i)^*$ is bounded.
By the   chain rule (Proposition \eqref{pr: chain rule in H}) we get
\begin{equation}\label{eq: chain rule applied to gamma i}
 A^{W,W}_t((\nabla \gamma^i)^*\, b + \tilde f^i) = h^i(t,W_t) -h^i(0,W_0)- \int_0^t(\nabla h^i )^*(r, W_r) \mathrm d W_r.
\end{equation}
Plugging \eqref{eq: chain rule applied to gamma i} into \eqref{eq: martingale Mi}  we get 
\begin{align*}
M^i_t=&\gamma^i(t, W_t)-\gamma^i(0, W_0)+ h^i(t, W_t)-h^i(0, W_0)\\
& -\int_0^t (\nabla h^i)^* (r, W_r) \mathrm d W_r.
\end{align*}
Subtracting $M_T^i$ from both sides and rearranging the terms  we obtain 
\begin{align}\label{eq: gamma i=acca i}
\nonumber \gamma^i(t, W_t)+h^i(t, W_t)  =& - (M^i_T -M^i_t) -\int_t^T (\nabla h^i)^* (r, W_r) \mathrm d W_r\\
&+ \gamma^i(T, W_T)+h^i(T, W_T)\\
=& \Phi(W_T)  - (\tilde M^i_T -\tilde M^i_t), \nonumber
\end{align}
where we have set 
$\tilde M^i_t: = M^i_t+ \int_0^t \nabla h^i (r, W_r) \mathrm d W_r  $ and we have used the fact that $h^i(T, W_T) = 0$ by \eqref{eq: mild solution h with h(T)=0} and that $\gamma^i(T, W_T) = \Phi(W_T)$ by item (iii) of Definition \ref{def: solution BSDE integral A}. Clearly $\tilde M^i$  is  another martingale since $(\nabla h^i)^*$ is bounded.
So the left-hand side of equality \eqref{eq: gamma i=acca i} can be represented by
\[
\gamma^i(t, W_t)  + h^i(t, W_t) = E\left[ \Phi(W_T)\vert \mathcal F_t\right].
\]
The above equality holds for $i=1,2$ and since the right-hand side is the same, we get 
 \[
\gamma^1(t, W_t) + h^1(t, W_t)  = \gamma^2(t, W_t) + h^2(t, W_t)  
 \]
 almost surely. From this we can infer that  
 \begin{equation}\label{eq: gamma + h}
\gamma^1(t, x) + h^1(t, x)  = \gamma^2(t, x) + h^2(t, x) , 
 \end{equation}
 for   every $t\in[0,T] $ and $x\in\mathbb R^d$ in the following way: suppose that we have a continuous function $\eta$ such that $\eta(t,W_t)=0$ almost surely.  Then 
\[
0=E[ \vert \eta(t,W_t)\vert ]  = \int_{[0,T]\times \mathbb R^d} \vert \eta(t,x)\vert
  p_t(x) 
\mathrm dt \mathrm dx
\] 
and since $p_t(x)>0$ we get that $\eta(t,x)=0$ almost everywhere. 
  In fact this holds everywhere because $\eta$ is continuous. Setting $\gamma^i(t):= \gamma^i(t, \cdot)$ for all $t\in[0,T]$ and $i=1,2$, it remains to show that $\gamma^1=\gamma^2$.  
 We know that  $\gamma^1(t)-\gamma^2(t) =  h^2(t)-h^1(t)$ by \eqref{eq: gamma + h}. The idea is to bound the difference $h^2-h^1 $ in the $\rho$-equivalent norm for the space ${C([0,T]; H^{1+\delta}_p)}$. To do so we work with the time reversed functions $\hat h^i (s) := h^i(T-s)$, which clearly have the same regularity as $h^i$ and also the same norm in $C([0,T]; H^{1+\delta}_p)$ and $\rho$-equivalent norm. Setting $\hat b(s): = b(T-s), \hat \gamma^i(s):= \gamma^i(T-s)$ and $\hat f (s,y,z) := f(T-s, y,z)$ we have
\begin{align*}
\hat  h^2(t)-&\hat h^1(t) 
= \int_0^t P({t-r})  \left( ( \nabla \hat \gamma^2(r)- \nabla \hat \gamma^1(r) )^* \hat b(r) \right) \mathrm dr \\
& + \int_0^t P({t-r}) \left ( \hat f(r,  \hat \gamma^2(r), \nabla\hat  \gamma^2(r) -\hat f( r, \hat \gamma^1(r), \nabla \hat \gamma^1(r) ) \right)  \mathrm dr. 
\end{align*}
Taking the $\rho$-equivalent norm (see \eqref{eq: rho equiv norm}) of the difference above, we have
\begin{align*}
&\|\hat  h^2-\hat h^1 \|_{C([0,T]; H^{1+\delta}_p)}^{(\rho)}\\
 & = \sup_{0\leq t\leq T} e^{-\rho t}\|\hat  h^2(t)-\hat h^1(t) \|_{H^{1+\delta}_p}\\
&\leq \sup_{0\leq t\leq T} e^{-\rho t} \|
\int_0^t P({t-r})  \left( ( \nabla \hat \gamma^2(r)- \nabla \hat \gamma^1(r) )^* \hat b(r) \right) \mathrm dr \|_{H^{1+\delta}_p}\\
&\phantom{eeeee} + \sup_{0\leq t\leq T} e^{-\rho t} \|
\int_0^t P({t-r})  \Bigg( \hat f(r,  \hat \gamma^2(r), \nabla\hat  \gamma^2(r) \\
&\phantom{eeeee} -\hat f( r, \hat \gamma^1(r), \nabla \hat \gamma^1(r) ) \Bigg)  \mathrm dr \|_{H^{1+\delta}_p}\\
&=: (A) + (B).
\end{align*} 
To bound the first term we use the pointwise product estimate for fixed time $r\in[0,T]$ (Lemma \ref{lm: pointwise product}), the mapping property \eqref{eq: mapping prop Pt} of the semigroup, and the definition of the $\rho$-equivalent norm \eqref{eq: rho equiv norm}. We get
\begin{align} \label{EA}
(A)&\leq c \sup_{0\leq t\leq T} \int_0^t e^{-\rho t} (t-r)^{-\frac{1+\delta+\beta}{2}}  \|\hat b(r)\|_{H^{-\beta}_q} \|\nabla \hat \gamma^2(r) -\nabla \hat \gamma^1(r)\|_{H^{\delta}_p} \mathrm dr \nonumber \\
&\leq c\|\hat b\|_{C([0,T];H^{-\beta}_q)} \sup_{0\leq t\leq T} \int_0^t e^{-\rho (t-r)} (t-r)^{-\frac{1+\delta+\beta}{2}} \cdot  \nonumber \\
&\phantom{dddddddddddddddddddddddd} e^{-\rho r}  \| \hat \gamma^2(r) - \hat \gamma^1(r)\|_{H^{1+\delta}_p} \mathrm dr \\
&\leq c \| \hat \gamma^2 - \hat \gamma^1\|_{C([0,T];H^{1+\delta}_p)}^{(\rho)} \sup_{0\leq t\leq T} \int_0^t e^{-\rho (t-r)} (t-r)^{-\frac{1+\delta+\beta}{2}} \mathrm dr \nonumber\\
& \leq c \rho^\frac{\delta+\beta-1}{2} \| \hat \gamma^2 - \hat \gamma^1\|_{C([0,T];H^{1+\delta}_p)}^{(\rho)},\nonumber
\end{align}
having used  the Gamma function and the bound  $$\int_0^te^{-\rho r} r^\alpha \mathrm dr\leq  \Gamma(\alpha+1) \rho^{-(\alpha+1)} $$ in the latter inequality, with $\alpha ={-\frac{1+\delta+\beta}{2}}$. Note that $-(\alpha+1)= \frac{\delta+\beta-1}{2}<0 $ so we have $\rho^\frac{\delta+\beta-1}{2} \to 0$ as $\rho\to \infty$.\\
To bound term (B) we do similarly but use the mapping property of the semigroup from $H^{0}_p$ to $H^{1+\delta}_p$ and the Lipschitz regularity \eqref{eq: f is lipschitz} of $\hat f$ so we get
\begin{align} \label{EB}
(B)& \leq c \sup_{0\leq t\leq T}  
\int_0^t e^{-\rho (t-r)}  (t-r)^{-\frac{1+\delta}{2}}\cdot  \nonumber\\
&\phantom{ddddddddddd} e^{-\rho r} (c \| \hat \gamma^2(r) - \hat \gamma^1(r) \|_{H^{1+\delta}_p}  + \|\nabla \hat \gamma^2(r) -\nabla \hat \gamma^1(r) \|_{H^{\delta}_p} ) \mathrm dr \\
&\leq c \rho^{\frac{\delta-1}{2}} \|\hat \gamma^2 - \hat \gamma^1\|_{C([0,T];H^{1+\delta}_p)}^{(\rho)}. \nonumber
\end{align}
Collecting the estimates \eqref{EA} and \eqref{EB}, we get 
\[
\|\gamma^1-\gamma^2\|^{(\rho)}_{C([0,T]; H^{1+\delta}_p)} \leq c(\rho^\frac{\delta-1}{2} + \rho^\frac{\delta+\beta-1}{2} ) \|\gamma^1-\gamma^2\|^{(\rho)}_{C([0,T]; H^{1+\delta}_p)},
\]
so 
\begin{equation*} 
\|\gamma^1-\gamma^2\|^{(\rho)}_{C([0,T]; H^{1+\delta}_p)} ( 1-c(\rho^\frac{\delta-1}{2} +  \rho^\frac{\delta+\beta-1}{2})) \leq 0,
\end{equation*}
where $c$ depends on $b$ and $T$ but not on $\gamma^i$ or $\rho$. We choose   $\rho$   large enough such that $1-c(\rho^\frac{\delta-1}{2} + \rho^\frac{\delta+\beta-1}{2})>0$,  
which implies $\gamma^1=\gamma^2$ and shows shows that $Y^1 = Y^2$. 
\end{proof}

Note that, if one wanted to generalize 
this framework to the non-Markovian case, one possibility would be to use functional It\^o calculus. 
This approach however, would need an analytical study of a path-dependent PDE like \eqref{eq: PDE intro}, and the current analytical tools seem insufficient at the moment.

\appendix
\section{A technical lemma and proofs of  Lemma \ref{lm: density} and Theorem \ref{thm: A(b)=int b for Lp functions}}\label{sc: appendix}
We first state and prove a technical Lemma that is used in the proofs below. 
\begin{lemma}\label{lm: projection converge uniformly in compacts}
Let $(H, \|\cdot\|)$ be a normed space and $(P_N)_N$ be a family of linear equibounded operators on $H$ such that for each $a\in H$ we have $P_Na\to a$ in $H$. Then for any compact $K\subset H$ we have
\[
\sup_{a\in K}\| P_N a- a\|\to 0,
\]
as $N\to\infty$.
\end{lemma}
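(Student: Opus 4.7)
The plan is to use the standard $\epsilon/3$ argument combining the compactness of $K$ (through a finite cover) with the pointwise convergence $P_N a \to a$ and the equiboundedness of $(P_N)$.

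First I would set $M := 1 + \sup_N \|P_N\|$, which is finite by the equiboundedness hypothesis. Given $\epsilon > 0$, compactness of $K$ yields finitely many points $a_1, \dots, a_k \in K$ such that the balls of radius $\epsilon/(3M)$ centred at the $a_i$ cover $K$. By the pointwise convergence assumption applied to each of these finitely many points, there exists $N_0$ such that
\begin{equation*}
\|P_N a_i - a_i\| < \frac{\epsilon}{3} \quad \text{for all } i = 1, \dots, k \text{ and } N \geq N_0.
\end{equation*}

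Then for arbitrary $a \in K$ and $N \geq N_0$, I would pick an index $i$ with $\|a - a_i\| < \epsilon/(3M)$ and split
\begin{equation*}
\|P_N a - a\| \leq \|P_N(a - a_i)\| + \|P_N a_i - a_i\| + \|a_i - a\|.
\end{equation*}
The first summand is bounded by $\|P_N\| \cdot \|a - a_i\| \leq M \cdot \epsilon/(3M) = \epsilon/3$ by equiboundedness, the second is $< \epsilon/3$ by the choice of $N_0$, and the third is at most $\epsilon/(3M) \leq \epsilon/3$. Taking the supremum over $a \in K$ yields $\sup_{a\in K} \|P_N a - a\| \leq \epsilon$ for all $N \geq N_0$, which is the desired conclusion.

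There is no genuine obstacle here: the only subtlety is recognising that the triangle inequality must be applied through one of the finitely many centres $a_i$ rather than directly, and that the radius of the cover must be chosen proportionally to $1/M$ so that the equiboundedness constant absorbs correctly. This is essentially a quantitative Arzelà-type argument, identical in spirit to the proof that pointwise convergence on a compact metric space together with equicontinuity yields uniform convergence.
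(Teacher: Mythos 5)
Your proof is correct and follows essentially the same route as the paper's: a finite cover of the compact set, the triangle inequality through a nearby centre, equiboundedness to control the first term, and pointwise convergence at the finitely many centres. The only cosmetic difference is that you manage the errors with an explicit $\epsilon/3$ and $N_0$, whereas the paper takes a $\limsup$ over $N$ and then lets the cover radius $\delta$ tend to zero; the two bookkeeping schemes are interchangeable.
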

\begin{proof}
Let $\delta >0$. Since $K$ is compact, we can construct a finite cover   of size $\delta$, for example $K\subseteq \cup_{i=1}^m B(a_i, \delta)$. For a given $a\in H$ there exists $j\in\{1, \ldots, m\}$ such that $a\in B(a_{j}, \delta)$. Then we write
\begin{align*}
\|P_N a -a \| &\leq \|P_N (a - a_j)\| + \|P_N a_j - a_j\| + \|a_j-a\|\\
&\leq (1+c) \|  a -  a_j \| + \max_{i=1, \ldots, m}\|P_N a_i - a_i\|\\
&\leq (1+c) \delta + \max_{i=1, \ldots, m}\|P_N a_i - a_i\|,
\end{align*}
where $c$ is the bound of the operator norms related to $P_N$.
Then $\sup_{a\in K} \|P_N a -a \| \leq (1+c) \delta + \max_{i=1, \ldots, m}\|P_N a_i - a_i\|$ and so taking the lim sup on both sides we get 
\[
\limsup_{N\to\infty}\sup_{a\in K} \|P_N a -a \| \leq (1+c) \delta  
\]
since $ \lim_{N\to\infty}\|P_N a_i - a_i\|=0$ for all $i\in\{1, \ldots, m\}$. By the fact that $\delta $ is arbitrary we get 
\[
\lim_{N\to\infty} \sup_{a\in K} \|P_N a -a\|=0
\]
as wanted.
\end{proof}

Before proving Lemma \ref{lm: density} we introduce  the  Haar wavelet functions and illustrate their use within the context of fractional Sobolev spaces $H^s_r$.
For simplicity of notation we recall only the case of the Haar wavelets on $ \mathbb R$ (see \cite{triebel10}, Section 2.2, eqn (2.93)--(2.96)) and leave to the reader the extension to $ {\mathbb R}^d$ which can be found in Section 2.3 of the same book.  
We define
\[
h_M(x) := 
\begin{cases}
1  & \text{ if } 0\leq x <\frac12,\\
-1 & \text{ if }  \frac12\leq x<1,\\
0  & \text{ if }  x\notin[0,1),
\end{cases}
\]
\[
h_F(x)  := |h_M(x)|, \quad h_{-1,m}(x) := \sqrt 2 h_F(x-m), \; m\in \mathbb Z,
 \]
and 
\[ 
h_{j,m}(x) := h_M(2^j x -m), \; j\in N_0, m\in \mathbb Z.
\]
Then the family
\begin{equation}\label{eq: Haar basis}
\left\{h_{j,m}, j\in \mathbb N_0 \cup \{-1\}, m\in \mathbb Z\right\}
\end{equation}
is an unconditional basis of $H^s_r(\mathbb R)$ for $2\leq r\leq \infty$ and $-\frac12<s<\frac1r$ by \cite[Theorem 2.9, (ii)]{triebel10}. Note that $r=\infty$ is included here  but is not included in Lemma \ref{lm: density} because of Step 1 in the proof below.  The analogous result in dimension $d\geq1$ is given in Theorem 2.21, (ii). Moreover for any $h\in H^s_r(\mathbb R) $ we have the unique representation
\[
h = \sum_{j=-1}^\infty \sum_{m\in \mathbb Z} \mu_{j,m} 2^{-j (s-\frac1r)} h_{j,m} 
\]
where 
\begin{equation}\label{eq: coefficients mu}
 \mu_{j,m}:= 2^{j (s-\frac1r+1)} \int_{\mathbb R} h(x) h_{j,m}(x) \mathrm d x,
\end{equation} 
 and the integral has to be interpreted as a dual pairing as mentioned in \cite[Theorem 2.9]{triebel10}, see also \cite[Remark 1.14]{triebel08}. 
Rewriting the same series  with a different notation 
 $\bar \mu_{j,m}:= 2^{j } \int_{\mathbb R} h(x) h_{j,m}(x)\mathrm dx $ 
we get another equivalent representation for $h$ given by
\begin{equation}\label{eq: Haar repres}
h = \sum_{j=-1}^\infty \sum_{m\in \mathbb Z} \bar \mu_{j,m}  h_{j,m}. 
\end{equation}
Defining the projector $P_N$ as
\begin{equation}\label{eq: Haar repres cut at N}
P_N h:= \sum_{j=-1}^N \sum_{m= -N}^{N} \bar \mu_{j,m}  h_{j,m}, 
\end{equation}
for $h$ of the form \eqref{eq: Haar repres}, then clearly $P_Nh \in H^{s}_{r}(\mathbb R)$ and 
\begin{equation}\label{eq: Haar convergence}
\|h-P_N h \|_{H^{s}_{r}(\mathbb R^d)} \to 0,
\end{equation}
  as $N\to \infty$.

\begin{rem}\label{rm: family PN} 
We observe that the projector $P_N$ enjoys the bound
\[
\|P_N h\|_{H^{s}_r(\mathbb R)} \leq \|h\|_{H^{s}_r(\mathbb R)}. 
\]
This can be seen as follows. We denote by $\mu(h)$ the collection of $\mu_{j,m}$ given by \eqref{eq: coefficients mu} for some $h$. Then for $2\leq r\leq \infty $ the map $h \mapsto \mu(h)$ is an isomorphism between $H^s_r$ and   $f^-_{r2}$, where the latter is a  space of sequences.   For a precise definition of $f^-_{r2}$, its norm and the statement of this isomorphism property,  see \cite{triebel10} in particular,  see Section 2.2.3, Theorem 2.9 for the 1-dimensional case and Section 2.3.2, Theorem 2.21 for the $d$-dimensional one. Moreover  the sequence of coefficients $\mu(P_N h) $ coincide with $\mu(h)$ for all $j, |m|>N$ and is zero otherwise. Thus by definition
of the norm of $f^-_{r2}$, we have  $\|\mu_{j,m}(P_N h)\|_{f^-_{r2}}\leq \|\mu_{j,m}(h)\|_{f^-_{r2}}$ and this together with the isomorphism implies  $\|P_N h\|_{H^{s}_r(\mathbb R)} \leq \|h\|_{H^{s}_r(\mathbb R)} $ as stated.
  \end{rem}

\begin{proof}[Proof of Lemma \ref{lm: density}]
We will show that the dense inclusion holds for real-valued functions, namely that $ C_c^\infty([0,T]\times \mathbb R^d)\subset C([0,T]; H^{s}_{r}(\mathbb R^d)) $. To get the full statement  it is then enough to apply this result to each component of functions in $ C([0,T]; H^{s}_{r} ) $ .\\
\emph{\textbf{Step 1.} Density of $C_c^\infty(\mathbb R^d)$ in $H^{s}_{r}(\mathbb R^d)$.} 
It is a known result that $C_c^\infty(\mathbb R^d)$ is dense in $H^{s}_{r}(\mathbb R^d)$ for all $1<r<\infty $ and $-\infty<s<\infty$. For a proof  see for example \cite[Theorem in Section 2.3.2, part (b)]{triebel78}.
\\
\emph{\textbf{Step 2.} Non-smooth approximating sequence for $l\in C([0,T]; H^{s}_{r}(\mathbb R^d) )$.}
We consider $d=1$ in the proof for simplicity of notation and explanation, but the same methodology extends to the case $d\geq1$, see for example \cite[Section 2.3.1]{triebel10}. We will use here the notation of Section 2.2.2  in the same book, which deals with the case $d=1$, in particular let $\{h_{j,m}, j\in \mathbb N_0\cup \{-1\}, m\in \mathbb Z \}$ be the Haar basis on $L^2(\mathbb R)$ defined in \eqref{eq: Haar basis}. 
Now  let $l\in C([0,T]; H^{s}_{r}(\mathbb R) )$ and let $t\in[0,T]$. 
 We recall that  $(P_N)_N$ defined by \eqref{eq: Haar repres cut at N} is a family of linear  operators  acting on $H^{s}_{r}(\mathbb R)$. 
The coefficients $\bar \mu$ of $P_Nl(t)$ are now parametrized by time, namely  $\bar \mu_{j,m}(t)= 2^{j } \int_{\mathbb R} l(t,x)  h_{j,m}(x)\mathrm dx$.  By  \eqref{eq: Haar convergence} we have that $P_N l(t)\to l(t)$ in $H^{s}_r(\mathbb R)$ as $N\to \infty$, for all $t\in [0,T]$. 
It is clear by definition of the coefficients that $t\mapsto \bar \mu_{j,m}(t)$ is continuous and each term $ t \mapsto \bar \mu_{j,m} (t) h_{j,m}$  in the finite sum belongs to
$C([0,T]; H^{s}_{r}(\mathbb R) )$
hence  $P_N l\in C([0,T]; H^{s}_{r}(\mathbb R) )$. 
We will now show 
 that $P_N l\to l$ in $  C([0,T]; H^s_r(\mathbb R) ) $, namely that 
  \begin{equation}\label{eq: lN to l}
\lim_{N\to \infty} \sup_{t\in[0,T]} \|l(t)-P_N l(t)\|_{H^{s}_{r}(\mathbb R)}  =0.
  \end{equation}
  To prove this, we want to use  Lemma \ref{lm: projection converge uniformly in compacts} with the compact $K:=\{l(t): t\in[0,T]\}\subset H^{s}_{r}(\mathbb R)$  and the projection $P_N$  defined by \eqref{eq: Haar repres cut at N}.
   The family of functions $t\mapsto P_N l(t) $ is bounded in $N$ in the space $C([0,T]; H^{s}_r(\mathbb R))$ by Remark \ref{rm: family PN}. Since $\{l(t), t\in[0,T]\}$ is a compact set in $H^s_r$ then we can apply Lemma \ref{lm: projection converge uniformly in compacts} and we get \eqref{eq: lN to l}. \\
\emph{\textbf{Step 3.} Smoothing of non-smooth approximating sequence.}
The last step consists in showing that for any $l_N (t) := P_Nl(t)$ from Step 2 and for any $\varepsilon>0$, we can find an element $t\mapsto \tilde l_N (t)$ which is an element of $   C_c^\infty([0,T]\times\mathbb R)$ and such that $\sup_{t\in[0,T]}\|l_N(t)-\tilde l_N(t)\|_{H^s_{r}(\mathbb R)} <\varepsilon$. Then this would conclude the argument and show the density of $C_c^\infty([0,T]\times\mathbb R)$ in $C([0,T]; H^{s}_{r}(\mathbb R))$.

To find $\tilde l_N(\cdot)$  we observe that  $l_N(t)$ is a finite sum of terms of the type $\mu_{j,m}(t) h_{j,m}$, where the $\mu$s are continuous in time and $h_{j,m}$ is an element of the Haar basis. For each of this terms using Step 1 we can find $\tilde h_{j,m}\in C_c^\infty(\mathbb R)$ such that 
\[
\|h_{j,m}- \tilde h_{j,m}\|_{H^s_r({\mathbb R})} <\frac{\varepsilon}{\max_{t\in[0,T]} \sum_{j,m}|\mu_{j,m}(t)|},
\] 
where the sum appearing in the denominator is over the finite set of indices $j\in \{ -1, 0, \ldots, N\}$ and $m\in \{ -N,\ldots, 0, \ldots, N\}$. Then we set
\[
\tilde l_N(t): = \sum_{j,m} \mu_{j,m}(t) \tilde h_{j,m},
\]
where again the sum over $j,m$ is a finite sum.
Then for any $t\in[0,T]$ we have
\begin{align*}  
\|\tilde l_N(t) - l_N(t)\|_{H^{s}_{r}(\mathbb R)} & = \| \sum_{j,m} \mu_{j,m}(t)(h_{j,m}- \tilde h_{j,m} )\|_{H^{s}_r(\mathbb R)} \\
&  \leq \max_{t\in[0,T]}\sum_{j,m}|\mu_{j,m}(t)| \| (h_{j,m}- \tilde h_{j,m} )\|_{H^{s}_{r}(\mathbb R)} \\
&<\varepsilon. 
\end{align*}
\end{proof}
Below we given the proof of Theorem \ref{thm: A(b)=int b for Lp functions}.

\begin{proof}[Proof of Theorem \ref{thm: A(b)=int b for Lp functions}]
The proof is split in two steps. In Step 1 we show that \eqref{eq: A(b)=int b for Lp functions} holds for  $g\in C([0,T]; H^{s}_r)\cap L^\infty_{\text{loc}}([0,T]\times \mathbb R^d; \mathbb R^d)$ and bounded  
 functions with compact support. In Step 2 we treat the general case.
 
The proof is written for real-valued functions, and can be applied component by component.\\
{\bf Step 1.} \emph{$g$ bounded  
function with compact support.}\\
We consider a sequence $\phi_N:\mathbb R^d \to \mathbb R $ of mollifiers converging to the Dirac measure and for each $N$ we define an  operator $P_N$ acting on $h  \in  H^s_r(\mathbb R^d) $ by
$$
P_N h :=  (h * \phi_N).
$$
It is easy to show that for every $h\in H^s_r(\mathbb R^d)$ then  $P_N$ and $A^{-s/2}:= (I-\frac12\Delta)^{-s/2}$ commute, that is 
\begin{equation}\label{eq: commutation}
P_N(A^{-s/2} h) = A^{-s/2}(P_N h) .
 \end{equation}
Indeed by the definition of the norm in the $H^s_r(\mathbb R^d)$-spaces, we have $A^{-s/2}h\in L^r(\mathbb R^d)$. Denoting by  $\mathcal F$ the Fourier transform in $L^r(\mathbb R^d)$ we have
\begin{align*}
\mathcal F \left( A^{-s/2} (P_N h) \right)(\xi) 
&= \left(1+\frac{\xi^2}{2}\right)^{-s/2} \mathcal F (P_N h )(\xi)  \\
&=   \left(1+\frac{\xi^2}{2}\right)^{-s/2} \mathcal F (h )(\xi) \mathcal F (\phi_N)(\xi)  \\
&= \mathcal F\left(A^{-s/2} h \right) (\xi) \mathcal F (\phi_N)(\xi)   .
\end{align*}
 Taking the inverse Fourier transform on both sides we obtain the commutation property as stated in \eqref{eq: commutation}.  Now  it easily follows that 
\begin{equation}\label{eq: PN converges} 
 P_N h \to  h \text{ in } H^s_r(\mathbb R^d), \text{ as } N\to \infty
 \end{equation}
for every $ h \in H^s_r(\mathbb R^d)$,
  using the definition of the norm in the fractional Sobolev spaces, the property that $P_N f \to f$ in $L^r(\mathbb R^d)$ for $f$ in the latter space (in particular for $f= A^{-s/2}h$) and the commutation property \eqref{eq: commutation}. Moreover $P_N $ is a contraction in the same spaces, namely 
 \begin{equation}\label{eq: PN is a contraction}
 \|P_N h\|_{H^s_r(\mathbb R^d)} \leq  \| h\|_{H^s_r(\mathbb R^d)}.  
 \end{equation}
This can be seen by observing that $$
 \|P_N h\|_{H^s_r(\mathbb R^d)} =  \|A^{-s/2}( P_N h)\|_{H^0_r(\mathbb R^d)} = \| P_N (A^{-s/2}h)\|_{H^0_r(\mathbb R^d)},$$ 
where we have used \eqref{eq: commutation}, and the latter is bounded by $  \|A^{-s/2} h\|_{H^0_r(\mathbb R^d)} =   \| h\|_{H^s_r(\mathbb R^d)} $ because $P_N$ is a contraction operator in $H^0_r(\mathbb R^d)=L^r(\mathbb R^d)$.  
Property \eqref{eq: PN is a contraction} is applied to $h=g(t, \cdot)$ for all $t\in[0,T]$ to show that the function $t\mapsto P_Ng(t,\cdot)$ is continuous from $[0,T]$ to $H^s_r(\mathbb R^d)$. Indeed for any sequence $t_k\to t$ we have
\begin{align*}
\| P_Ng(t_k, \cdot)-P_Ng(t, \cdot)\|_{H^s_r(\mathbb R^d)}
& = \| P_N (g(t_k, \cdot)-g(t, \cdot))\|_{H^s_r(\mathbb R^d)}\\
& \leq \| g(t_k, \cdot)-g(t, \cdot)\|_{H^s_r(\mathbb R^d)},
\end{align*}
which goes to zero by  assumption on $g$. 
To show that 
\begin{equation}\label{eq: PN g to g}
P_Ng\to g
\end{equation}
 in $C([0,T]; H^s_r(\mathbb R^d))$, we use Lemma \ref{lm: projection converge uniformly in compacts} with 
$H= H^s_r(\mathbb R^d)$. 
 We can do so since the family of operators $(P_N)_N$ is linear and equibounded in $H^s_r(\mathbb R^d)$ by \eqref{eq: PN is a contraction}, and it fulfils
\eqref{eq: PN converges}.
 Thus defining the compact $K$ in $H^s_r(\mathbb R^d)$ by $K:= \{ g(t): t\in[0,T]\}$ we have 
\[
\sup_{a\in K} \|P_N a - a\|_{ H^s_r(\mathbb R^d)} = 
\sup_{0\leq t\leq T} \| P_Ng(t, \cdot)-g(t, \cdot)\|_{H^s_r(\mathbb R^d)} 
\]
and by Lemma \ref{lm: projection converge uniformly in compacts} the quantity above converges to 0 as $N\to\infty$.
At this point we observe that $P_N g \in C_c([0,T]\times \mathbb R^d; \mathbb R^d)$ because both $g$ and $\phi_N$ have compact support. Therefore $A^{W,W}(P_N g)$ is well-defined and \eqref{eq: A(b)=int b for Lp functions} holds for $g$ replaced by $P_N g $ thanks to \eqref{eq: operator AWW}. Moreover by \eqref{eq: PN g to g} we can apply Remark \ref{rm: continuity of A WW in H}, part 2.\ and get
\begin{equation}\label{eq: xx}
\lim_{N\to\infty} A^{W,W}(P_Ng) = A^{W,W}(g) \text{ in } \mathcal C.
\end{equation}
Finally we can see that  
\begin{equation}\label{eq: xxx}
\int_0^\cdot P_Ng(s, W_s)\mathrm ds \to  \int_0^\cdot  g(s, W_s)\mathrm ds 
\end{equation}
\emph{u.c.p} when $N\to \infty$. Indeed 
\begin{align} \label{E24}
E&\left[\sup_{0\leq t \leq T}|\int_0^t (P_Ng-g)(s, W_s)\mathrm ds|\right] \nonumber\\
&\leq E\left[\sup_{0\leq t \leq T}\int_0^t |P_Ng-g|(s, W_s)\mathrm ds\right]\\
&\leq \int_0^T\int_{\mathbb R^d} |P_Ng-g|(s, y)p_s(y)\mathrm dy\mathrm ds,\nonumber
\end{align}
where $p_s(y)$ is the mean-zero Gaussian density in $\mathbb R^d$ with variance $s$.
Now for almost all $(s,x)\in[0,T]\times \mathbb R^d$ we have $|P_Ng(s,x)|\leq \|g\|_{L^\infty([0,T]\times\mathbb R^d)}\leq C$, because  $g$ is bounded by assumption. This, together with the fact that 
\[
\int_{[0,T]\times \mathbb R^d} p_s(y) \mathrm ds \mathrm dy = T
\]
implies that \eqref{E24} is bounded by $2CT$.
Moreover for almost all $(s,x)\in[0,T]\times \mathbb R^d$ we also have 
\[
(P_Ng-g)(s,x) \to 0.
\]
By Lebesgue dominated convergence theorem the RHS of \eqref{E24} converges to 0. This implies \eqref{eq: xxx} and with \eqref{eq: xx} we conclude. \\
{\bf Step 2.} \emph{General case $g\in C([0,T]; H^{s}_r(\mathbb R^d))\cap L^\infty_{\text{loc}}([0,T]\times \mathbb R^d)$.}\\
Let us define $\tau_M:= \inf\{t\geq 0 \text{ such that } |W_t|>M\}$. Clearly $\tau_M\to \infty$ a.s.\ as $M\to \infty$. Moreover we define a family of smooth functions 
\[
\chi_M(x) =\begin{cases}
1 & \text{ if } |x|\leq M\\
0 & \text{ if } |x|\geq M+1
\end{cases}
\]
and with $0\leq\chi_M(x)\leq 1$. Then we set $g_M(s,x) := g(s,x)\chi_M(x)$. It is clear that $g_M(s, W_s) = g(s, W_s)$ for all $\omega$ and for all $s\leq t\wedge \tau_M$ for any arbitrary $t$, hence
\begin{equation}\label{eq: approx1}
\int_0^{t\wedge \tau_M} g(s, W_s) \mathrm ds = \int_0^{t\wedge \tau_M} g_M(s, W_s) \mathrm ds.
\end{equation}
On the other hand we know that $g_M$ is bounded and has compact support by definition, and that $g_M\in C([0,T]; H^{s}_r(\mathbb R^d))$ because $g$ is in the same space and $\chi_M$ is smooth (using the pointwise multipliers property, see \cite[Section 2.2.2]{triebel08}). 
So Step 1 applies to $g_M$ 
\begin{equation*}
A^{W,W} (g_M) = \int_0^{\cdot} g_M(s, W_s) \mathrm ds.
\end{equation*}
and in particular it holds for the time $t\wedge \tau_M$, that is 
\begin{equation}\label{eq: approx2}
A^{W,W}_{t\wedge \tau_M} (g_M) = \int_0^{t\wedge \tau_M} g_M(s, W_s) \mathrm ds.
\end{equation}
Now we want to show that 
\begin{equation}\label{eq: Aww aim}
A^{W,W}_{\cdot\wedge \tau_M} (g_M) = A^{W,W}_{\cdot\wedge \tau_M} (g).
\end{equation}
 To this aim, let us consider an approximating sequence $(g^n)_n$ of $g$ in $ C_c ([0,T]\times\mathbb R^d)$, which exists due to Lemma \ref{lm: density}. Then we set $g_M^n:=g^n\chi_M$ for each $n$, and this is
 an approximating sequence for $g_M$ in $C([0,T]; H^s_r(\mathbb R^d))$.  Indeed  the linear map
 $\phi\mapsto  \phi\chi_M$   is continuous in $C([0,T];H^{s}_r(\mathbb R^d))$ by \cite[equation (2.50)]{triebel08},   namely there exists a constant $c(M)$ only dependent on $\chi_M$ such that 
 \[
\|\phi\chi_M\|_{H^s_r(\mathbb R^d)} \leq c(M) \|\phi\|_{H^s_r(\mathbb R^d)}. 
\]
Then
\begin{align*}
\|g^n_M-g_M\|_{C([0,T]; H^s_r(\mathbb R^d))} 
& = \|g^n\chi_M-g\chi_M\|_{C([0,T]; H^s_r(\mathbb R^d))}\\
& = \sup_{0\leq t\leq T} \|(g^n(t, \cdot)-g(t, \cdot))\chi_M\|_{H^s_r(\mathbb R^d)}\\
& \leq  c(M) \sup_{0\leq t\leq T} \|g^n(t, \cdot)-g(t, \cdot)\|_{H^s_r(\mathbb R^d)}\\
&= c(M) \|g^n-g\|_{C([0,T]; H^s_r(\mathbb R^d))}, 
\end{align*}
and since $g^n$ converges to $ g$ in $C([0,T]; H^s_r(\mathbb R^d))$ then so does $g^n_M$ to $g_M$. \\
For each $n$  we have
\begin{equation}\label{eq:  Aww truncated}
 A^{W,W}_{\cdot\wedge \tau_M} (g^n_M) = A^{W,W}_{\cdot\wedge \tau_M} (g^n) 
\end{equation}
because both sides are defined explicitly and the two functions coincide  before $\tau_M$. We note that $ A^{W,W}_{\cdot } (g^n_M) $ (resp. $ A^{W,W}_{\cdot   } (g^n)$) converges u.c.p.\ to  $ A^{W,W}_{\cdot } (g_M) $ (resp. $ A^{W,W}_{\cdot   } (g)$)  as $n\to \infty$. The truncated processes, which are the  left-hand side and the right hand-side of \eqref{eq:  Aww truncated} also converge u.c.p., hence we get \eqref{eq: Aww aim}.
This, together with \eqref{eq: approx1} and \eqref{eq: approx2} gives 
\begin{equation}\label{eq: approx3}
  \int_0^{\cdot\wedge \tau_M} g(s, W_s) \mathrm ds = A^{W,W}_{\cdot\wedge \tau_M} (g). 
\end{equation}
For almost all  $\omega$ there exists $n_0(\omega)$ such that for all $M>n_0(\omega)$ we have $ \tau_M(\omega)\geq T$, then taking the limit as $M\to\infty$  of \eqref{eq: approx3} we conclude.
\end{proof}

\section*{Acknowledgments}

The authors are grateful to the Referee for the careful reading
and the stimulating comments.
The contribution of the first named author  was partially supported by the LMS grant 41501 ``research in pairs''.
The work of the second named author
was supported by a public grant as part of the
{\it Investissement d'avenir project, reference ANR-11-LABX-0056-LMH,
  LabEx LMH,}
in a joint call with Gaspard Monge Program for optimization, operations research and their interactions with data sciences.
That author  was also partially supported by the grant 346300 for IMPAN from the Simons Foundation and the matching 2015-2019 Polish MNiSW fund.

\bibliographystyle{plain}
\bibliography{biblio-BSDE}

\end{document}